\documentclass[hidelinks,journal]{IEEEtran}

\IEEEoverridecommandlockouts
\overrideIEEEmargins

\addtolength{\topmargin}{.15in}

\usepackage{etex}
\usepackage{wrapfig}
\usepackage{multicol}



\usepackage{comment}
\usepackage{siunitx}
\usepackage{relsize}
\usepackage{ifthen}

\usepackage[caption=false]{subfig}





\usepackage{graphics} 
\usepackage{rotating}
\usepackage{color}
\usepackage{enumerate}
\usepackage[T1]{fontenc}
\usepackage{psfrag}
\usepackage{epsfig} 
\usepackage{booktabs}
\usepackage{graphicx,url}
\usepackage{multirow}
\usepackage{array}
\usepackage{latexsym}
\usepackage{amsfonts}
\usepackage{amsmath}
\usepackage{amssymb}
\usepackage{xstring}
\usepackage{multirow}
\usepackage{xcolor}
\usepackage{prettyref}
\usepackage{flexisym}
\usepackage{bigdelim}
\usepackage{breqn} 
\usepackage{listings}

\usepackage{enumitem}
\usepackage{xspace}
\usepackage{bm}
\graphicspath{{./figures/}}
\usepackage{tikz}
\usetikzlibrary{matrix,calc}

\usepackage{lineno}


%

\usepackage{mdwlist}

\makecompactlist{itemize}{stditemize}




\newrefformat{prob}{Problem\,\ref{#1}}
\newrefformat{def}{Definition\,\ref{#1}}
\newrefformat{sec}{Section\,\ref{#1}}
\newrefformat{sub}{Section\,\ref{#1}}
\newrefformat{prop}{Proposition\,\ref{#1}}
\newrefformat{app}{Appendix\,\ref{#1}}
\newrefformat{alg}{Algorithm\,\ref{#1}}
\newrefformat{cor}{Corollary\,\ref{#1}}
\newrefformat{thm}{Theorem\,\ref{#1}}
\newrefformat{lem}{Lemma\,\ref{#1}}
\newrefformat{fig}{Fig.\,\ref{#1}}
\newrefformat{tab}{Table\,\ref{#1}}




\newcommand{\bdmath}{\begin{dmath}}
\newcommand{\edmath}{\end{dmath}}
\newcommand{\beq}{\begin{equation}}
\newcommand{\eeq}{\end{equation}}
\newcommand{\bdm}{\begin{displaymath}}
\newcommand{\edm}{\end{displaymath}}
\newcommand{\bea}{\begin{eqnarray}}
\newcommand{\eea}{\end{eqnarray}}
\newcommand{\beal}{\beq \begin{array}{lll}}
\newcommand{\eeal}{\end{array} \eeq}
\newcommand{\beas}{\begin{eqnarray*}}
\newcommand{\eeas}{\end{eqnarray*}}
\newcommand{\ba}{\begin{array}}
\newcommand{\ea}{\end{array}}
\newcommand{\bit}{\begin{itemize}}
\newcommand{\eit}{\end{itemize}}
\newcommand{\ben}{\begin{enumerate}}
\newcommand{\een}{\end{enumerate}}

\newcommand{\algS}{\widehat{\calS}}

\newcommand{\calB}{{\cal B}}

\newcommand{\calG}{{\cal G}}
\newcommand{\calH}{{\cal H}}

\newcommand{\calS}{{\cal S}}

\newcommand{\calV}{{\cal V}}






\definecolor{myblue}{RGB}{65 105 225}

\newcommand{\hide}[1]{}

\newcommand{\hiddenText}{{\color{gray} hidden text.}}
\newcommand{\hideWithText}[1]{\hiddenText}



\DeclareMathOperator*{\argmin}{arg\,min}


\newcommand{\tran}{^{\mathsf{T}}}

\newcommand{\diag}[1]{\mathrm{diag}\left(#1\right)}
\newcommand{\trace}[1]{\mathrm{tr}\left(#1\right)}

\newcommand{\inv}{^{-1}}

\newcommand{\until}[1]{\{1, 2\dots, #1\}}

\newcommand{\eye}{{\mathbf I}}

\newcommand{\Real}[1]{ { {\mathbb R}^{#1} } }

\newcommand{\att}{^{(t)}}
\newcommand{\at}[1]{^{(#1)}}










%
%





\newcommand{\omitted}[1]{}

\newcommand{\LQG}{LQG\xspace}
\newcommand{\myParagraph}[1]{{\bf #1.}\xspace}

\newcommand{\logdet}{\log\det}
\newcommand{\myFigure}[1]{Fig.~\ref{#1}}
\newcommand{\nrRobots}{n}
\newcommand{\sensorBudget}{b}
\newcommand{\sensorCost}{c}
\newcommand{\optS}{\calS^\star}
\newcommand{\algSone}{\algS_1}
\newcommand{\algStwo}{\algS_2}
\newcommand{\algStwoAlpha}{\algS_{2,\alpha}}
\newcommand{\algSAlpha}{\algS_{\alpha}}
\newcommand{\optBudget}{\sensorBudget^\star}

\newcommand{\trandom}{{\tt random$^*$}\xspace}
\newcommand{\tlogdet}{{\tt logdet}\xspace}
\newcommand{\tslqg}{{\tt s-LQG}\xspace}
\newcommand{\toptimal}{{\tt optimal}\xspace}
\newcommand{\tallSensors}{{\tt allSensors}\xspace}

\newcommand{\of}[2]{_{#1}(#2)}
\newcommand{\initialCovariance}{\Sigma\att{1}{1}}
\newcommand{\allT}{t=1,2,\ldots,T}


\newcommand{\validated}[2]{{#2}}

\newcommand{\subjectTo}{\rm{s.t.}}

\newcommand{\elem}{{{v}}}

\newcommand{\optUoneT}{u^\star_{1:T}} 
\newcommand{\hatUoneT}{\widehat{u}_{1:T}} 
\newcommand{\temp}{{{\beta}}}


\usepackage{float}

\usepackage{hyperref}
\usepackage{graphicx}
\usepackage{epstopdf}
\usepackage{epsfig}
\usepackage{pgfplotstable}
\usepackage{pgfplots}

\newcommand{\myparagraph}[1]{{\bf #1.}}

\graphicspath{{./figures/},{../figures/},{../},{./code/}}

\usepackage{xr}
\usepackage{cite}
\usepackage{amsthm}
\newtheoremstyle{mystyle}
  {}
  {}
  {\itshape}
  {}
  {\bfseries}
  {.}
  { }
  {\thmname{#1}\thmnumber{ #2}\thmnote{ (#3)}}
\theoremstyle{mystyle}

\floatname{algorithm}{Algorithm}

\newtheorem{mydef}{Definition}

\newtheorem{mytheorem}{Theorem}
\newtheorem{mylemma}{Lemma}
\newtheorem{myremark}{Remark}

\newtheorem{myproblem}{Problem}

\renewcommand{\at}[1]{_{#1}}
\renewcommand{\att}[2]{_{#1|#2}}
\renewcommand{\trace}[1]{\textrm{tr}\left( #1\right)}




\usepackage{algorithm}
\usepackage{algcompatible}

\title{\huge{\LQG Control and Sensing Co-Design}}

\author{
Vasileios Tzoumas,$^{1}$~\IEEEmembership{Member,~IEEE,} Luca Carlone,$^{1}$~\IEEEmembership{Senior Member,~IEEE,} George J.~Pappas,$^{2}$~\IEEEmembership{Fellow,~IEEE,} \\Ali Jadbabaie,$^{1}$~\IEEEmembership{Fellow,~IEEE}
\thanks{$^{1}$The authors are with the Laboratory for Information \& Decision Systems, Massachusetts Institute of Technology, Cambridge, MA 02139, USA (email: {\fontsize{8}{8}\selectfont\ttfamily\upshape \{vtzoumas, lcarlone, jadbabai\}@mit.edu}).}
\thanks{$^{2}$The author is with the Department of Electrical and Systems Engineering, University of Pennsylvania, Philadelphia, PA 19104, USA (email: {\fontsize{8}{8}\selectfont\ttfamily\upshape pappasg@seas.upenn.edu}).}
\thanks{This work was supported  
by the AFOSR Complex Networks Program, ARL DCIST CRA W911NF-17-2-0181, and ONR RAIDER N00014-18-1-2828.
}
}

\let\OLDthebibliography\thebibliography
\renewcommand\thebibliography[1]{
  \OLDthebibliography{#1}
  \setlength{\parskip}{0pt}
  \setlength{\itemsep}{0pt}
}

\usepackage{tikz}
\begin{document}

\maketitle

%

\begin{abstract}
We investigate a Linear Quadratic Gaussian (LQG) control and sensing co-design problem, where one jointly designs sensing and control policies. 
We~focus on the realistic case where the sensing design 
is selected among a finite set of available sensors, where each sensor is  associated with a different cost {(e.g., power consumption)}.  We consider two dual problem instances: \emph{sensing-constrained \LQG control}, 
where one maximizes control performance subject to a sensor cost budget, 
and \emph{minimum-sensing \LQG control}, where one minimizes sensor cost subject to performance constraints. 
{We prove no polynomial time algorithm guarantees across all problem instances a {constant} approximation factor from the optimal.}
Nonetheless, we~present the first polynomial time {algorithms with {{per-instance}} 
 suboptimality guarantees.}  
To this end,~we {leverage} a separation principle, that partially decouples  the design of sensing and control.  
Then, we frame \LQG co-design as the optimization of approximately supermodular set functions;~we develop novel algorithms to solve the problems;~and we prove original results on the performance of the algorithms, and establish connections between their suboptimality and control-theoretic quantities.  
We conclude the paper by discussing two applications, namely,
\emph{sensing-constrained formation control} and 
\mbox{{\emph{resource-constrained robot navigation}.}
}
\end{abstract}

\begin{tikzpicture}[overlay, remember picture]
\path (current page.north east) ++(-4,-0.2) node[below left] {
	This paper has been accepted for publication in the IEEE Transactions of Automatic Control.
};
\end{tikzpicture}
\begin{tikzpicture}[overlay, remember picture]
\path (current page.north east) ++(-4.8,-0.6) node[below left] {
	Please cite the paper as:
	V.~Tzoumas, L.~Carlone, George J.~Pappas, A.~Jadbabaie
};
\end{tikzpicture}
\begin{tikzpicture}[overlay, remember picture]
\path (current page.north east) ++(-3.9,-1) node[below left] {
	``LQG Control and Sensing Co-Design", IEEE Transactions of Automatic Control (TAC), 2020.
};
\end{tikzpicture}


\section{Introduction}\label{sec:Intro}

Traditional approaches to systems control  assume the choice of
sensors fixed~\cite{bertsekas2005dynamic}.
The  sensors usually result from a preliminary design phase in which an 
expert selects a suitable sensor suite that accommodates estimation requirements, and system constraints (e.g., power consumption).
{However, the control applications of the Internet of Things (IoT) and Battlefield Things (IoBT)~\cite{abdelzaher2018toward},
pose serious limitations to the applicability of this traditional paradigm.  Now, systems are not designed from scratch; instead, existing, standardized systems come together, along with their sensors, to form heterogeneous teams (such as robot teams), 
 tasked with various control goals: from collaborative object manipulation to formation control~\cite{michael2011cooperative,prorok2017impact}.  In such heterogeneous networked systems, where new nodes are continuously 
added and removed from the network, sensor redundancies 
are created, depending on the task at hand. 
At the same time, power, bandwidth, and/or computation constraints limit which sensors can be active~\cite{gupta2006stochastic,carlone2017attention,iwaki2018lqg}   Therefore, to optimize the network's operability and prolong its operation, one needs to decide which sensors are important for the task, and activate only these~\cite{iwaki2018lqg}.
Evidently, in large-scale networks
a manual activation policy is not scalable.}
Thus, ones needs to develop 
automated approaches.
Motivated by this need,
we consider the co-design of \LQG control and sensor selection subject to sensor activation constraints.  {Particularly, we assume that  the sensor constraints are captured by a prescribed budget (e.g., available battery power), and that each sensor is associated with \mbox{an activation cost (e.g., power consumption).}}

\myParagraph{Related work in control} {Traditionally, the control literature  has focused on co-designing control, estimation, actuation (i.e., actuator selection), and sensing (i.e., sensor selection)~\cite{bertsekas2005dynamic,Nair07ieee-rateConstrainedControl,Baillieul07ieee-networkedControl,Elia01tac-limitedInfoControl,Nair04sicon-rateConstrainedControl,Tatikonda04tac-limitedCommControl,Borkar97-limitedCommControl,LeNy14tac-limitedCommControl,
lin2017sparse,lin2011augmented,lin2013design,zare2018proximal,liu2017decentralized,
tanaka2015sdp,tanaka2018lqg,
joshi2009sensor,gupta2006stochastic,leny2011kalman,
jawaid2015submodularity,zhao2016scheduling,chamon2017mean,
tzoumas2015sensor,carlone2017attention,
clark2012leader,clark2017input,liu2017minimal,pequito2015framework,summers2016submodularity,
tzoumas2015minimal,summers2017performance,summers2017performance2,
nozari2017time,taha2017time,clark2016submodularity,
chakrabortty2011optimal,moreno2015actuator,lim1992method,
iwaki2018lqg}.  However, the focus so far has mostly been different from the co-design problem we consider in this paper}: 

\textit{a)}{~\cite{bertsekas2005dynamic,Nair07ieee-rateConstrainedControl,Baillieul07ieee-networkedControl,Elia01tac-limitedInfoControl,Nair04sicon-rateConstrainedControl,Tatikonda04tac-limitedCommControl,Borkar97-limitedCommControl,LeNy14tac-limitedCommControl} assume all sensors given and active (instead of choosing a few sensors to activate).}  They focus on the co-design of control and estimation over band-limited communication channels, and 
investigate trade-offs between communication constraints (e.g., quantization), and control performance (e.g., stability). In more detail, they provide results on the impact of quantization~\cite{Elia01tac-limitedInfoControl}, and of finite data 
rates~\cite{Nair04sicon-rateConstrainedControl,Tatikonda04tac-limitedCommControl}, as well as, on separation principles for \LQG design with 
communication constraints~\cite{Borkar97-limitedCommControl}.  Recent works also focus on privacy constraints~\cite{LeNy14tac-limitedCommControl}.
For a comprehensive review on \LQG control and estimation, \mbox{we refer to~\cite{bertsekas2005dynamic,Nair07ieee-rateConstrainedControl,Baillieul07ieee-networkedControl}.} 

{\textit{b)}~\cite{lin2017sparse,lin2011augmented,lin2013design,zare2018proximal,liu2017decentralized,tanaka2015sdp,tanaka2018lqg} extend the focus of the above works, by focusing on the co-design of control, estimation, and sensing.  Yet, the choice of each sensor can be arbitrary (instead, in our framework, a few sensors are activated from a given finite set of available ones).  For example,~\cite{lin2017sparse,lin2011augmented,lin2013design,zelazo2010graph,zare2018proximal,liu2017decentralized} propose the optimization of steady state \LQG costs, subject to sparsity constraints on the sensor matrices and/or on the feedback control and estimation gains. Finally,~\cite{tanaka2015sdp,tanaka2018lqg} augment the  \LQG cost with an information-theoretic regularizer, and design the sensors matrices
using semi-definite programming.  
}

\textit{c)}~\cite{joshi2009sensor,gupta2006stochastic,leny2011kalman,jawaid2015submodularity,
zhao2016scheduling,chamon2017mean,tzoumas2015sensor,
carlone2017attention,clark2012leader,clark2017input,liu2017minimal,pequito2015framework,summers2016submodularity,tzoumas2015minimal,summers2017performance,summers2017performance2,nozari2017time,taha2017time,clark2016submodularity,
chakrabortty2011optimal,moreno2015actuator,lim1992method,
iwaki2018lqg} focus on sensor selection, but they do not consider control aspects (with the exception of~\cite{moreno2015actuator,lim1992method,iwaki2018lqg}, which we discuss below).  Specifically,~\cite{joshi2009sensor} studies sensor placement to optimize maximum likelihood estimation over static parameters, whereas \cite{gupta2006stochastic,leny2011kalman,jawaid2015submodularity,zhao2016scheduling,chamon2017mean,tzoumas2015sensor,carlone2017attention} focus on optimizing Kalman filtering and batch estimation accuracy over non-static parameters.
\cite{clark2012leader,clark2017input,pequito2015framework,summers2016submodularity,tzoumas2015minimal,summers2017performance,summers2017performance2,nozari2017time,taha2017time} present sensor and actuator selection algorithms to optimize the average observability and controllability of systems;~\cite{liu2017minimal} focuses on actuator placement for stability in uncertain systems.  For additional relevant applications, we refer to~\cite{clark2016submodularity}. {\cite[Chapter 6.1.3]{zelazo2010graph} focuses on selecting a sensor for each edge of a consensus-type system for $\calH_2$ optimization subject to sensor cost constraints, and sensor noise considerations (instead, we consider general systems). \cite{chakrabortty2011optimal} selects the location of a phasor measurement unit (PMU) on a single edge of an electrical network to minimize estimation error (each placement happens independently of the rest). \cite{moreno2015actuator,lim1992method} study sensor placement to optimize a steady state \LQG cost; although the latter case is similar to our framework (we optimize a finite horizon \LQG cost, instead of a steady state), the authors focus only on a small-scale system with a few sensors, where a brute-force selection is viable, and no scalable algorithms are proposed (instead, our focus is on scalable approximation algorithms).
Finally,~\cite{iwaki2018lqg} studies an \LQG control and scheduling co-design problem, where \textit{decoupled} systems share a wireless sensor network, while power consumption constraints must be satisfied. {Instead, we consider coupled systems, a framework that makes our co-design problem inapproximable in polynomial time, in contrast to~\cite{iwaki2018lqg}'s, which is optimally solved in polynomial time.}}

\myParagraph{Related work on set function optimization} 
{In this paper, a few sensors must be activated among a set of available ones.  This is a combinatorial problem, and we prove it inapproximable: across all problem instances, no polynomial time algorithm can guarantee a constant approximation factor from the optimal. Thus, to provide efficient algorithms with per-instance suboptimality bounds instead, we resort to tools from \textit{combinatorial} optimization, which has been a successful paradigm on this front~\cite{clark2016submodularity,das2011submodular,wang2016approximation,sviridenko2013optimal,sviridenko2017optimal,clark2016submodularity}.}
Specifically, the literature on combinatorial optimization includes investigation into (i)~\textit{supermodular} optimization subject to \textit{cardinality} constraints {(where only a prescribed number of sensors can be active)}~\cite{nemhauser78analysis,wolsey1982analysis}; (ii)~\textit{supermodular} optimization subject to \textit{cost} constraints~\cite{khuller1999budgeted,sviridenko2004,krause2005note} {(where only sensor combinations that meet a prescribed budget can be active ---each sensor has a potentially different activation cost)};
and~(iii)~{\textit{approximately} \textit{supermodular} optimization subject to \textit{cardinality} constraints~\cite{das2011submodular,sviridenko2013optimal,sviridenko2017optimal,wang2016approximation}.}
The literature does not cover
\textit{approximately} \textit{submodular} optimization subject to \textit{cost} constraints, which is the setup of interest in this paper; hence we herein develop algorithms and novel suboptimality bounds for this case.\footnote{{The transition from cardinality to cost constraints, in terms of providing efficient algorithms with provable suboptimality bounds, is non-trivial, as it is observed by comparing the widely different proof techniques in~\cite{tzoumas2018sensing}, for the cardinality case, versus those considered in this paper, for the cost case.}}

\myParagraph{Contributions to control theory}
We address {an} \emph{\LQG control and sensing co-design} problem.  The problem 
extends \LQG control to the case where, besides designing an optimal controller and estimator,  {one has to decide which sensors to activate, 
due to sensor cost constraints and a limited budget}.
That is, the sensor choice is restricted to a finite selection from the available sensors, rather than being arbitrary ({for arbitrary sensing design, see~\cite{tanaka2015sdp, lin2017sparse,lin2011augmented,lin2013design,zare2018proximal}}).  And each sensor has a cost that captures the penalty incurred for using the sensor. 
{Since different sensors (e.g., lidars, radars, cameras, lasers) have different power consumption, bandwidth utilization, and/or monetary value, we allow each sensor to have a different cost.}
We formulate two dual instances of the {\LQG co-design} problem.
The first, \emph{sensing-constrained \LQG control}, 
involves the joint design of control and sensing 
to minimize the \LQG cost {subject to a sensor cost budget}.  
The second, \emph{minimum-sensing \LQG control},
involves the joint design of control and sensing
to minimize the cost of the activated sensors subject to a desired \LQG cost.  

To solve the proposed \LQG problems, we first leverage a separation 
principle that partially decouples the control and sensor selection.\footnote{{The separation between control and \textit{sensor selection} is proved with the same steps as the separation of control and \textit{estimation} in standard \LQG control theory; e.g., see proof of Lemma~1 in~\cite{tanaka2015sdp}.}}  {As a negative result, we prove  the optimal sensor selection is inapproximable in polynomial time by a constant suboptimality bound across all problem instances.} Therefore, we develop algorithms with per-instance suboptimality bounds instead.  Particularly, we frame the sensor selection  
as the optimization of \textit{approximately} supermodular set functions, {using the notion of supermodularity ratio introduced in 2006 in \cite{lehmann2006combinatorial} (see also~\cite{wang2016approximation})}.\footnote{{The notion has met already increasing interest in the signal processing and control literature; see, for example,~\cite{chamon2016near,chamon2017mean, hashemi2019submodular,guo2019actuator,sviridenko2013optimal,sviridenko2017optimal}.} }
Then, we provide the first polynomial time algorithms, which provably retrieve a close-to-optimal choice of sensors, and
the corresponding {optimal} control policy.  Specifically, the suboptimality gaps of the algorithms depend on 
the supermodularity ratio $\gamma$ of the \LQG cost, and we establish connections between 
$\gamma$ and control-theoretic quantities, providing computable lower bounds for~$\gamma$.

\myParagraph{Contributions to set function optimization}
To prove the aforementioned results, we extend the literature on supermodular optimization.
Particularly, we provide the first efficient algorithm for approximately supermodular optimization (e.g., \LQG cost optimization) subject to {\textit{cost} constraints for subset selection (e.g., sensor selection).
To this end, we use the algorithm in~\cite{krause2005note}, proposed for \textit{exactly} supermodular optimization, and prove it maintains provable suboptimality bounds for even \textit{approximately} supermodular optimization.  Importantly, our bounds improve the previously known bounds for \textit{exactly} supermodular optimization: our bounds become $1-1/e$ for supermodular optimization, tightening the known $1/2(1-1/e)$~\cite{krause2005note}.   Noticeably, $1-1/e$ is the best possible bound in polynomial time for supermodular optimization subject to cardinality constraints ~\cite{feige1998}.  That way, our analysis equates the approximation difficulty of \textit{cost} and \textit{cardinality} constrained optimization for the first time (among all algorithms with at most quadratic running time).\footnote{{Other algorithms, that either achieve the $1-1/e$ bound but are slower ($O(n^5)$ instead of $O(n^2)$ that ours is), or they achieve looser bounds with the same running time, such as the $1-1/\sqrt{e}$, are found in~\cite{sviridenko2004note,krause2005note,nguyen2013budgeted,iyer2013submodular,zhang2016submodular,qian2017subset}.}} That way, our results are relevant beyond sensing in control, such as in cost effective outbreak detection in networks~\cite{leskovec2007cost}.}

{Similarly, we provide the first algorithm for minimal cost subset selection subject to desired  bounds on an approximately supermodular function.  The algorithm relies on a simplification of the algorithm in~\cite{krause2005note}.  
Leveraging our novel bounds, we show the algorithm is the first with provable suboptimality bounds given approximately supermodular functions.  Notably, for exactly supermodular functions the bound recovers the well-known bound for cardinality minimization~\cite{wolsey1982analysis};  that way, similarly to above, our analysis equates the approximation difficulty of cost and cardinality minimization for the first time.}

\myParagraph{Application examples}
We demonstrate the effectiveness of the proposed algorithms in numerical experiments, by considering  
two application scenarios:
\emph{sensing-constrained formation control}, and 
\emph{resource-constrained robot navigation}.    We present a Monte Carlo analysis for both, which demonstrates that 
(i) the proposed sensor selection strategy is near-optimal, and, particularly, the resulting \LQG cost 
matches the optimal selection in all tested instances 
for which the optimal selection could be computed via a brute-force approach;
(ii) a more naive selection which attempts to minimize the state estimation error~\cite{jawaid2015submodularity}
(rather than the \LQG cost) has degraded \LQG performance, often comparable to a random selection;
and (iii) the selection of a small subset of sensors using the proposed algorithms ensures an \LQG cost that is 
close to the one obtained by using all available sensors, hence providing an effective alternative for control under 
sensing constraints.

\myparagraph{Comparison with the preliminary results in~\cite{tzoumas2018sensing} {(which coincides with the preprint~\cite{Tzoumas18acc-scLGQG})}} 
This paper {(which coincides with the preprint \cite{tzoumas2018lqg})} extends the preliminary results in~\cite{tzoumas2018sensing},
and provides comprehensive 
presentation of the \LQG co-design problem, by including both  the
\emph{sensing-constrained \LQG control} (introduced in~\cite{tzoumas2018sensing}) and the \emph{minimum-sensing \LQG control} problem (not previously published). Moreover, we generalize the setup in~\cite{tzoumas2018sensing} to account for any sensor costs 
(in~\cite{tzoumas2018sensing} each sensor has unit cost, whereas herein sensors have different costs).  Also, we
extend the numerical analysis accordingly. {Moreover, we prove the inapproximability of the problem.}
Most of the technical results  (Theorems~\ref{th:LQG_closed}-\ref{th:performance_LQGconstr}, and Algorithms~\ref{alg:greedy_nonUniformCosts}-\ref{algLQGconstr:greedy_nonUniformCosts})
are novel, and have not been~published.

\myparagraph{Organization of the rest of the paper} 
Section~\ref{sec:problemStatement} formulates the  \LQG control and sensing co-design problems.
Section~\ref{sec:codesign} presents a separation principle, the inapproximability theorem, and introduces the algorithms for the co-design problems. 
Section~\ref{sec:guarantees} characterizes the performance of the algorithms, and 
establishes connections between their suboptimality bounds and control-theoretic quantities.
Section~\ref{sec:exp} presents two examples of the co-design problems.
Section~\ref{sec:con} concludes the paper.  
\textbf{{All proofs are given in the appendix.}}
\medskip

\myParagraph{Notation} 
Lowercase letters denote vectors and scalars (e.g., $v$); uppercase letters denote matrices (e.g., $M$). Calligraphic fonts denote sets (e.g., $\calS$).
$\eye$ denotes the identity matrix.


\section{Problem Formulation:\\ \LQG Control and Sensing Co-design} 
\label{sec:problemStatement}

Here we formalize the \LQG control and sensing co-design problem considered in this paper.  Specifically, we present two ``dual'' statements of the problem: the \emph{sensing-constrained \LQG control},
and the \emph{minimum-sensing \LQG control}.

\subsection{System, sensors, and control policies} 
\label{sec:definitions}

We start by introducing the paper's framework:
\setcounter{paragraph}{0}
\paragraph{System} We consider a discrete-time time-varying
linear system with additive Gaussian noise,
\begin{equation}
x\at{t+1}=A_tx_t+B_tu_t + w_t, \quad t = 1, 2, \ldots, T,\label{eq:system}
\end{equation}
where $x_t \in \mathbb{R}^{n}$ is  the system's state at time~$t$, $u_t \in \mathbb{R}^{m_t}$ is the control action, $w_t$ is the process noise, $A_t$ and $B_t$ are known matrices,
and 
$T$ is a finite horizon. Also, $x_1$ is a Gaussian random variable with covariance~$\Sigma\att{1}{1}$, and $w_t$ is a Gaussian random variable with mean zero and covariance $W_t$, such that $w_t$ is independent of $x_1$ and $w_{t'}$ for all $t'=1,2,\ldots,T$, $t'\neq t$.

\paragraph{Sensors} We consider the availability of a (potentially large) set $\calV$ of sensors, which can take noisy linear observations of the system's state.  
Particularly,
\begin{equation}
y_{i,t}=C_{i,t}x_t+v_{i,t}, \quad i \in \calV,\label{eq:sensors}
\end{equation}
where $y_{i,t}\in \mathbb{R}^{p_{i,t}}$ is the measurement 
of sensor $i$ at time~$t$, \validated{$C_t$}{$C_{i,t}$} is a sensing matrix,
and $v_{i,t}$ is the measurement noise.  We assume $v_{i,t}$ to be a Gaussian random variable with mean zero and positive definite covariance $V_{i,t}$, such that~$v_{i,t}$ is independent of $x_1$, and of $w_{t'}$ for any $t'\neq t$, and independent of $v_{i',t'}$ for all $t'\neq t$, and any $i'\in\calV$, $i' \neq i$.

{When only a subset 
$\calS \subseteq \calV$ of the sensors is active for all $t = 1,2,\ldots, T$,
then the measurement model becomes}
\begin{equation}
y_{t}(\calS) = C_t(\calS) x_t + v_{t}(\calS),\label{eq:activeSensors}
\end{equation}
where $
y_{t}(\calS) \triangleq [y_{i_1,t}\tran, y_{i_2,t}\tran, \ldots, y_{i_{|\calS|},t}\tran]\tran\!$,
$C_t(\calS) \triangleq [C\at{i_1,t}\tran, \ldots,$ $ C\at{i_{|\calS|},t}\tran]\tran\!$,
and $v_t(\calS)$ is a mean zero Gaussian noise with cova- riance $V_t(\calS) \triangleq \diag{V\at{i_1,t}, \ldots,V\at{i_{|\calS|},t}}$.

{Each sensor is associated with a (possibly different) cost, which captures, for example,  the sensor's monetary cost, its power consumption, or its bandwidth utilization.}  Specifically, 
we denote the \emph{cost of sensor} $i$ by $\sensorCost(i) \validated{}{\;\geq 0}$; and the \emph{cost of a sensor set} $\calS$ by $\sensorCost(\calS)$,
where we set
	\begin{equation}\label{notation:cost_active_sensor_set}
	\sensorCost(\calS)\;\validated{=}{\triangleq}\sum_{i \in \calS} \sensorCost(i).
	\end{equation}

\paragraph{Control policies}  We consider control policies~$u_t$ informed only by the active sensor set $\calS$:
\begin{equation*}
u_t = u\of{t}{\calS}= u_t\big(y\of{1}{\calS}, y\of{2}{\calS}, \ldots, y\of{t}{\calS}\big), \quad t = 1,2,\ldots,T.
\end{equation*}

\subsection{\LQG co-design problems} 
\label{sec:definitionsLQG}

{We define two versions of the co-design problem:
\emph{sensing-constrained \LQG control} 
and \emph{minimum-sensing \LQG control}. Their unifying goal is to find active sensors $\calS$ and a policy $u\of{1:T}{\calS} \triangleq \{u\of{1}{\calS},$ $u\of{2}{\calS}, \ldots, u\of{T}{\calS}\}$, such that the sensor cost $c(\calS)$ is low and the finite horizon LQG cost $h(\calS,u\of{1:T}{\calS})$ below is optimized:
\begin{equation}\label{eq:finiteLQGcost}
 h(\calS,u\of{1:T}{\calS})\triangleq \sum_{t=1}^{T}\mathbb{E}\left[\|x\of{t+1}{\calS}\|^2_{Q_t} +\|u\of{t}{\calS}\|^2_{R_t}\right],
\end{equation}
where $Q_1,  \ldots, Q_T$ are known positive semi-definite matricies, $R_1,  \ldots, R_T$ are known positive definite matricies, 
and the expectation is taken with respect to $x_1$, $w_{1:T}$, and $v\of{1:T}{\calS}$.  
Particularly, the \emph{sensing-constrained \LQG control} minimizes the \LQG cost subject to a sensor cost budget, and the dual \emph{minimum-sensing \LQG control} minimizes the sensor cost subject to a desired \LQG cost.

\begin{myproblem}[Sensing-constrained \LQG control]\label{prob:LQG}
Given a budget {$b \geq 0$} on the sensor cost, find sensors $\calS $ and a policy $u\of{1:T}{\calS} \triangleq \{u\of{1}{\calS},$ $u\of{2}{\calS}, \ldots, u\of{T}{\calS}\}$ to minimize the finite horizon \LQG cost $ h(\calS,u\of{1:T}{\calS})$:
\begin{equation}
\label{eq:sensingConstrainedLQG}
\!\!
\min_{\scriptsize\begin{array}{c}
\calS \subseteq \calV,\\
 u\of{1:T}{\calS}
\end{array}} 
  h(\calS,u\of{1:T}{\calS})\validated{:}{, 
 \;\; \subjectTo}\;\;\sensorCost(\calS)\leq \sensorBudget.
\end{equation}
\end{myproblem}

Problem~\ref{prob:LQG} models the practical case where we cannot 
activate all sensors (due to power, cost, or bandwidth constraints), and instead need to activate a few sensors to optimize control performance.
If the budget is increased so \validated{}{all 
sensors can be active, then Problem~\ref{prob:LQG} reduces to standard \LQG control.}


\begin{myproblem}[Minimum-sensing \LQG control]
\label{prob:minCostLQG}
Find a minimal cost sensor set $\calS$, and a policy $u\of{1:T}{\calS}$, such that the finite horizon \LQG cost $ h(\calS,u\of{1:T}{\calS})$ is at most $\kappa$, where $\kappa \; \geq 0$ is given:
\begin{equation}
\label{eq:minCostForBoundedLQG}
\!\!
\min_{\scriptsize\begin{array}{c}
\calS \subseteq \calV,\\
 u\of{1:T}{\calS}
\end{array}} \sensorCost(\calS)\validated{:}{, 
 \;\; \subjectTo}\;\; h(\calS,u\of{1:T}{\calS}) \leq \kappa.
\end{equation}
\end{myproblem}

Problem~\ref{prob:minCostLQG}  models the practical case where one wants to design a system with a prescribed performance, while incurring in the smallest \validated{}{sensor} cost.
}

\begin{myremark}
[Case of uniform-cost sensors] \label{rem:uniform_cost} When 
all sensors $i\in \calV$ have the same cost 
\validated{, say
unitary cost $\sensorCost(i)=1$, 
and the sensor budget $\sensorBudget$ is  a natural number such that $\sensorBudget \leq |\calV|$, then:}{, say
$\sensorCost(i)=\bar{c} > 0$, the sensor budget constraint becomes a cardinality constraint:
\bea
\sensorCost(\calS) \leq \sensorBudget  \;\; \Leftrightarrow  \;\;
\sum_{i \in \calS} \sensorCost(i) \leq \sensorBudget \;\; \Leftrightarrow  \;\;
|\calS| \bar{c} \leq \sensorBudget   \;\; \Leftrightarrow  \;\;
|\calS| \leq  \frac{\sensorBudget}{\bar{c}}. 
\eea
}
\end{myremark}


\section{Co-design Principles, {Hardness}, and Algorithms} 
\label{sec:codesign}

{We leverage a separation principle to derive that the optimization of the sensor set $\calS$ and of the control policy $u_{1:T}(\calS)$ can happen in cascade.
However, we show that optimizing for $\calS$ is inapproximable  in polynomial time.  Nonetheless, we then} present polynomial time algorithms for Problem~\ref{prob:LQG} and Problem~\ref{prob:minCostLQG} with provable
per-instance suboptimality bounds.  Particularly, the bounds are presented in Section~\ref{sec:guarantees}.\footnote{{The novelty of the algorithms is also discussed in Section~\ref{sec:guarantees}.}}

\subsection{Separability of optimal sensing and control design}\label{sec:separability}

We characterize the jointly optimal control and sensing solutions \validated{in}{for} Problem~\ref{prob:LQG} and Problem~\ref{prob:minCostLQG}, and prove 
they can be found in two separate steps, where first the sensor set is found, and then the control policy is computed.

\begin{mytheorem}[Separability of optimal sensor set and control policy design]\label{th:LQG_closed}
For any active sensor set $\calS$, let $\hat{x}\of{t}{\calS}$ be the Kalman estimator of the state $x_t$,
and $\Sigma\att{t}{t}(\calS)$ be $\hat{x}\of{t}{\calS}$'s error covariance.
Additionally, let the matrices $\Theta_t$ and $K_t$  be the solution of the \validated{}{following} backward Riccati recursion
\beal\label{eq:control_riccati}
S_t &= Q_t + N_{t+1}, \\
N_t &= A_t\tran (S_t\inv+B_tR_t\inv B_t\tran)\inv A_t, \\ 
M_t &= B_t\tran S_t B_t + R_t, \\ 
K_t &= - M_t\inv B_t\tran S_t A_t, \\ 
\Theta_t &= K_t\tran M_t K_t,
\eeal
with boundary condition $N_{T+1}=0$.

\begin{enumerate}
\item \emph{(Separability in Problem~\ref{prob:LQG})} 
Any optimal solution $(\calS^\star,$ $u_{1:T}^\star)$ to Problem~\ref{prob:LQG} can be computed in cascade:
\begin{align}
\calS^\star &\in \argmin_{\calS \subseteq \calV}\sum_{t=1}^T\text{tr}[\Theta_t\Sigma\att{t}{t}(\calS)]\validated{:}{, 
 \;\; \subjectTo}\;\;\sensorCost(\calS)\leq \sensorBudget,\label{eq:opt_sensors}\\
u_t^\star&=K_t\hat{x}\of{t}{\calS^\star}, \quad t=1,\ldots,T\label{eq:opt_control}.
\end{align}
 
\item \emph{(Separability in Problem~\ref{prob:minCostLQG})} 
Define the constant \validated{}{$\bar{\kappa} \triangleq \kappa-\trace{\initialCovariance N_1}-\sum_{t=1}^T\trace{W_t S_t}$}.
Any optimal solution $(\calS^\star,u_{1:T}^\star)$ to Problem~\ref{prob:minCostLQG}
can be computed in cascade: 
\begin{align}
\calS^\star &\in \argmin_{\scriptsize \calS \subseteq \calV} \sensorCost(\calS)
 \validated{:}{, 
 \;\; \subjectTo}\;\;\validated{}{\sum_{t=1}^T}\text{tr}[\Theta_t\Sigma\att{t}{t}(\validated{\algS}{\calS})] \leq  \bar{\kappa}, \label{eq:opt_sensors_minimum} \\
u_t^\star&=K_t\hat{x}\of{t}{\calS^\star}, \quad t=1,\ldots,T.\label{eq:opt_control_minimum}
\end{align}
\end{enumerate}
\end{mytheorem}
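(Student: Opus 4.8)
The plan is to reduce both claims to the classical \LQG separation principle, by first fixing the active set $\calS$ and solving the resulting inner optimization over control policies, and only afterwards optimizing over $\calS$. Concretely, for a fixed $\calS$ the inner problem $\min_{u_{1:T}(\calS)} h(\calS,u_{1:T}(\calS))$ is a standard finite-horizon \LQG problem, whose only dependence on $\calS$ enters through the measurements $y_{1:t}(\calS)$ available to the controller. I would solve it by backward dynamic programming: positing a quadratic cost-to-go and showing by induction that the value function is quadratic with Hessian given by the matrices $S_t,N_t$ of recursion~\eqref{eq:control_riccati}, so that the minimizing action is the certainty-equivalent feedback $u_t = K_t\hat{x}_t(\calS)$ with $K_t$ as in~\eqref{eq:control_riccati}. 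The crucial structural observation is that $S_t,N_t,M_t,K_t,\Theta_t$ are generated purely from $A_t,B_t,Q_t,R_t$ and therefore do \emph{not} depend on $\calS$: the sensor set influences the realized estimate $\hat{x}_t(\calS)$, but not the optimal gains.

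The second step is to evaluate the optimal inner cost and expose its dependence on $\calS$. Writing $x_t = \hat{x}_t(\calS) + e_t$, where $e_t$ is the Kalman estimation error with covariance $\Sigma_{t|t}(\calS)$, I would substitute the optimal feedback into~\eqref{eq:finiteLQGcost} and accumulate the terms through the recursion. By the defining property of the Kalman filter, $e_t$ is zero-mean and uncorrelated with $\hat{x}_t(\calS)$ and with every control action $u_1,\dots,u_t$ (which are measurable functions of $y_{1:t}(\calS)$); hence all cross terms between estimate and error vanish in expectation. This yields the key decomposition
\begin{equation*}
h\big(\calS,u^\star_{1:T}(\calS)\big) = \underbrace{\text{tr}[\Sigma_{1|1} N_1] + \sum_{t=1}^T \text{tr}[W_t S_t]}_{\text{independent of } \calS} \;+\; \sum_{t=1}^T \text{tr}[\Theta_t\,\Sigma_{t|t}(\calS)],
\end{equation*}
in which only the last sum depends on the active sensors.

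Given this decomposition, the two claims follow immediately. For Problem~\ref{prob:LQG}, the joint minimum over $(\calS,u_{1:T})$ is attained by first choosing $\calS$ to minimize $h(\calS,u^\star_{1:T}(\calS))$ subject to $c(\calS)\leq b$; since the constant term is irrelevant to the $\argmin$, this is exactly~\eqref{eq:opt_sensors}, after which $u^\star_t = K_t\hat{x}_t(\calS^\star)$ as in~\eqref{eq:opt_control}. For Problem~\ref{prob:minCostLQG}, the performance constraint $h(\calS,u^\star_{1:T}(\calS))\leq\kappa$ is equivalent, via the same decomposition, to $\sum_{t=1}^T \text{tr}[\Theta_t\,\Sigma_{t|t}(\calS)]\leq\bar{\kappa}$ with $\bar{\kappa} = \kappa - \text{tr}[\Sigma_{1|1} N_1] - \sum_{t=1}^T \text{tr}[W_t S_t]$; minimizing $c(\calS)$ under this relaxed constraint gives~\eqref{eq:opt_sensors_minimum}--\eqref{eq:opt_control_minimum}.

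I expect the main obstacle to be the careful bookkeeping of the orthogonality argument that makes the cross terms vanish: establishing that $e_t$ is zero-mean and satisfies $\E[\hat{x}_t(\calS)\,e_t\tran] = 0$ and $\E[u_s\,e_t\tran] = 0$ for $s\leq t$ is precisely the content of the separation principle, and it is what guarantees that the inner optimal cost splits additively into a sensor-independent part and the estimation term $\sum_{t=1}^T \text{tr}[\Theta_t\,\Sigma_{t|t}(\calS)]$. Once this is in place, the remaining algebra is the standard completion-of-squares and telescoping through the Riccati recursion, and the separability of the joint optimization is a direct consequence.
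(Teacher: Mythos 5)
Your proposal matches the paper's proof essentially step for step: the paper's Lemma~\ref{lem:LQG_closed} establishes, via the same backward dynamic-programming/completion-of-squares argument (following Lemma~1 of~\cite{tanaka2015sdp}) and the MMSE property of the Kalman estimate, exactly your decomposition $h[\calS,u^\star_{1:T}(\calS)]=\trace{\initialCovariance N_1}+\sum_{t=1}^T\trace{W_tS_t}+\sum_{t=1}^T\text{tr}[\Theta_t\Sigma\att{t}{t}(\calS)]$, from which part~(1) follows by dropping the $\calS$-independent constant and part~(2) by rewriting the constraint with $\bar{\kappa}$. The only cosmetic difference is that the paper devotes a separate contradiction argument (Lemma~\ref{lem:proof_part_2_th_perf}) to the equivalence between the constraint $h[\calS,u\of{1:T}{\calS}]\leq\kappa$ and its inner-minimized form $\min_{u\of{1:T}{\calS}}h[\calS,u\of{1:T}{\calS}]\leq\kappa$, which you correctly treat as immediate since the objective $\sensorCost(\calS)$ does not depend on the control policy.
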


\begin{myremark}[Certainty equivalence principle]
The control gain matrices 
$K_1, K_2, \ldots, K_T$
are the same as the ones that make the controllers $(K_1x_1$, $K_1x_2,\ldots, K_T x_T)$ optimal for the perfect state-information version of Problem~\ref{prob:LQG}, where the state $x_t$ is known to the controllers~\cite[Chapter~4]{bertsekas2005dynamic}.
\end{myremark}

Theorem~\ref{th:LQG_closed} decouples the sensing design from the control policy design.  
Particularly, once an optimal sensor set~$\calS^\star$ is found, then the optimal controllers are equal to $K_t\hat{x}\of{t}{\calS^\star}$, which correspond to the standard \LQG control policy. 

An intuitive interpretation of the sensor design steps in eqs.~\eqref{eq:opt_sensors} and~\eqref{eq:opt_sensors_minimum} follows next.

\begin{myremark}[Control-aware sensor design]\label{rmk:interpretation}
To provide insight on {$\sum_{t=1}^{T}\text{tr}[\Theta_t\Sigma\att{t}{t}(\calS)]$} in eqs.~\eqref{eq:opt_sensors}~and~\eqref{eq:opt_sensors_minimum}, we rewrite it~as
\begin{align}
\!\!\!
\displaystyle\sum_{t=1}^{T}\text{tr}[\Theta_t\Sigma\att{t}{t}(\calS)] &\!=\!
\displaystyle\sum_{t=1}^{T}\mathbb{E}\left(\text{tr}\{[x_t-\hat{x}\of{t}{\calS}]\tran\Theta_t[x_t-\hat{x}\of{t}{\calS}]\}\right) \nonumber \\
&\!=\!
\displaystyle\sum_{t=1}^{T}\mathbb{E}\left( \| K_t x_t-K_t \hat{x}\of{t}{\calS}\|^2_{M_t} \right), \label{eq:interpretationRemark}
\end{align}
since
$\Sigma\att{t}{t}(\calS) = \mathbb{E}\left[(x_t-\hat{x}\of{t}{\calS})(x_t-\hat{x}\of{t}{\calS})\tran\right]$, and $\Theta_t = K_t\tran M_t K_t$.
From eq.~\eqref{eq:interpretationRemark}, each  
$\text{tr}[\Theta_t\Sigma\att{t}{t}(\calS)]$ captures the  mismatch between the 
imperfect state-information controller $u\of{t}{\calS}=K_t\hat{x}\of{t}{\calS}$ (which is only aware of the 
measurements from the active sensors) and the  perfect state-information controller~$K_tx_t$.
That is, while standard sensor selection minimizes the estimation covariance, for instance by 
minimizing
\begin{equation}
\sum_{t=1}^{T}\text{tr}[\Sigma\att{t}{t}(\calS)] \triangleq
\displaystyle\sum_{t=1}^{T}\mathbb{E}\left(\| x_t-\hat{x}\of{t}{\calS}\|^2_2 \right),\label{eq:interpretationRemark_kalman}
\end{equation}
the proposed \LQG cost formulation \validated{attempts to minimize the estimation error of only the informative states to the perfect state-information controller:}{selectively minimizes the estimation error focusing on the  states that are most informative for control purposes. For example,} the mismatch contribution in eq.~\eqref{eq:interpretationRemark} of any $x_t-\hat{x}\of{t}{\calS}$ in the null space of $K_t$ is zero; accordingly, the proposed sensor design approach has 
no incentive in activating sensors to observe states which are irrelevant for control purposes. 
\end{myremark}

\subsection{{Inapproximability of optimal sensing design}}\label{sec:hardness}

{\begin{mytheorem}[Inapproximability]\label{th:hardness}
If \emph{NP}$\neq$\emph{P}, then there is no polynomial time algorithm for Problems~\ref{prob:LQG} and~\ref{prob:minCostLQG} that returns an approximate solution within a constant factor from the optimal.   This remains true, even if all sensors have cost~$1$.
\end{mytheorem}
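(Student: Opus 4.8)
The plan is to prove hardness for the sensor-selection problem that, by the separation principle of Theorem~\ref{th:LQG_closed}, underlies both problems. For any sensor set $\calS$ the optimal \LQG cost decomposes as $h(\calS,\optUoneT)=f(\calS)+C$, where $f(\calS)\triangleq\sum_{t=1}^T\trace{\Theta_t\Sigma\att{t}{t}(\calS)}$ is the control-weighted estimation penalty and $C\triangleq\trace{\Sigma\att{1}{1}N_1}+\sum_{t=1}^T\trace{W_t S_t}$ is a sensor-independent offset (the perfect-state-information cost); this is the decomposition underlying the constant $\bar\kappa$ in Theorem~\ref{th:LQG_closed}. I would reduce from \textsc{Set Cover}: given a ground set $[n]$, subsets $U_1,\dots,U_m$, and a budget $b$, I build a single-step ($T=1$) instance with state $x_1\in\Real{n}$, prior covariance $\Sigma\att{1}{1}=\eye$, process noise $W_1=0$, and one unit-cost sensor per subset, where sensor $i$ measures the coordinates $\{(x_1)_j:j\in U_i\}$ through $C_{i,1}$ with noise $V_{i,1}=\epsilon\eye$. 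With a diagonal prior and coordinate-wise measurements, the filtered variance of coordinate $j$ is at most $\epsilon$ when some selected sensor observes $j$ and equals $1$ otherwise, so $f(\calS)$ is small exactly when $\calS$ \emph{covers} $[n]$ and is bounded below by a constant otherwise.

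The main obstacle is that a multiplicative approximation of $h$ need not translate into one of $f$, because of the additive offset $C$: if $C$ is comparable to $f$, the ratio $h_{\mathrm{no}}/h_{\mathrm{yes}}$ stays bounded regardless of the combinatorial gap in $f$. I would neutralize $C$ by granting the controller overwhelming authority. Choosing $A_1=\eye$, $Q_1=\eye$, $R_1=\eye$, and $B_1=\beta\eye$, the recursion of Theorem~\ref{th:LQG_closed} gives $S_1=\eye$ and $\Theta_1=\tfrac{\beta^2}{\beta^2+1}\eye$, while the algebraic identity $N_1=A_1\tran S_1 A_1-\Theta_1$ (a Woodbury rewriting of the recursion) yields $N_1=\tfrac{1}{\beta^2+1}\eye$; hence $C=\trace{\Sigma\att{1}{1}N_1}=n/(\beta^2+1)\to 0$ while $\Theta_1\to\eye$ as $\beta\to\infty$. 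Intuitively, with unbounded actuation the perfect-information cost vanishes, so $h(\calS)\approx f(\calS)=\tfrac{\beta^2}{\beta^2+1}\sum_j\mathrm{Var}((x_1)_j\mid\calS)$ is governed purely by how well the selected sensors estimate the state, and the combinatorial gap survives.

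With this construction the gap is tunable. For any target factor $c$, I fix $\epsilon=1/(8cn)$ and $\beta=3\sqrt{cn}$, so that a covering set of size $\le b$ gives $h^\star\le n\epsilon+C<1/(4c)$, whereas any feasible $\calS$ missing a coordinate gives $h^\star\ge\tfrac12$. A purported polynomial-time $c$-approximation for Problem~\ref{prob:LQG} would then return a feasible $\hat{\calS}$ with $h(\hat{\calS})\le c\,h^\star<\tfrac12$ on ``yes'' instances and $h(\hat{\calS})\ge\tfrac12$ on ``no'' instances; since $h(\hat{\calS})$ is evaluable in polynomial time, thresholding at $\tfrac12$ would decide \textsc{Set Cover}, contradicting $\mathrm{P}\neq\mathrm{NP}$. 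All parameters have polynomial bit-length, and every sensor has unit cost, establishing the final clause. For Problem~\ref{prob:minCostLQG} I would use the same construction with $\epsilon=1/(8n)$, $\beta=3\sqrt{n}$, and $\kappa=\tfrac14$: then $h(\calS)\le\kappa$ holds iff $\calS$ covers $[n]$, so the minimum feasible sensor cost equals the minimum set-cover size, and any constant-factor approximation for Problem~\ref{prob:minCostLQG} would yield a constant-factor approximation for \textsc{Set Cover}, which is ruled out under $\mathrm{P}\neq\mathrm{NP}$.
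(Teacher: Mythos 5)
Your proof is correct, but it takes a genuinely different route from the paper's. The paper's Appendix~C is essentially a black-box reduction: it sets $T=1$ and $A_1=B_1=C_1=Q_1=R_1=\eye$, so that $\Theta_1=\eye/2$ and the sensing objective in eq.~\eqref{eq:opt_sensors} becomes $\tfrac12\mathrm{tr}[\Sigma_{1|1}(\calS)]$, and then imports wholesale the inapproximability of cost-constrained sensor selection for steady-state Kalman filtering from~\cite{ye2018complexity}, transferring it to Problems~\ref{prob:LQG} and~\ref{prob:minCostLQG} via Theorem~\ref{th:LQG_closed}. You instead give a self-contained reduction from \textsc{Set Cover}, and your route buys two things the paper leaves implicit. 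First, constant-factor hardness of the trace objective $f(\calS)$ does not automatically yield constant-factor hardness of the full \LQG cost $h=f+C$, precisely because of the additive sensor-independent offset $C=\mathrm{tr}(\Sigma_{1|1}N_1)+\sum_t\mathrm{tr}(W_tS_t)$; the paper's proof glosses over this transfer, whereas your high-gain choice $B_1=\beta\eye$ (giving $N_1=(\beta^2+1)^{-1}\eye\to 0$ and $\Theta_1\to\eye$, both of which I verified against the recursion in eq.~\eqref{eq:control_riccati}) explicitly drives $C\to 0$ so the combinatorial gap survives multiplicatively --- a step that is genuinely needed and is the most valuable part of your argument. Second, your construction uses unit costs throughout, so the theorem's final clause is established directly rather than inherited from the structure of the instances in~\cite{ye2018complexity}. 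Two minor repairs: $\beta=3\sqrt{cn}$ is irrational, so either enforce $\beta^2=9cn$ by taking $B_1=\eye$ and $R_1=(9cn)^{-1}\eye$ (only $B_1R_1^{-1}B_1\tran$ and $B_1(B_1\tran S_1B_1+R_1)^{-1}B_1\tran$ enter the quantities $N_1,\Theta_1$, so this is equivalent) or round $\beta$ to a nearby rational; and for Problem~\ref{prob:minCostLQG} your last step invokes the NP-hardness of approximating minimum \textsc{Set Cover} within any constant factor, which is true (e.g., Raz--Safra) but should be cited explicitly --- note that for Problem~\ref{prob:LQG} your threshold argument needs only plain NP-completeness of the \textsc{Set Cover} decision problem, which is the cleaner of your two endgames. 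The price of your route is length and that one external inapproximability citation; the price of the paper's route is its opacity about the additive-offset transfer and its full reliance on~\cite{ye2018complexity}.
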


We prove the theorem by reducing the inapproximable problem in~\cite{ye2018complexity} ---sensor selection with cost constraints for optimal steady state Kalman filtering error---  to eq.~\eqref{eq:opt_sensors}.

Motivated by the inapproximability of Problem~\ref{prob:LQG} and Problem~\ref{prob:minCostLQG}, we next present practical algorithms, which in Section~\ref{sec:guarantees} we prove to enjoy per-instance suboptimality bounds.}

\subsection{Co-design \validated{}{algorithms} for Problem~\ref{prob:LQG}}
\label{sec:designAlgorithm}


\begin{algorithm}[t]
\caption{\mbox{Joint sensing and control design \hspace{-.25mm}for \hspace{-.25mm}Problem~\ref{prob:LQG}.}}
\begin{algorithmic}[1]
\REQUIRE  Horizon $T$; system in eq.~\eqref{eq:system}; covariance $\initialCovariance$; LQG cost matrices $Q_t$ and $R_t$ in eq.~\eqref{eq:finiteLQGcost}; sensors in eq.~\eqref{eq:sensors}; sensor budget $\sensorBudget$; sensor cost~$\sensorCost(i)$, for all  $i \in \calV$. 
\ENSURE Active sensors $\widehat{\mathcal{S}}$, and controls $\hat{u}_1, \hat{u}_2,\ldots,\hat{u}_T$.
\STATE{Compute $\Theta_1,\Theta_2,\ldots,\Theta_T$ using eq.~\eqref{eq:control_riccati}.}
\STATE{Return $\widehat{\calS}$ returned by Algorithm~\ref{alg:greedy_nonUniformCosts}, which finds a solution to the optimization problem in eq.~\eqref{eq:opt_sensors};}
\STATE{Compute  $K_1,K_2,\ldots,K_T$ using eq.~\eqref{eq:control_riccati}.}
\STATE{At each $t=1\ldots,T$, compute the Kalman estimate of~$x_t$:
$$\hat{x}_t\triangleq\mathbb{E}[x_t|y\of{1}{\algS},y\of{2}{\algS},\ldots,y\of{t}{\algS}];$$
}
\STATE{At each $t=1,\ldots,T$, return $\hat{u}_t=K_t\hat{x}_t$.
}
\end{algorithmic} \label{alg:overall_nonUniformCosts}
\end{algorithm}

{We present a practical algorithm for the sensing-constrained \LQG control Problem~\ref{prob:LQG} (Algorithm~\ref{alg:overall_nonUniformCosts}). The algorithm follows Theorem~\ref{th:LQG_closed}: it first computes a sensing design, and then a control design, as described below.}

\myParagraph{\validated{}{Sensing design for Problem~\ref{prob:LQG}}}
\validated{}{Theorem~\ref{th:LQG_closed} implies an optimal sensor design for Problem~\ref{prob:LQG} can be computed by solving eq.~\eqref{eq:opt_sensors}.  To this end, Algorithm~\ref{alg:overall_nonUniformCosts}  first computes $\Theta_1, \Theta_2, \ldots, \Theta_T$ (Algorithm~\ref{alg:overall_nonUniformCosts}'s line~1).
Next, since eq.~\eqref{eq:opt_sensors} is inapproximable (Theorem~\ref{th:hardness}), 
Algorithm~\ref{alg:overall_nonUniformCosts} calls a greedy algorithm (Algorithm~\ref{alg:greedy_nonUniformCosts})
to compute a solution to eq.~\eqref{eq:opt_sensors} (Algorithm~\ref{alg:overall_nonUniformCosts}'s line~2).} 

Algorithm~\ref{alg:greedy_nonUniformCosts} computes a solution to eq.~\eqref{eq:opt_sensors} as follows: 
first, Algorithm~\ref{alg:greedy_nonUniformCosts} creates two candidate active sensor sets $\algSone$ and $\algStwo$ (lines~\ref{line:initializeSone_nonUniformCosts}-\ref{line:initializeStwo_nonUniformCosts}), of which only one will be selected as the solution to eq.~\eqref{eq:opt_sensors} (line~\ref{line:pick_best_set}). 
{In more detail, Algorithm~\ref{alg:greedy_nonUniformCosts}'s line~\ref{line:initializeSone_nonUniformCosts} lets~$\algSone$ be composed of a single sensor, namely the sensor~$i \in \calV$ that achieves the smallest value of the objective function in eq.~\eqref{eq:opt_sensors} 
and has smaller cost than the budget $b$ ($\sensorCost(i)\leq \sensorBudget$).
}
Then, Algorithm~\ref{alg:greedy_nonUniformCosts}'s line~\ref{line:initializeStwo_nonUniformCosts} initializes~$\algStwo$ with the empty set, and after the construction of~$\algStwo$ in Algorithm~\ref{alg:greedy_nonUniformCosts}'s lines~\ref{line:while_nonUniformCosts}--\ref{line:end_if_condition_less}, Algorithm~\ref{alg:greedy_nonUniformCosts}'s line~\ref{line:pick_best_set} computes which of $\algSone$ and $\algStwo$ achieves the smallest value for the objective function in eq.~\eqref{eq:opt_sensors}, and returns this set as the
solution to \mbox{eq.~\eqref{eq:opt_sensors}}.


\begin{algorithm}[t]
\caption{Sensing design for Problem~\ref{prob:LQG}.}
\begin{algorithmic}[1]
\REQUIRE
 Horizon $T$; system in eq.~\eqref{eq:system}; covariance $\initialCovariance$; LQG cost matrices $Q_t$ and $R_t$ in eq.~\eqref{eq:finiteLQGcost}; sensors in eq.~\eqref{eq:sensors}; sensor budget $\sensorBudget$; sensor cost~$\sensorCost(i)$, for all  $i \in \calV$. 
\ENSURE Sensor set $\algS$.
\STATE{$\algSone \leftarrow \arg\min_{i \in \calV, \sensorCost(i)\leq \sensorBudget}\sum_{t=1}^T\text{tr}[\Theta_t\Sigma\att{t}{t}(\{i\})]$;}\label{line:initializeSone_nonUniformCosts}
\STATE{$\algStwo\leftarrow\emptyset$; \quad $\calV' \leftarrow \calV$;} \label{line:initializeStwo_nonUniformCosts}
\WHILE {$\calV'\neq \emptyset$ and $\sensorCost(\algStwo)\leq \sensorBudget$}   \label{line:while_nonUniformCosts}
	\FORALL {$a\in \calV'$} \label{line:startFor1_nonUniformCosts}
	\STATE{$\algStwoAlpha\leftarrow \algStwo\cup \{a\}$; \quad $\initialCovariance(\algStwoAlpha)\leftarrow \initialCovariance$;}\label{line:initialize_covariance}
	\FORALL {$t = 1,\ldots, T$}
	\vspace{0.3mm}
	\STATE{\hspace*{-3.25mm}$\Sigma\att{t+1}{t}(\algStwoAlpha)\leftarrow A_{t} \Sigma\att{t}{t}(\algStwoAlpha) A_{t}\tran + W_{t}$;}
	\STATE{\hspace*{-3.25mm}$\Sigma\att{t}{t}(\algStwoAlpha)\leftarrow$}
	\STATE{\hspace*{-2.5mm}$[ 
	\Sigma\att{t}{t-1}(\algStwoAlpha)\inv + C_t(\algStwoAlpha)\tran V_t(\algStwoAlpha)\inv C_t(\algStwoAlpha)]\inv$;} 
	\ENDFOR\label{line:endtFor1_nonUniformCosts}
	\STATE{$\text{gain}_a \leftarrow \sum_{t=1}^T\text{tr}\{\Theta_t[\Sigma\att{t}{t}(\algStwo)-\Sigma\att{t}{t}(\algStwoAlpha)]\}$;} \label{line:cost_nonUniformCosts}
	\ENDFOR \label{line:endFor1_nonUniformCosts}
\STATE{$s \leftarrow \arg\max_{a \in \calV'}[ {\text{gain}_a}/{c(a)}]$;} \label{line:best_a_nonUniformCosts}
\STATE{$\algStwo\leftarrow \algStwo \cup \{s\}$;} \label{line:add_a_nonUniformCosts}
\STATE{$\calV' \leftarrow \calV'\setminus \{s\}$;}\label{line:remove_s}
\ENDWHILE \label{line:end_while_nonUniformCosts}
\IF {$\sensorCost(\algStwo)> \sensorBudget$}\label{line:if_condition_less}
\STATE{$\algStwo\leftarrow \algStwo\setminus\{s\}$;}
\ENDIF \label{line:end_if_condition_less}
\STATE{$\algS \leftarrow \arg\min_{\calS\in\{\algSone,\algStwo\}} \sum_{t=1}^T\text{tr}[\Theta_t\Sigma\att{t}{t}(\calS)]$.}\label{line:pick_best_set}
\end{algorithmic} \label{alg:greedy_nonUniformCosts}
\end{algorithm}

Specifically, Algorithm~\ref{alg:greedy_nonUniformCosts}'s lines~\ref{line:while_nonUniformCosts}--\ref{line:end_if_condition_less} construct $\algStwo$ as follows: at each iteration of the ``while loop'' (lines~\ref{line:while_nonUniformCosts}-\ref{line:end_while_nonUniformCosts})
a sensor is greedily added to~$\algStwo$, as long as $\algStwo$'s  cost does not exceed $\sensorBudget$.
Particularly, for each remaining sensor $a$ in $\calV\setminus \algStwo$,
 the ``for loop''  (lines~\ref{line:startFor1_nonUniformCosts}-\ref{line:endFor1_nonUniformCosts}) 
computes first
the estimation covariance resulting by adding $a$ in $\algStwo$, and then the marginal gain in the objective function in eq.~\eqref{eq:opt_sensors} (line~\ref{line:cost_nonUniformCosts}). 
Afterwards, the sensor inducing the largest marginal gain (normalized by the sensor's cost) is selected (line~\ref{line:best_a_nonUniformCosts}),
and is added in $\algStwo$ (line~\ref{line:add_a_nonUniformCosts}).  Finally, the ``if'' in lines~\ref{line:if_condition_less}-\ref{line:end_if_condition_less} ensure $\algStwo$ has cost at most $\sensorBudget$, by removing last sensor added in~$\algStwo$ if necessary.

\validated{}{\myParagraph{\validated{}{Control design for Problem~\ref{prob:LQG}}}
Theorem~\ref{th:LQG_closed} implies that given a sensor set, the controls for Problem~\ref{prob:LQG} can be computed according to the eq.~\eqref{eq:opt_control}.  To this end, Algorithm~\ref{alg:overall_nonUniformCosts} first computes $K_1,K_2,\ldots,K_T$ (line~3), and then, at each time $t=1,2,\ldots, T$, the Kalman estimate of the current state $x_t$ (line~4), and the corresponding control (line~5).}

\subsection{Co-design   \validated{}{algorithms}  for Problem~\ref{prob:minCostLQG}}
\label{sec:designAlgorithmLQGconst}

{This section presents a practical algorithm for Problem~\ref{prob:minCostLQG}  (Algorithm~\ref{algLQGconstr:overall_nonUniformCosts}).  Since the algorithm shares steps with Algorithm~\ref{alg:overall_nonUniformCosts}, we focus on the different ones.}

{Particularly, as Algorithm~\ref{alg:overall_nonUniformCosts} calls Algorithm~\ref{alg:greedy_nonUniformCosts} to solve eq.~\eqref{eq:opt_sensors}, similarly, Algorithm~\ref{algLQGconstr:overall_nonUniformCosts} calls Algorithm~\ref{algLQGconstr:greedy_nonUniformCosts} to solve eq.~\eqref{eq:opt_sensors_minimum}.   Algorithm~\ref{algLQGconstr:greedy_nonUniformCosts} is similar to Algorithm~\ref{alg:greedy_nonUniformCosts}, with the difference that Algorithm~\ref{algLQGconstr:greedy_nonUniformCosts} selects sensors until the upper bound $\bar{\kappa}$ in eq.~\eqref{eq:opt_sensors_minimum} is met (Algorithm~\ref{algLQGconstr:greedy_nonUniformCosts}'s line~3), whereas Algorithm~\ref{alg:greedy_nonUniformCosts} selects sensors up to the point the cost budget $b$ is violated (Algorithm~\ref{alg:greedy_nonUniformCosts}'s line~3).}


\section{Performance guarantees for \LQG co-design}
\label{sec:guarantees}

{We now quantify the suboptimality and running time of Algorithms~\ref{alg:overall_nonUniformCosts} and Algorithms~\ref{algLQGconstr:overall_nonUniformCosts}. Particularly, we prove both algorithms enjoy  {per-instance} suboptimality bounds,\footnote{Instead of constant suboptimality bounds across \textit{all} instances, which is impossible due to Theorem~\ref{th:hardness}.} and run in quadratic time. 
To this end, we present a notion of supermodularity ratio (Definition~\ref{def:super_ratio}), which we use to prove the suboptimality bounds. We~then establish connections between the ratio and 
control-theoretic quantities (Theorem~\ref{th:submod_ratio}), and conclude that the algorithms' suboptimality bounds are non-vanishing under control-theoretic conditions encountered \mbox{in most real-world systems (Theorem~\ref{th:freq}).}}

\subsection{Supermodularity ratio}\label{sec:submodularity}
 
To present the definition of \emph{supermodularity ratio}, we start \validated{with}{by defining}  \textit{monotonicity} and \textit{supermodularity}.

\begin{mydef}[Monotonicity~{\cite{nemhauser78analysis}}]\label{def:mon}
Consider any \validated{finite ground}{finite} set~$\mathcal{V}$.  The set function $f:2^\calV\mapsto \mathbb{R}$ is 
\emph{non-increasing} if and only if for any sets $\mathcal{A}\subseteq \validated{\mathcal{A}'}{\mathcal{B}}\subseteq\calV$, it \validated{is}{holds} $f(\mathcal{A})\geq f(\validated{\mathcal{A}'}{\mathcal{B}})$.
\end{mydef}


\begin{algorithm}[t]
\caption{\mbox{Joint Sensing and Control design \hspace{-.25mm}for \hspace{-.25mm}Problem~\ref{prob:minCostLQG}.}}
\begin{algorithmic}[1]
\REQUIRE  
Horizon $T$; system in eq.~\eqref{eq:system}; covariance $\initialCovariance$; LQG cost matrices $Q_t$ and $R_t$ in eq.~\eqref{eq:finiteLQGcost}; \LQG cost bound $\kappa$l sensors in eq.~\eqref{eq:sensors}; sensor cost~$\sensorCost(i)$, for all  $i \in \calV$. 
\ENSURE Active sensors $\widehat{\mathcal{S}}$, and controls $\hat{u}_1, \hat{u}_2,\ldots,\hat{u}_T$.
\STATE{Compute $\Theta_1,\Theta_2,\ldots,\Theta_T$ using eq.~\eqref{eq:control_riccati}.}
\STATE{Return $\widehat{\calS}$ returned by Algorithm~\ref{algLQGconstr:greedy_nonUniformCosts}, which finds a solution to the optimization problem in eq.~\eqref{eq:minCostForBoundedLQG} \label{line:sensingDesign_minimal};
 }
\STATE{Compute  $K_1,K_2,\ldots,K_T$ using eq.~\eqref{eq:control_riccati}.}
\STATE{At each $t=1\ldots,T$, compute the Kalman estimate of~$x_t$:
	$$\hat{x}_t\triangleq\mathbb{E}[x_t|y\of{1}{\algS},y\of{2}{\algS},\ldots,y\of{t}{\algS}];$$
}
\STATE{At each $t=1,\ldots,T$, return $\hat{u}_t=K_t\hat{x}_t$.
}
\end{algorithmic} \label{algLQGconstr:overall_nonUniformCosts}
\end{algorithm}

\begin{mydef}[Supermodularity~{\cite[Proposition 2.1]{nemhauser78analysis}}]\label{def:sub}
Consider any \validated{finite ground}{finite}  set $\calV$.  The set function $f:2^\calV\mapsto \mathbb{R}$ is \emph{supermodular} if and only if
for any sets $\mathcal{A}\subseteq \validated{\mathcal{A}'}{\mathcal{B}}\subseteq\calV$, and any element $\elem\in \calV$, it \validated{is}{holds}  
$f(\mathcal{A})-\!f(\mathcal{A}\cup \{\elem\})\geq f(\validated{\mathcal{A}'}{\mathcal{B}})-\!f(\validated{\mathcal{A}'}{\mathcal{B}}\cup \{\elem\})$.
\end{mydef}
That is, $f$ is supermodular if and only if it satisfies a diminishing returns property: for any $\elem\in \mathcal{V}$, the  drop $f(\mathcal{A})-f(\mathcal{A}\cup \{\elem\})$ diminishes as the set $\mathcal{A}$ grows.

\begin{mydef}[Supermodularity ratio~\cite{lehmann2006combinatorial}]\label{def:super_ratio}
Consider any \validated{finite ground}{finite}  set~$\mathcal{V}$, and a non-increasing set \mbox{function $f:2^\calV\mapsto \mathbb{R}$}.  We define the~\emph{supermodularity ratio} of $f$ as
\begin{equation*}
\gamma_f \triangleq \min_{\mathcal{A} \subseteq \calB \subseteq\mathcal{V}, \elem \in \mathcal{V}\setminus\calB} \frac{f(\mathcal{A})-f(\mathcal{A}\cup \{\elem\})}{f(\mathcal{B})-f(\mathcal{B}\cup \{\elem\})}.
\end{equation*}
\end{mydef}
The supermodularity ratio $\gamma_f$ measures how far $f$ is from being supermodular.  Particularly, $\gamma_f$ takes values in $[0,1]$, and
if $\gamma_f= 1$, then
$ f(\mathcal{A})-f(\mathcal{A}\cup \{\elem\})\geq f(\mathcal{B})-f(\mathcal{B}\cup \{\elem\})$, i.e., $f$ is supermodular.  Whereas, 
if $\validated{}{0<}\gamma_f < 1$, then
$ f(\mathcal{A})- f(\mathcal{A}\cup \{\elem\})\geq \textstyle\gamma_f \left[f(\mathcal{B})-f(\mathcal{B}\cup \{\elem\})\right]$,
i.e.,  
$\gamma_f$ captures how much ones needs to discount $f(\mathcal{B})-f(\mathcal{B}\cup \{\elem\})$, 
such that $f(\mathcal{A})-f(\mathcal{A}\cup \{\elem\})$ is at least  $f(\mathcal{B})-f(\mathcal{B}\cup \{\elem\})$.  {In this case, $f$ is called \textit{approximately (or weakly) supermodular}~\cite{krause2010submodular}.}  


\begin{algorithm}[t]
\caption{Sensing design for Problem~\ref{prob:minCostLQG}.}
\begin{algorithmic}[1]
\REQUIRE Horizon $T$; system in eq.~\eqref{eq:system}; covariance $\initialCovariance$; LQG cost matrices $Q_t$ and $R_t$ in eq.~\eqref{eq:finiteLQGcost}; \LQG cost bound $\kappa$l sensors in eq.~\eqref{eq:sensors}; sensor cost~$\sensorCost(i)$, for all  $i \in \calV$. 
\ENSURE Active sensors $\algS$.
\STATE \validated{}{$\bar{\kappa} \leftarrow \kappa-\trace{\initialCovariance N_1}-\sum_{t=1}^T\trace{W_tS_t}$}\label{lineLQGconstr:kappabar}
\STATE{$\algS\leftarrow\emptyset$; \quad $\calV' \leftarrow \calV$;} \label{lineLQGconstr:initializeStwo_nonUniformCosts}\
\WHILE {$\calV'\neq \emptyset$ and $\validated{}{\sum_{t=1}^T}\text{tr}[\Theta_t\Sigma\att{t}{t}(\algS)] > \validated{}{\bar{\kappa}}$}   \label{lineLQGconstr:while_nonUniformCosts}
	\FORALL {$a\in \calV'$} \label{lineLQGconstr:startFor1_nonUniformCosts}
	\STATE{$\algSAlpha\leftarrow \algS\cup \{a\}$; \quad $\initialCovariance(\algSAlpha)\leftarrow \initialCovariance$;}\label{lineLQGconstr:initialize_covariance}
	\FORALL {$t = 1,\ldots, T$}
	\vspace{0.3mm}
		\STATE{\hspace*{0mm}$\Sigma\att{t+1}{t}(\algSAlpha)\leftarrow A_{t} \Sigma\att{t}{t}(\algSAlpha) A_{t}\tran + W_{t}$;}
	\STATE{\hspace*{0mm}$\Sigma\att{t}{t}(\algSAlpha)\leftarrow$}
	\STATE{\hspace*{0mm}\;\;\;$[ 
	\Sigma\att{t}{t-1}(\algSAlpha)\inv + C_t(\algSAlpha)\tran V_t(\algSAlpha)\inv C_t(\algSAlpha)]\inv$;} 
	\ENDFOR\label{lineLQGconstr:endtFor1_nonUniformCosts}
	\STATE{$\text{gain}_a \leftarrow \sum_{t=1}^T\text{tr}\{\Theta_t[\Sigma\att{t}{t}(\algS)-\Sigma\att{t}{t}(\algSAlpha)]\}$;} \label{lineLQGconstr:cost_nonUniformCosts}
	\ENDFOR \label{lineLQGconstr:endFor1_nonUniformCosts}
\STATE{$s \leftarrow \arg\max_{a \in \calV'}[ {\text{gain}_a}/{c(a)}]$;} \label{lineLQGconstr:best_a_nonUniformCosts}
\STATE{$\algS\leftarrow \algS \cup \{s\}$;} \label{lineLQGconstr:add_a_nonUniformCosts}
\STATE{$\calV' \leftarrow \calV'\setminus \{s\}$;}\label{lineLQGconstr:remove_s}
\ENDWHILE \label{lineLQGconstr:end_while_nonUniformCosts}
\end{algorithmic} \label{algLQGconstr:greedy_nonUniformCosts}
\end{algorithm}

\subsection{Performance analysis for Algorithm~\ref{alg:overall_nonUniformCosts}}\label{sec:performanceGuarantees}

We quantify Algorithm~\ref{alg:overall_nonUniformCosts}'s running time and suboptimality, using the notion of supermodularity ratio.  We use the notation:
\begin{itemize}
	\item $g(\calS)$ is the optimal value of $h[\calS, u\of{1:T}{\calS}]$ across all $u\of{1:T}{\calS}$,  given any $\calS$:
	\begin{equation}
	\textstyle\hspace{-0.6em}g(\calS)\triangleq\min_{\scriptsize 
		u\of{1:T}{\calS}}h[\calS, u\of{1:T}{\calS}], \label{eq:gS}
	\end{equation}
	\item $h^\star\triangleq\min_{\scriptsize 
		\calS \subseteq \calV, 
		u\of{1:T}{\calS}} h[\calS, u\of{1:T}{\calS}]\validated{:}{, 
		\;\; \subjectTo}\;\;\sensorCost(\calS)\leq \sensorBudget$, i.e., the \validated{}{optimal value} of Problem~\ref{prob:LQG};
	\item  $
	\sensorBudget^\star\triangleq\min_{\scriptsize 
		\calS \subseteq \calV, 
		u\of{1:T}{\calS}} \sensorCost(\calS)\validated{:}{, 
		\;\; \subjectTo}\;\; h[\calS, u\of{1:T}{\calS}] \leq \kappa$, i.e., the \validated{}{optimal} value of Problem~\ref{prob:minCostLQG}.
\end{itemize}

\begin{mytheorem}[Performance of Algorithm~\ref{alg:overall_nonUniformCosts}]\label{th:approx_bound} 
Algorithm~\ref{alg:overall_nonUniformCosts} returns a sensor set $\algS$ and control policies $u\of{1:T}{\algS}$ such that
\begin{align}
\begin{split}
&\frac{h[\emptyset, u\of{1:T}{\emptyset}]-h[\algS,u_{1:T}(\algS)]}{h[\emptyset, u\of{1:T}{\emptyset}]-h^\star}\geq\\
&\qquad\qquad \max\left[\frac{\gamma_g}{2}\left(1-e^{-\gamma_g}\right),1-e^{-\gamma_g\sensorCost(\algS)/\sensorBudget} \right],
\end{split} \label{ineq:approx_bound}
\end{align}
where $\gamma_g$ is the supermodularity ratio of
$g(\calS)$ in eq.~\eqref{eq:gS}.

Moreover, Algorithm~\ref{alg:overall_nonUniformCosts} runs in $O(|\calV|^2Tn^{2.4})$ time.
\end{mytheorem}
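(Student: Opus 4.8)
The plan is to first collapse the control part with Theorem~\ref{th:LQG_closed}, reducing inequality~\eqref{ineq:approx_bound} to a purely combinatorial statement about the set function $g$ of eq.~\eqref{eq:gS}, and then to run the classical cost--benefit greedy analysis, but carried out with the supermodularity ratio of Definition~\ref{def:super_ratio} in place of exact supermodularity. By Theorem~\ref{th:LQG_closed}, $g(\calS)$ equals $\sum_{t=1}^T\text{tr}[\Theta_t\Sigma\att{t}{t}(\calS)]$ up to an additive constant independent of $\calS$, and the set $\algS$ returned by Algorithm~\ref{alg:overall_nonUniformCosts} is exactly the set on which Algorithm~\ref{alg:greedy_nonUniformCosts} optimizes eq.~\eqref{eq:opt_sensors}. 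Since adding sensors can only shrink the Kalman covariance in the positive-semidefinite order, $g$ is non-increasing; hence $f(\calS)\triangleq g(\emptyset)-g(\calS)$ is non-negative, non-decreasing, normalized with $f(\emptyset)=0$, and its marginal increments $f(\calS\cup\{\elem\})-f(\calS)$ coincide with the drops $g(\calS)-g(\calS\cup\{\elem\})$. Therefore $f$ has the same ratio $\gamma_g$ as $g$, and, because $g(\emptyset)=h[\emptyset,u\of{1:T}{\emptyset}]$ and $g(\calS^\star)=h^\star$, the left-hand side of~\eqref{ineq:approx_bound} is precisely $f(\algS)/f(\calS^\star)$.

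The technical heart is the greedy recursion. First I would upgrade the element-wise Definition~\ref{def:super_ratio} to a set-level bound against $\calS^\star$: ordering $\calS^\star\setminus\calS=\{o_1,\dots,o_k\}$ and telescoping $f(\calS\cup\calS^\star)-f(\calS)$ along this order, the element-wise ratio applied to each increment gives $\gamma_g[f(\calS\cup\calS^\star)-f(\calS)]\le\sum_{o\in\calS^\star\setminus\calS}[f(\calS\cup\{o\})-f(\calS)]$. Let $\calS^{(i)}$ be the partial greedy set after $i$ additions, and $s_{i+1}$, $c_{i+1}$ the next chosen sensor and its cost. The cost-normalized selection rule in Algorithm~\ref{alg:greedy_nonUniformCosts} (line~\ref{line:best_a_nonUniformCosts}) guarantees, for every still-available $o\in\calS^\star\setminus\calS^{(i)}$, that $[f(\calS^{(i+1)})-f(\calS^{(i)})]/c_{i+1}\ge[f(\calS^{(i)}\cup\{o\})-f(\calS^{(i)})]/\sensorCost(o)$. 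Summing over such $o$, using $\sensorCost(\calS^\star)\le\sensorBudget$, the set-level bound above, and monotonicity ($f(\calS^{(i)}\cup\calS^\star)\ge f(\calS^\star)$), yields $\gamma_g[f(\calS^\star)-f(\calS^{(i)})]\le(\sensorBudget/c_{i+1})[f(\calS^{(i+1)})-f(\calS^{(i)})]$. Writing $\delta_i\triangleq f(\calS^\star)-f(\calS^{(i)})$ this reads
\[
\delta_{i+1}\le\left(1-\frac{\gamma_g c_{i+1}}{\sensorBudget}\right)\delta_i,
\]
so that, using $1-x\le e^{-x}$ and telescoping, $\delta_k\le\delta_0\exp(-\gamma_g\sensorCost(\calS^{(k)})/\sensorBudget)$ with $\delta_0=f(\calS^\star)$. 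Rearranged, $f(\calS^{(k)})\ge(1-e^{-\gamma_g\sensorCost(\calS^{(k)})/\sensorBudget})f(\calS^\star)$, and since $\algS$ is by construction at least as good as $\algStwo$, this delivers the second term inside the max in~\eqref{ineq:approx_bound} (the case where the greedy loop exhausts $\calV$ within budget makes $f(\algStwo)\ge f(\calS^\star)$, so the bound is immediate).

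For the worst-case term I would apply the same recursion up to the first index $\ell+1$ at which the accumulated cost first exceeds $\sensorBudget$; then $\sensorCost(\calS^{(\ell+1)})>\sensorBudget$ forces $f(\calS^{(\ell+1)})>(1-e^{-\gamma_g})f(\calS^\star)$. Splitting off the last element and bounding its increment through the element-wise ratio (with $\calA=\emptyset$, $\calB=\calS^{(\ell)}$, $\elem=s_{\ell+1}$) gives $f(\calS^{(\ell+1)})-f(\calS^{(\ell)})\le\gamma_g^{-1}f(\{s_{\ell+1}\})\le\gamma_g^{-1}f(\algSone)$, where the last step uses that $\algSone$ is the best individually feasible singleton (assuming, w.l.o.g., that any sensor with cost exceeding $\sensorBudget$ is discarded at the outset, as it can never appear in a feasible set). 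Combining with $f(\calS^{(\ell)})=f(\algStwo)$ (the ``if'' of lines~\ref{line:if_condition_less}--\ref{line:end_if_condition_less} removes exactly the violating element) and $f(\algS)\ge\max\{f(\algSone),f(\algStwo)\}$ yields $(1-e^{-\gamma_g})f(\calS^\star)<(1+\gamma_g^{-1})f(\algS)$; since $\gamma_g\le1$ implies $\gamma_g/(1+\gamma_g)\ge\gamma_g/2$, we obtain $f(\algS)\ge\tfrac{\gamma_g}{2}(1-e^{-\gamma_g})f(\calS^\star)$, the first term. Taking the larger of the two bounds proves~\eqref{ineq:approx_bound}.

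For the running time, the while loop of Algorithm~\ref{alg:greedy_nonUniformCosts} executes at most $|\calV|$ times; each pass scans $O(|\calV|)$ candidate sensors and, for each, propagates a Kalman covariance over the horizon $T$, whose per-step cost is dominated by the $n\times n$ matrix products and inversions at $O(n^{2.4})$. This gives $O(|\calV|^2Tn^{2.4})$, which dominates the $O(Tn^{2.4})$ Riccati recursion and the final estimation/control passes of Algorithm~\ref{alg:overall_nonUniformCosts}. I expect the main obstacle to be bookkeeping rather than any single deep inequality: correctly coupling the two candidate sets $\algSone,\algStwo$ with the budget-violation edge case, ensuring the cost-normalized increments telescope cleanly into the exponential factor $e^{-\gamma_g\sensorCost(\calS^{(k)})/\sensorBudget}$, and legitimately lifting the ratio $\gamma_g$ from its element-wise definition to the set-level comparison against $\calS^\star$.
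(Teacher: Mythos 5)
Your proposal is correct and follows essentially the same route as the paper's Appendix~D: the same reduction via Theorem~\ref{th:LQG_closed}, the same greedy-versus-$\calS^\star$ per-iteration inequality (the paper's Lemmas~\ref{lem:gen_lemma_2}--\ref{lem:gen_lemma_3}, which your telescoping and cost-normalized recursion reproduce), the same exponential relaxation, the same splitting-off of the budget-violating element bounded through the ratio against the best singleton $\algSone$, and the same runtime accounting. The only differences are cosmetic --- you use $1-x\leq e^{-x}$ in place of the paper's Lemma~\ref{lem:minimum_of_products}, a $(1+\gamma_g^{-1})$ rearrangement in place of the paper's ``$2\max$'' step, and you explicitly handle two edge cases the paper glosses over (exhaustion of $\calV$ within budget, and singletons with cost exceeding $\sensorBudget$), which is if anything slightly more careful.
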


In ineq.~\eqref{ineq:approx_bound}, $h[\emptyset, u\of{1:T}{\emptyset}]-h[\algS,u_{1:T}(\algS)]$ 
quantifies the gain from selecting $\algS$, and ineq.~\eqref{ineq:approx_bound}'s right-hand-side
guarantees the gain is close to the optimal
$h[\emptyset, u\of{1:T}{\emptyset}]-h^\star$.\footnote{{Even if no sensors are active, observe $h[\emptyset, u\of{1:T}{\emptyset}]$ is well defined and finite, since it is the \LQG cost over a \textit{finite} horizon $T$.}} 
Specifically, when either of the bounds in ineq.~\eqref{ineq:approx_bound}'s right-hand-side is $1$, then the algorithm returns an optimal solution.  

For comparison,  in Fig.~\ref{fig:bounds} we plot the bounds for  $\sensorCost(\algS)/\sensorBudget\in \{2/5, 1, 2\}$ and all $\gamma_g \in [0,1]$.
We observe that $1-e^{-\gamma_g\sensorCost(\algS)/\sensorBudget}$ dominates $\gamma_g/2\left(1-e^{-\gamma_g}\right)$ for $\sensorCost(\algS)/\sensorBudget>2/5$.
Moreover, as $\sensorCost(\algS)/\sensorBudget$ and $\gamma_g$ increase, then $1-e^{-\gamma_g\sensorCost(\algS)/\sensorBudget}$ tends to $1$, in which case, Algorithm~\ref{alg:overall_nonUniformCosts} returns an optimal solution.  

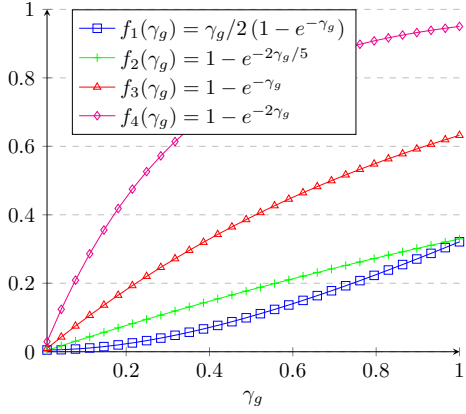
\begin{figure}[t]
\begin{center}
\begin{tikzpicture}[scale=0.8]
\begin{axis}[
    axis lines = left,
    xlabel = $\gamma_g$,
    ymajorgrids=true,
    grid style=dashed,
    legend style={at={(0.75,1)}},
    ymin=0, ymax=1,
]
\addplot [
    domain=0.01:1, 
    samples=30, 
    color=blue,
    mark=square,
    ]
    {x/2*(1-exp(-x))+0.005};
\addlegendentry{$f_1(\gamma_g)=\gamma_g/2\left(1-e^{-\gamma_g}\right)$}

\addplot [
    domain=0.01:1, 
    samples=30, 
    color=green,
    mark=+,
    ]   
{1-exp(-x/2.5)};
\addlegendentry{$\hspace*{-17pt}f_2(\gamma_g)=1-e^{-2\gamma_g/5}$}

\addplot [
    domain=0.01:1, 
    samples=30, 
    color=red,
    mark=triangle,
    ]   
{1-exp(-x)};
\addlegendentry{$\hspace*{-29pt}f_3(\gamma_g)=1-e^{-\gamma_g}$}

\addplot [
    domain=0.01:1, 
    samples=30, 
    color=magenta,
    mark=diamond,
    ]   
{1-exp(-3*x)};
\addlegendentry{$\hspace*{-25pt}f_4(\gamma_g)=1-e^{-2\gamma_g}$}
\end{axis}
\end{tikzpicture}
\caption{Plot of $f_i(\gamma_g)$, where $i=1,2,3,4$, for increasing values of $\gamma_g$ (each $f_i$ is defined in the figure's legend). By Definition~\ref{def:super_ratio} of $\gamma_g$, $\gamma_g$ takes values  between $0$ and $1$. 
}\label{fig:bounds}
\end{center}
\end{figure}

\begin{myremark}[Novelty of algorithm and bounds]\label{rem:novel1}
{Algorithm~\ref{alg:overall_nonUniformCosts} is the first scalable algorithm for Problem~\ref{prob:LQG}. Notably, although Algorithm~\ref{alg:greedy_nonUniformCosts} (used in Algorithm~\ref{alg:overall_nonUniformCosts}) is the same as the Algorithm~1 in~\cite{krause2005note},  the latter was introduced for \emph{exactly supermodular} optimization, instead of \emph{approximately supermodular optimization}, which is the  optimization framework in this paper.  Therefore, one of our contributions with Theorem~\ref{th:approx_bound} is to prove Algorithm~\ref{alg:greedy_nonUniformCosts} maintains suboptimality bounds even for approximately supermodular optimization.
The novel bounds in Theorem~\ref{th:approx_bound} also improve upon the previously known~\cite{krause2005note,leskovec2007cost} for exactly supermodular optimization: particularly, our bounds can become $1-1/e$ for supermodular optimization (the closer $c(\algS)/b$ is to 1), tightening the known $1/2(1-1/e)$~\cite{krause2005note,leskovec2007cost}. Noticeably, $1-1/e$ is the best possible bound in polynomial time for submodular optimization subject to \emph{cardinality} constraints~\cite{nemhauser78analysis}, instead of the general \emph{cost} constraints in this paper. That way, our analysis equates the approximation difficulty of \emph{cost} and \emph{cardinality} constrained optimization for the first time (among all algorithms with at most quadratic running time in the number of available elements in $\calV$, i.e., those in~\cite{krause2005note,leskovec2007cost,nemhauser78analysis}, and ours).}
\end{myremark}

{All in all, Theorem~\ref{th:approx_bound} guarantees that Algorithm~\ref{alg:overall_nonUniformCosts} achieves a close-to-optimal solution for Problem~\ref{prob:LQG}, whenever $\gamma_g > 0$.} 
In Section~\ref{sec:non-zero_ratio} we present conditions such that
$\gamma_g > 0$.

Finally, Theorem~\ref{th:approx_bound} also quantifies the scalability of  
Algorithm~\ref{alg:overall_nonUniformCosts}.  Particularly, Algorithm~\ref{alg:overall_nonUniformCosts}'s running time $O(|\calV|^2Tn^{2.4})$ is in the worst-case quadratic in the number of available sensors $\calV$ (when all must be chosen active), and linear in the Kalman filter's running time: specifically, the multiplier $Tn^{2.4}$ is due to the complexity of computing all $\Sigma\att{t}{t}$ for $t=1,2,\ldots,T$~\cite[Appendix~E]{bertsekas2005dynamic}.

\subsection{Performance analysis for Algorithm~\ref{algLQGconstr:overall_nonUniformCosts}}\label{sec:performanceGuaranteesLQGconstr}

\begin{mytheorem}[Performance of Algorithm~\ref{algLQGconstr:overall_nonUniformCosts}]\label{th:performance_LQGconstr} 
Consider Algorithm~\ref{algLQGconstr:overall_nonUniformCosts} returns a sensor set $\algS$ and control policies $u\of{1:T}{\algS}$.  Let $s_l$ be the last sensor added to $\algS$.  Then,
\begin{align}
&\hspace{-0.3cm} h[\algS,u_{1:T}(\algS)]\leq \kappa; \label{ineq:approx_boundLQGconstr}\\
&\hspace{-0.3cm}  \sensorCost(\algS)\leq 
\sensorCost(s_l)+ \frac{1}{\gamma_g}\log\left(\frac{h[\emptyset, u\of{1:T}{\emptyset}]-\kappa}{h[{\algS}_{l-1}, u\of{1:T}{{\algS}_{l-1}}]-\kappa}\right)\sensorBudget^\star\!\!,\label{ineq:cost_approx_bound}
\end{align}
where $\algS_{l-1}\triangleq\algS\setminus \{s_l\}$.

Additionally, Algorithm~\ref{algLQGconstr:overall_nonUniformCosts} runs in $O(|\calV|^2Tn^{2.4})$ time.
\end{mytheorem}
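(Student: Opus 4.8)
The plan is to reduce the claim to a weighted greedy set-cover argument applied to the surrogate objective of Theorem~\ref{th:LQG_closed}. Recall from eq.~\eqref{eq:gS} that $g(\calS)=\min_{u\of{1:T}{\calS}}h[\calS,u\of{1:T}{\calS}]$, and note that Theorem~\ref{th:LQG_closed} together with the definition of $\bar{\kappa}$ implies $g(\calS)=\sum_{t=1}^T\trace{\Theta_t\Sigma\att{t}{t}(\calS)}+\trace{\initialCovariance N_1}+\sum_{t=1}^T\trace{W_t S_t}$, so that $g(\calS)-\kappa=\sum_{t=1}^T\trace{\Theta_t\Sigma\att{t}{t}(\calS)}-\bar{\kappa}$. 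First I would establish feasibility (ineq.~\eqref{ineq:approx_boundLQGconstr}): the while loop of Algorithm~\ref{algLQGconstr:greedy_nonUniformCosts} (line~\ref{lineLQGconstr:while_nonUniformCosts}) exits only when $\sum_{t=1}^T\trace{\Theta_t\Sigma\att{t}{t}(\algS)}\le\bar{\kappa}$, which by the identity above is exactly $g(\algS)\le\kappa$; since Algorithm~\ref{algLQGconstr:overall_nonUniformCosts} returns the controls $\hat{u}_t=K_t\hat{x}_t$ that attain $g(\algS)$ by Theorem~\ref{th:LQG_closed}, we get $h[\algS,u_{1:T}(\algS)]=g(\algS)\le\kappa$.

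For the cost bound (ineq.~\eqref{ineq:cost_approx_bound}), I would index the greedily selected sensors as $s_1,\dots,s_l$ with partial sets $\algS_0=\emptyset\subset\algS_1\subset\dots\subset\algS_l=\algS$, and set $\rho_j\triangleq g(\algS_j)-\kappa$. The core is a per-step decay recursion. Fix a step $j\le l-1$, so that $\rho_j>0$ because the loop has not yet terminated, and let $\optS$ be an optimal solution of Problem~\ref{prob:minCostLQG}, which by Theorem~\ref{th:LQG_closed} coincides with an optimal solution of eq.~\eqref{eq:opt_sensors_minimum}; hence $g(\optS)\le\kappa$ and $\sensorCost(\optS)=\optBudget$. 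Since $g$ is non-increasing, $g(\algS_j\cup\optS)\le g(\optS)\le\kappa$, so $g(\algS_j)-g(\algS_j\cup\optS)\ge\rho_j$. Writing $\optS\setminus\algS_j=\{e_1,\dots,e_k\}$ and telescoping, the supermodularity ratio (Definition~\ref{def:super_ratio}) gives $\sum_{i=1}^k[g(\algS_j)-g(\algS_j\cup\{e_i\})]\ge\gamma_g[g(\algS_j)-g(\algS_j\cup\optS)]\ge\gamma_g\rho_j$. On the other hand, the greedy rule (line~\ref{lineLQGconstr:best_a_nonUniformCosts}) maximizes the cost-normalized marginal gain, so the chosen $s_{j+1}$ satisfies $\frac{g(\algS_j)-g(\algS_{j+1})}{\sensorCost(s_{j+1})}\ge\frac{g(\algS_j)-g(\algS_j\cup\{e_i\})}{\sensorCost(e_i)}$ for every $e_i$; multiplying by $\sensorCost(e_i)$, summing, and using $\sum_i\sensorCost(e_i)\le\sensorCost(\optS)=\optBudget$ (costs are nonnegative) yields $g(\algS_j)-g(\algS_{j+1})\ge\frac{\gamma_g\rho_j}{\optBudget}\,\sensorCost(s_{j+1})$. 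Therefore $\rho_{j+1}\le\rho_j\big(1-\gamma_g\sensorCost(s_{j+1})/\optBudget\big)\le\rho_j\exp(-\gamma_g\sensorCost(s_{j+1})/\optBudget)$.

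Telescoping this over $j=0,\dots,l-2$ gives $\rho_{l-1}\le\rho_0\exp(-\gamma_g\sensorCost(\algS_{l-1})/\optBudget)$, where $\algS_{l-1}=\algS\setminus\{s_l\}$ and $\sensorCost(\algS_{l-1})=\sum_{j=1}^{l-1}\sensorCost(s_j)$. Taking logarithms and rearranging yields $\sensorCost(\algS_{l-1})\le\frac{1}{\gamma_g}\log(\rho_0/\rho_{l-1})\,\optBudget$; adding $\sensorCost(s_l)$ and substituting $\rho_0=h[\emptyset,u\of{1:T}{\emptyset}]-\kappa$ and $\rho_{l-1}=h[\algS_{l-1},u\of{1:T}{\algS_{l-1}}]-\kappa$ reproduces ineq.~\eqref{ineq:cost_approx_bound} exactly. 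The running time follows by bookkeeping: the while loop executes at most $|\calV|$ times, each iteration evaluates the marginal gain of at most $|\calV|$ candidate sensors, and each evaluation runs one $T$-step Kalman covariance recursion at cost $O(Tn^{2.4})$, for a total of $O(|\calV|^2Tn^{2.4})$, which dominates the one-time Riccati recursion producing $\Theta_t,K_t$ and the final state estimation.

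The hard part will be the per-step recursion: one must invoke the supermodularity ratio on the \emph{telescoped} marginals over $\optS\setminus\algS_j$ (rather than on single elements), and pair this lower bound with the cost-normalized greedy inequality so that the optimal \emph{cost} $\optBudget$, not a cardinality, enters the exponent; this is precisely where the $1/\gamma_g$ factor and the transition from cardinality to cost constraints appear. A secondary subtlety is that the logarithm is well defined only because the loop guard forces $\rho_{l-1}>0$: I would remark that if instead the loop terminated through $\calV'=\emptyset$ the instance would be infeasible, so the bound presumes feasibility (equivalently $g(\calV)\le\kappa$) of Problem~\ref{prob:minCostLQG}.
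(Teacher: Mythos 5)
Your proposal is correct and follows essentially the same route as the paper's Appendix~E: your per-step inequality $g(\algS_j)-g(\algS_{j+1})\ge \frac{\gamma_g \sensorCost(s_{j+1})}{\optBudget}\,\rho_j$ is exactly the paper's Lemma~\ref{lemLQGconstr:gen_lemma_2} (proved the same way, via monotonicity, telescoping over $\optS\setminus\algS_j$ with the supermodularity ratio, the cost-normalized greedy rule, and $\sensorCost(\optS\setminus\algS_j)\le\optBudget$), and your feasibility and running-time arguments coincide with the paper's. The only difference is packaging: the paper aggregates via the inductive product bound of Lemma~\ref{lemLQGconstr:gen_lemma_3}, the minimum-of-products Lemma~\ref{lem:minimum_of_products}, and a final rearrangement using $h[\optS,u\of{1:T}{\optS}]\le\kappa$ (writing $h[\algS_{l-1},u\of{1:T}{\algS_{l-1}}]=(1+\epsilon)\kappa$), whereas you track the slack $\rho_j=g(\algS_j)-\kappa$ directly and apply $1-x\le e^{-x}$ at each step --- a mild streamlining that reaches the identical bound and also sensibly flags the $\calV'=\emptyset$ (infeasible-instance) termination case, which the paper leaves implicit.
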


\begin{myremark}[Novelty of algorithm and bound]\label{rem:novel2}
{Algorithm~\ref{algLQGconstr:overall_nonUniformCosts} is the first scalable algorithm for Problem~\ref{prob:minCostLQG}.  Importantly, Algorithm~\ref{algLQGconstr:greedy_nonUniformCosts}, used in Algorithm~\ref{algLQGconstr:overall_nonUniformCosts}, is the first scalable algorithm with suboptimality guarantees for the problem of \emph{minimal cost set selection} where a bound to an \emph{approximately} supermodular $g$ must be met.  Particularly, 
Algorithm~\ref{algLQGconstr:greedy_nonUniformCosts}, generalizes previous algorithms \cite{wolsey1982analysis} that  focus instead on  \emph{minimal cardinality} set selection subject to bounds on an \emph{exactly} supermodular function $g$ (in which case, $\gamma_g=1$).  Notably, for $\gamma_g=1$, 
ineq.~\eqref{ineq:cost_approx_bound}'s bound recovers the \mbox{guarantee established  in~\cite[Theorem~1]{wolsey1982analysis}.}}
\end{myremark}

All in all, ineq.~\eqref{ineq:approx_boundLQGconstr} implies Algorithm~\ref{algLQGconstr:overall_nonUniformCosts} returns a  solution to Problem~\ref{prob:minCostLQG} with the prescribed  \LQG performance.  And parallel to~ineq.~\eqref{ineq:approx_bound}, ineq.~\eqref{ineq:cost_approx_bound} implies for $\gamma_g>0$ that  Algorithm~\ref{algLQGconstr:overall_nonUniformCosts} achieves a close-to-optimal sensor cost.

\subsection{Conditions for $\gamma_g>0$}\label{sec:non-zero_ratio}

We provide control-theoretic conditions such that $\gamma_g$ is non-zero, in which case both Algorithm~\ref {alg:overall_nonUniformCosts} and Algorithm~\ref{algLQGconstr:overall_nonUniformCosts} guarantee a close-to-optimal performance.  Particularly, we first prove that  if $\sum_{t=1}^T\Theta_t\succ 0$, then $\gamma_g$ is non-zero.  {Afterwards, 
we show the condition holds true in all problem instances one typically encounters in the real-world.  Specifically, we prove  $\sum_{t=1}^T\Theta_t\succ 0$ holds whenever zero control would result in a suboptimal behavior for the system; that is, we prove $\sum_{t=1}^T\Theta_t\succ 0$ holds in all systems where \LQG control improves system performance.}

\begin{mytheorem}[Non-zero computable bound for the supermodularity ratio $\gamma_g$]\label{th:submod_ratio}
For any sensor $i \in \calV$, let $\bar{C}_{i,t} \triangleq V_{i,t}^{-1/2}{C}_{i,t}$ be the 
\validated{normalized}{\emph{whitened}} measurement matrix.
If the strict inequality $\sum_{t=1}^{T}\Theta_t\succ 0$ holds, then $\gamma_g\neq 0$. Additionally, 
if we assume  $\trace{\bar{C}_{i,t}\bar{C}_{i,t}\tran}=1$, and $\text{tr}[\Sigma\att{t}{t}(\emptyset)]\leq \lambda_\max^2[\Sigma\att{t}{t}(\emptyset)]$, then 
\begin{align}\label{ineq:sub_ratio_bound}
\begin{split}
\gamma_g\geq &\frac{\lambda_\min(\sum_{t=1}^T \Theta_t) }{\lambda_\max(\sum_{t=1}^T \Theta_t)}\frac{ \min_{t\in\{1,2,\ldots,T\}}\lambda_\min^2[\Sigma\att{t}{t}(\calV)] }{\max_{t\in\{1,2,\ldots,T\}}\lambda_\max^2[\Sigma\att{t}{t}(\emptyset)]}\\
&\;\;\;\dfrac{1+\min_{i\in\calV, t\in\until{T}}\lambda_\min[\bar{C}_{i} \Sigma\att{t}{t}(\calV) \bar{C}_{i}\tran]}{2+\max_{i\in\calV, t\in\until{T}}\lambda_\max[\bar{C}_{i} \Sigma\att{t}{t}(\emptyset) \bar{C}_{i}\tran]
}.
 \end{split}
\end{align}
\end{mytheorem}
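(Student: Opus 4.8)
The plan is to reduce everything to the sensing-only functional $f(\calS)\triangleq\sum_{t=1}^{T}\trace{\Theta_t\Sigma\att{t}{t}(\calS)}$. By Theorem~\ref{th:LQG_closed} (and the definition of $\bar{\kappa}$), $g(\calS)$ equals $f(\calS)$ plus a constant that does not depend on $\calS$; the constant cancels in every marginal difference, so $\gamma_g=\gamma_f$ and it suffices to study $f$. First I would record that $f$ is non-increasing: adding a sensor enlarges the stacked measurement matrix, so by the information monotonicity of the Kalman filter $\Sigma\att{t}{t}(\calS\cup\{v\})\preceq\Sigma\att{t}{t}(\calS)$ for every $t$, and since $\Theta_t=K_t\tran M_t K_t\succeq 0$ this gives $f(\calS\cup\{v\})\le f(\calS)$. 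Hence each marginal gain $f(\calS)-f(\calS\cup\{v\})=\sum_{t}\trace{\Theta_t[\Sigma\att{t}{t}(\calS)-\Sigma\att{t}{t}(\calS\cup\{v\})]}$ is nonnegative, every ratio in Definition~\ref{def:super_ratio} is well defined, and the task collapses to lower-bounding the numerator (the $\calA$-gain) and upper-bounding the denominator (the $\calB$-gain) for arbitrary $\calA\subseteq\calB$ and $v\in\calV\setminus\calB$.

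For the numerator I would isolate the single-step reduction. Writing the filtering update in information form, adding $v$ injects the whitened information $\bar{C}_{v,t}\tran\bar{C}_{v,t}$ at time $t$ on top of the predicted covariance; because the prediction is itself improved by $v$ at earlier times, the full reduction \emph{dominates} the reduction obtained by freezing the prediction at its $\calA$-value, which Sherman--Morrison gives exactly as
\[
\Sigma\att{t}{t}(\calA)-\Sigma\att{t}{t}(\calA\cup\{v\})\succeq \Sigma\att{t}{t}(\calA)\bar{C}_{v,t}\tran\left(\eye+\bar{C}_{v,t}\Sigma\att{t}{t}(\calA)\bar{C}_{v,t}\tran\right)\inv\bar{C}_{v,t}\Sigma\att{t}{t}(\calA).
\]
Replacing $\Sigma\att{t}{t}(\calA)\succeq\Sigma\att{t}{t}(\calV)$ in the outer factors and bounding the resolvent from below via $\Sigma\att{t}{t}(\calA)\preceq\Sigma\att{t}{t}(\emptyset)$ reduces this to a scalar multiple of $\bar{C}_{v,t}\tran\bar{C}_{v,t}$ carrying the factor $\lambda_\min^2[\Sigma\att{t}{t}(\calV)]$ and a resolvent term governed by $\lambda_\max[\bar{C}_{v,t}\Sigma\att{t}{t}(\emptyset)\bar{C}_{v,t}\tran]$.

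The denominator is where the real difficulty lies and is the \textbf{main obstacle}. Running the same computation at $\calB$ now requires an \emph{upper} bound on the full reduction, and the single-step term is only a lower bound: adding $v$ shrinks $\Sigma\att{t}{t}(\calB)$ both directly and through the propagated gains of every earlier update, so the temporal coupling of the Riccati recursion cannot be ignored. My plan is to control the propagated contribution by a universal multiple of the direct term, keeping the estimate free of the system dynamics $A_t,B_t$ (which, tellingly, do not appear in ineq.~\eqref{ineq:sub_ratio_bound}) and uniform over the added sensor $v$. I would use only the PSD ordering $\Sigma\att{t}{t}(\calV)\preceq\Sigma\att{t}{t}(\cdot)\preceq\Sigma\att{t}{t}(\emptyset)$ together with the normalization $\trace{\bar{C}_{i,t}\bar{C}_{i,t}\tran}=1$ and the hypothesis $\trace{\Sigma\att{t}{t}(\emptyset)}\le\lambda_\max^2[\Sigma\att{t}{t}(\emptyset)]$ to pass from traces to the stated extreme-eigenvalue quantities; the conservative resolvent bounds that also absorb this propagation are what turn the $1+$ of a pure single-step computation into the additive $2$ appearing in the bound.

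Finally I would assemble the ratio. The denominator is upper-bounded cleanly using $\Theta_t\preceq\sum_{t'}\Theta_{t'}$, so that $\lambda_\max(\sum_{t}\Theta_t)$ factors out. The numerator requires the dual step of extracting $\lambda_\min(\sum_{t}\Theta_t)$ as a lower-bound prefactor, and this is delicate precisely because each $\Theta_t=K_t\tran M_t K_t$ may be singular (the control gain can vanish at some times): one cannot argue time-by-time, and the aggregation over $t$ is exactly what makes $\sum_{t}\Theta_t\succ 0$ the natural hypothesis. Combining this conditioning factor with the covariance and whitening factors from the previous steps yields ineq.~\eqref{ineq:sub_ratio_bound}. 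For the qualitative claim, I would note that the same construction, \emph{before} imposing the simplifying normalization, already produces a lower bound on $\gamma_g$ whose only possibly-vanishing factor is the one governed by $\sum_{t}\Theta_t$; since the $V_{i,t}$ and the error covariances are positive definite, every remaining factor is strictly positive, so $\sum_{t}\Theta_t\succ 0$ forces $\gamma_g>0$.
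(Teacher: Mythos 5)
Your reduction to $f(\calS)=\sum_{t=1}^T\trace{\Theta_t\Sigma\att{t}{t}(\calS)}$ with $\gamma_g=\gamma_f$, the monotonicity observation, and the numerator bound all coincide with the paper's proof (Appendix~F): there too the marginal gain at the smaller set is split, via the Woodbury identity, into a nonnegative prediction-improvement part (discarded, using the monotonicity lemmas) plus the direct Sherman--Morrison term, whose outer factors are then relaxed to $\Sigma\att{t}{t}(\calV)$ and whose resolvent is controlled through $\Sigma\att{t}{t}(\emptyset)$; and $\lambda_\min(\sum_{t=1}^T\Theta_t)$ is extracted only after aggregating over $t$, which is also how the qualitative claim $\gamma_g\neq 0$ falls out.

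The gap is in your denominator step, exactly where you locate the main difficulty. You propose to dominate the dynamics-propagated part of the covariance decrease at $\calB$ by a universal, dynamics-free multiple of the direct term. No such bound exists: take $v$ with $\bar{C}_{v,t}=0$ for all $t\geq 2$; then the direct term vanishes for $t\geq 2$ while $\Sigma\att{t}{t}(\calB)-\Sigma\att{t}{t}(\calB\cup\{v\})$ is generically nonzero there, so time-by-time domination fails outright, and even after summing over $t$ the required constant scales with the amplification $A_{t-1}\cdots A_1$ (shrink $\Sigma\att{1}{1}$ and enlarge $A_1$), so it cannot be free of the dynamics. The paper sidesteps propagation with a much cruder inequality: applying Woodbury at $\calB$ (with $H_t\inv=\Sigma\att{t}{t}(\calB)$ and $\bar{H}_t\inv$ the covariance that includes $v$'s information only at times before $t$) gives $f_t(\calB)-f_t(\calB\cup\{v\})=\trace{\Theta_t H_t\inv}-\trace{\Theta_t\bar{H}_t\inv}+(\text{direct term})$, and it simply drops $-\trace{\Theta_t\bar{H}_t\inv}\leq 0$, i.e., it upper-bounds the marginal by the \emph{entire cost} $f_t(\calB)$ plus the direct term, which absorbs all temporal coupling trivially. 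This is precisely why the hypotheses $\trace{\bar{C}_{i,t}\bar{C}_{i,t}\tran}=1$ and $\trace{\Sigma\att{t}{t}(\emptyset)}\leq\lambda_\max^2[\Sigma\att{t}{t}(\emptyset)]$ appear: they bring the full-cost term $\trace{\Theta_t H_t\inv}$ to the same $\lambda_\max^2[\Sigma\att{t}{t}(\emptyset)]$ scale as the direct term, and the additive $2$ in ineq.~\eqref{ineq:sub_ratio_bound} is structurally the sum of these two contributions (full cost plus direct term), not resolvent conservatism absorbing propagation, as you conjecture. With the paper's cruder inequality substituted for your propagation-control step, the remainder of your outline goes through.
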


Ineq.~\eqref{ineq:sub_ratio_bound} suggests 
ways~$\gamma_g$ can increase, and, correspondingly, the bounds for Algorithm~\ref{alg:overall_nonUniformCosts} and of Algorithm~\ref{algLQGconstr:overall_nonUniformCosts} can improve:
when $\lambda_\min(\sum_{t=1}^T \Theta_t) /\lambda_\max(\sum_{t=1}^T \Theta_t)$ increases to $1$, then the right-hand-side in ineq.~\eqref{ineq:sub_ratio_bound} increases. {Therefore, since each $\Theta_t$ weight the states depending on their relevance for control purposes (Remark~\ref{rmk:interpretation}), the right-hand-side in ineq.~\eqref{ineq:sub_ratio_bound} increases when all the directions in the state space become equally important for control purposes. 
} \validated{To~see~this, for~example that}{Indeed, in the extreme case where} $\lambda_\max( \Theta_t)=\lambda_\min(\Theta_t)=\lambda$, the objective function in eq.~\eqref{eq:opt_sensors} becomes
\begin{align*}
\sum_{t=1}^{T}\text{tr}[\Theta_t\Sigma\att{t}{t}(\calS)]&=
\validated{}{\lambda \sum_{t=1}^{T}\text{tr}[\Sigma\att{t}{t}(\calS)]},
\end{align*}
\validated{Overall, 
it is {easier} for Algorithm~\ref{alg:overall_nonUniformCosts} to approximate a solution to Problem~\ref{alg:overall_nonUniformCosts} as the cost function in eq.~\eqref{eq:opt_sensors} becomes the cost function in the standard sensor selection problems where one minimizes the total estimation covariance as in eq.~\eqref{eq:interpretationRemark_kalman}.}{which matches the cost function in the classical sensor selection 
where all states are equally important (per eq.~\eqref{eq:interpretationRemark_kalman}). 
}


Theorem~\ref{th:submod_ratio} states $\gamma_g$ is non-zero whenever $\sum_{t=1}^{T} \Theta_t\succ 0$. 
To provide insight on the type of control problems for which this result holds,
 next we translate the technical condition $\sum_{t=1}^{T} \Theta_t\succ 0$ into an equivalent control-theoretic condition.

\begin{mytheorem}[\validated{Control-level condition for near-optimal sensor selection}{Control-theoretic condition for near-optimal co-design}] \label{th:freq}
Consider the 
{(noiseless, perfect state-information)} 
\LQG problem where at any $t=1,2,\ldots,T$, the state $x_t$ is known to each controller $u_t$ and the process noise $w_t$ is zero, i.e., the \validated{optimization}{optimal control} problem
\begin{equation}\label{pr:perfect_state}
\!\!\textstyle\min_{u_{1:T}}\sum_{t=1}^{T}\left.[\|x\at{t+1}\|^2_{Q_t} +\|u_{t}(x_t)\|^2_{R_t}]\right|_{\Sigma\att{t}{t}=W_t=0}.
\end{equation}

Let $A_t$  be invertible for all $t=1,2,\ldots,T$; the strict inequality $\sum_{t=1}^{T} \Theta_t\succ 0$ holds if and only if for all non-zero initial conditions~$x_1$, \validated{}{the all-zeroes control policy $u^{\circ}_{1:T}\triangleq(0,0,\ldots,0)$ is not an optimal solution to eq.~\eqref{pr:perfect_state}:}
\begin{align*}
u^{\circ}_{1:T} \!\notin \textstyle\arg\min_{u_{1:T}}\sum_{t=1}^{T}\!\left.[\|x\at{t+1}\|^2_{Q_t} +\|u_{t}(x_t)\|^2_{R_t}]\right|_{\Sigma\att{t}{t}=W_t=0}.
\end{align*}
\end{mytheorem}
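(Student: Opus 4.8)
The plan is to reduce the control-theoretic condition to a purely algebraic statement about the zero-control trajectory, and then to match that statement against $\sum_{t=1}^{T}\Theta_t\succ0$. First I would establish a completion-of-squares identity for the perfect-state, noiseless problem in eq.~\eqref{pr:perfect_state}. Using the backward recursion~\eqref{eq:control_riccati} and induction on the cost-to-go, I would show that for the trajectory $x_{t+1}=A_tx_t+B_tu_t$ generated by any control sequence $u_{1:T}$ from a fixed $x_1$,
\begin{equation*}
\sum_{t=1}^{T}\left[\|x_{t+1}\|^2_{Q_t}+\|u_t\|^2_{R_t}\right]=x_1\tran N_1 x_1+\sum_{t=1}^{T}\|u_t-K_tx_t\|^2_{M_t}.
\end{equation*}
The inductive step is the standard one: group $Q_t+N_{t+1}=S_t$, complete the square in $u_t$ using $B_t\tran S_tA_t=-M_tK_t$, and identify the residual quadratic form as $x_t\tran N_t x_t$ via the Woodbury identity, together with $\Theta_t=K_t\tran M_tK_t$. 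This identity simultaneously recovers the certainty-equivalent optimal policy $u_t=K_tx_t$ and the optimal value $x_1\tran N_1 x_1$.

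Next I would specialize the identity to $u^{\circ}_{1:T}=(0,\ldots,0)$. Along the autonomous trajectory $x_t=A_{t-1}\cdots A_1\,x_1$, the identity yields the exact excess cost of the all-zero policy over the optimum as $\sum_{t=1}^{T}x_t\tran\Theta_t x_t\geq 0$, where each $\Theta_t\succeq0$ since $M_t\succ0$. Hence, for a fixed $x_1$, the zero policy is optimal if and only if $\Theta_t x_t=0$ for every $t$. Writing the excess cost as the quadratic form $x_1\tran G x_1$ with $G\triangleq\sum_{t=1}^{T}(A_{t-1}\cdots A_1)\tran\Theta_t(A_{t-1}\cdots A_1)\succeq0$, it follows that the zero policy is suboptimal for every $x_1\neq0$ if and only if $G\succ0$. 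This disposes of the control-level side and turns the theorem into the matrix equivalence $\sum_{t=1}^{T}\Theta_t\succ0\iff G\succ0$.

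The last step, and the main obstacle, is precisely this equivalence, and it is where the invertibility of the $A_t$ must be used. I would phrase both sides through common kernels: $\sum_t\Theta_t\succ0$ iff no nonzero $v$ satisfies $K_tv=0$ for all $t$ (because $\Theta_t=K_t\tran M_tK_t$ with $M_t\succ0$), whereas $G\succ0$ iff no nonzero $x_1$ has $K_t(A_{t-1}\cdots A_1)x_1=0$ for all $t$. To bridge the two I would lean on the Riccati structure rather than on the kernels in isolation: the identity $\Theta_t=A_t\tran S_tA_t-N_t$ (again from Woodbury), the relation $\ker N_t=A_t^{-1}(\ker Q_t\cap\ker N_{t+1})$, and the inclusion $\ker N_t\subseteq\ker\Theta_t$ let one transport kernel information forward along the invertible maps $A_t$, so that a direction annihilated by every $K_t$ exists if and only if the autonomous trajectory can be trapped inside the kernels $\ker\Theta_t$. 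Invertibility of each $A_t$ is essential here, as it is what permits moving between ``a fixed vector annihilated by all $K_t$'' and ``a trajectory annihilated step by step.''

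I expect the forward implication---ruling out a trapped nonzero autonomous trajectory when $\sum_t\Theta_t\succ0$---to be the delicate direction, requiring careful bookkeeping of how $\ker S_t$ propagates through the recursion, and it is also the point at which any additional structural hypotheses on $(A_t,B_t,Q_t)$ (e.g.\ reachability or definiteness of the $Q_t$) would have to be invoked if the bare invertibility of the $A_t$ proves insufficient. Once the matrix equivalence is in hand, the conclusion is immediate: I would restate $\sum_t\Theta_t\succ0$ in the control-theoretic language of eq.~\eqref{pr:perfect_state}, namely that for every nonzero initial condition $x_1$ the all-zero policy $u^{\circ}_{1:T}$ fails to be optimal.
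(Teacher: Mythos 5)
Your first two steps are correct and, in substance, coincide with the paper's reduction. The completion-of-squares identity you propose, specialized to $u^{\circ}_{1:T}$, yields exactly the telescoping identity the paper derives via Lemmata~\ref{lem:theta_formula} and~\ref{lem:observability_condition}, namely
\begin{equation*}
\textstyle\sum_{t=1}^{T}U_t\tran\Theta_t U_t \;=\; \sum_{t=1}^{T}A_1\tran\cdots A_t\tran Q_t A_t\cdots A_1 \;-\; N_1,
\qquad U_t\triangleq A_{t-1}\cdots A_1,\; U_1=\eye,
\end{equation*}
and, combined with the observation (the paper's Lemma~\ref{prop:iff_for_all_zero_control}, via Lemma~\ref{lem:LQG_closed}) that the optimal value of eq.~\eqref{pr:perfect_state} is $x_1\tran N_1 x_1$, it reduces the theorem to the matrix equivalence $G\triangleq\sum_{t=1}^{T}U_t\tran\Theta_t U_t\succ 0\iff\sum_{t=1}^{T}\Theta_t\succ 0$. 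Up to packaging (excess-cost identity versus explicit zero-control cost plus telescoping), this is the same route as the paper.

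The genuine gap is your third step: you do not prove this equivalence, you only sketch a kernel-transport plan, and the plan as described would not go through. Its key ingredients are unsubstantiated --- for instance, the claimed relation $\ker N_t=A_t\inv(\ker Q_t\cap\ker N_{t+1})$ is never derived, and under the paper's recursion~\eqref{eq:control_riccati}, where $S_t\inv$ appears (so $S_t\succ 0$ implicitly), one has $N_t=A_t\tran(S_t\inv+B_tR_t\inv B_t\tran)\inv A_t\succ 0$ whenever $A_t$ is invertible, so $\ker N_t$ is trivial and carries no usable information; likewise the inclusion $\ker N_t\subseteq\ker\Theta_t$ is asserted without proof. You yourself concede that ``additional structural hypotheses'' might be needed, which is an admission that the decisive direction is open in your write-up. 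The paper, by contrast, closes this step with a self-contained argument (Lemma~\ref{lem:sum_theta}): for nonzero $z$ it writes $\sum_{t=1}^{T}z\tran\Theta_t z=\sum_{t=1}^{T}\trace{\phi_t\phi_t\tran U_t\tran\Theta_t U_t}$ with $\phi_t\triangleq U_t\inv z$, selects a Loewner-minimal outer product $\phi_{t'}\phi_{t'}\tran$, and lower-bounds by $\|\phi_{t'}\|_2^2\,\lambda_\min\bigl(\sum_{t=1}^{T}U_t\tran\Theta_t U_t\bigr)>0$, arguing the converse symmetrically with $\xi_t\triangleq U_tz$; invertibility of the $A_t$ enters only through these changes of variables, not through kernel propagation. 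Since everything in Theorem~\ref{th:freq} beyond standard LQG identities lives in precisely this equivalence, your proposal is incomplete at its mathematical core. (Your hedge that bare invertibility may not suffice does point at a real delicacy --- the existence of a Loewner-minimal $\phi_{t'}\phi_{t'}\tran$ among generally incomparable rank-one matrices, on which the paper's Lemma~\ref{lem:sum_theta} leans, is itself a nontrivial step --- but flagging a difficulty is not the same as resolving it, and your sketch supplies no substitute argument.)
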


{
Theorem~\ref{th:freq} suggests $\sum_{t=1}^{T} \Theta_t\succ 0$ 
holds if and only if for any non-zero initial condition~$x_1$ the all-zeroes control policy $u^{\circ}_{1:T}=(0,0,\ldots,0)$ is suboptimal for the noiseless, perfect state-information LQG problem.  Intuitively, this encompasses most practical control design problems 
where a zero controller would result in a suboptimal behavior of the system (\LQG control design itself would be unnecessary in the case where a zero controller, i.e., no control action, can \mbox{already attain the desired system performance}).
} 

Overall, Algorithm~\ref{alg:overall_nonUniformCosts} and Algorithm~\ref{algLQGconstr:overall_nonUniformCosts} are the first scalable algorithms for Problem~\ref{prob:LQG} and  Problem~\ref{prob:minCostLQG}, respectively, and 
they achieve non-vanishing per-instance performance guarantees.


\section{Numerical Evaluations}\label{sec:exp}

We consider two applications for the \LQG control and sensing co-design framework:
\emph{formation control}  and 
\emph{autonomous navigation}. 
We present a Monte Carlo analysis for both, which demonstrates: 
(i) the proposed sensor selection strategy is near-optimal; particularly, the resulting \LQG cost  matches the optimal selection in all instances for which the optimal could be computed via a brute-force approach;
(ii) a more naive selection which attempts to minimize the state estimation covariance~\cite{jawaid2015submodularity}
(rather than the \LQG cost) has degraded \LQG performance, often comparable to a random selection;
(iii) in the considered instances, a clever selection of a small subset of sensors can ensure an \LQG cost that is 
close to the one obtained by using all available sensors, hence providing an effective alternative for control under 
sensing constraints.

\newcommand{\mpw}{4.5cm}
\begin{figure}[t]
\hspace{-4mm}
\begin{minipage}{\textwidth}
\begin{tabular}{cc}%
\begin{minipage}{3.5cm}%
\centering
\includegraphics[width=.965\columnwidth]{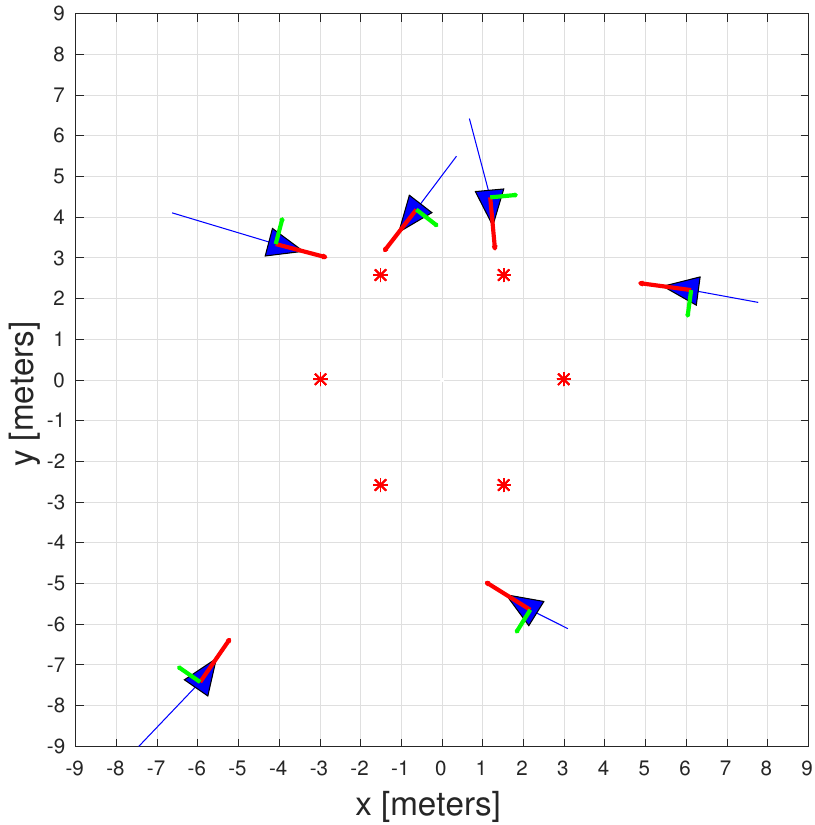} \\
(a) formation control 
\end{minipage}
& \hspace{-5mm}
\begin{minipage}{5.5cm}%
\centering%
\includegraphics[width=1.05\columnwidth,trim=0cm 1cm 0cm 2cm,clip]{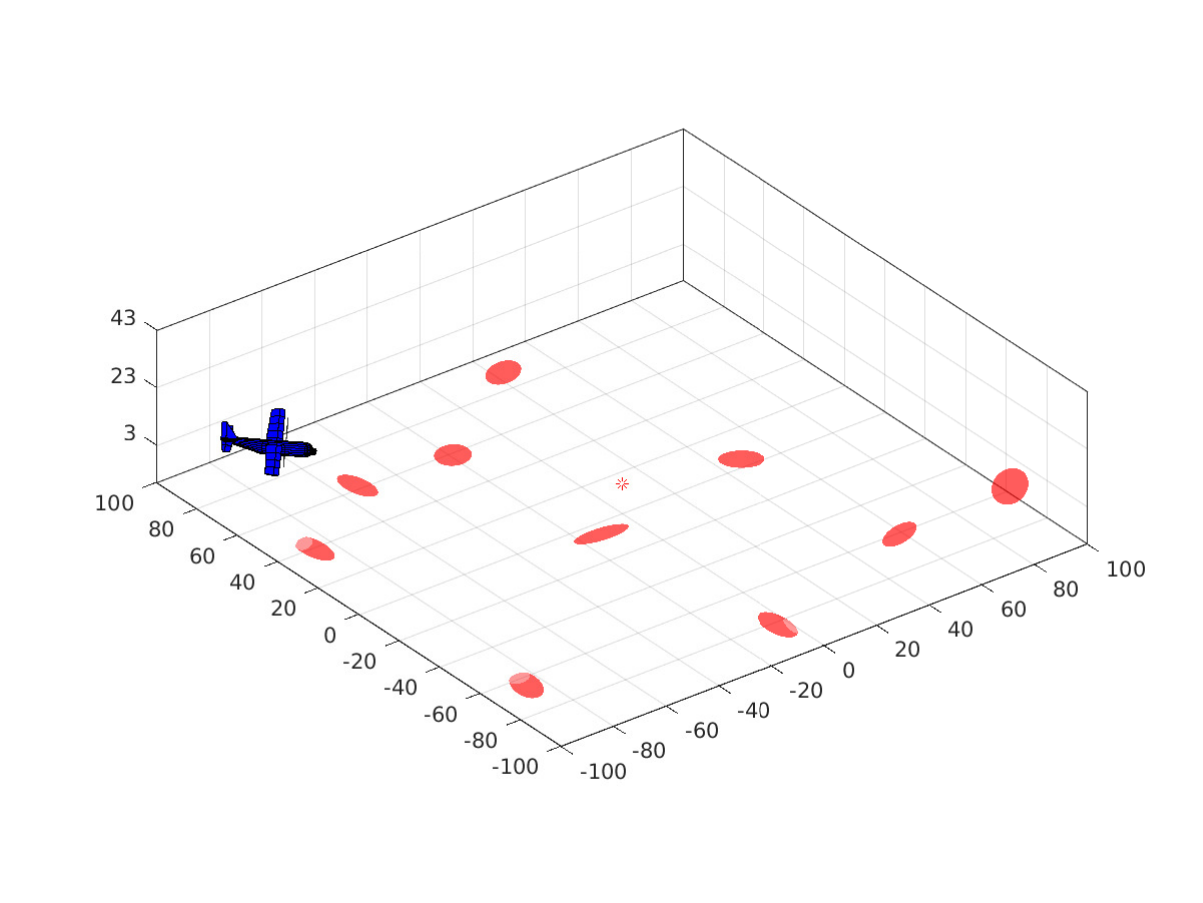} \\
(b) robot navigation
\end{minipage}
\end{tabular}
\end{minipage}%
\vspace{-1mm}
\caption{\label{fig:applications}
Applications of the \LQG control and sensing co-design framework. 
}
\end{figure}

\subsection{Sensing-constrained formation control}\label{sec:exp-formationControl}

\myParagraph{Simulation setup}  The application scenario is illustrated in \myFigure{fig:applications}(a).
A team of $\nrRobots$ agents (blue triangles) moves in 2D.
At $t=1$, the agents are randomly deployed in a $10\rm{m} \times 10\rm{m}$ square. 
Their objective is to reach a target formation shape 
 (red stars); in \myFigure{fig:applications}(a) the desired
 formation has an hexagonal shape, while in general for a formation of $\nrRobots$, the desired 
 formation is an equilateral polygon with $\nrRobots$ vertices. 
 Each robot is modeled as a double-integrator, with state $x_i = [p_i \; v_i]\tran \in \Real{4}$  
 ($p_i$ is agent $i$'s position, and $v_i$ its velocity), and can control its acceleration 
 $u_i \in \Real{2}$. The process noise is a diagonal matrix $W = \diag{[1e^{-2}, \; 1e^{-2}, \; 1e^{-4}, \; 1e^{-4}]}$. 
 Each robot $i$ is equipped with a GPS, which measures the agent position $p_i$ with a covariance 
 $V_{gps,i} = 2\cdot\eye_2$. Moreover, the agents are equipped with lidars allowing 
 each agent~$i$ to measure the relative position of another agent~$j$ with covariance 
 $V_{lidar,ij} = 0.1\cdot\eye_2$. 
 The agents have limited on-board resources, hence they want to activate only $k$ sensors.

For our tests, we consider two setups. In the first, named \emph{homogeneous formation control}, 
the \LQG weight matrix $Q$ is a block diagonal matrix with $4\times 4$ blocks, 
and each block $i$ chosen as $Q_i = 0.1 \cdot\eye_4$;
since each block of~$Q$ weights equally the tracking error of a robot, in the homogeneous case
the tracking error of all agents is equally important.
In~the second setup, named \emph{heterogeneous formation control},  $Q$ 
is chose as above, except for one of the agents, say robot 1, for which we choose 
$Q_1 = 10 \cdot\eye_4$; this setup models the case in which 
each agent has a different role or importance, hence one weights differently the 
tracking error of the agents.  In~both cases the matrix $R$ is chosen to be the identity matrix. 
The simulation is carried on over $T$ time steps, and $T$ is also chosen as \LQG horizon.
Results are averaged over 100 Monte Carlo runs: at each run we randomize the 
initial estimation covariance {$\initialCovariance$}.

\myParagraph{Compared techniques}
We compare five techniques. All techniques use an \LQG-based estimator and controller, and they 
only differ by the selections of the  active sensors.
The~first approach is the optimal sensor selection, denoted as \toptimal, which 
attains the minimum in eq.~\eqref{eq:opt_sensors}, and which we compute by enumerating all 
possible subsets.
The second approach is a pseudo-random sensor selection, denoted as \trandom, 
 which selects all the GPS measurements and a random subset of the lidar measurements.
The third approach, denoted as \tlogdet, selects sensors so to minimize the average $\logdet$ of the estimation covariance 
over the horizon; this approach resembles~\cite{jawaid2015submodularity} and is agnostic to the control task.
The fourth approach is the proposed sensor selection strategy (Algorithm~\ref{alg:greedy_nonUniformCosts}), and is denoted as \tslqg.
Finally, we also report the \LQG performance when all sensors are selected.
This approach is denoted as \tallSensors.

\myParagraph{Results}
The results of the numerical analysis are reported in \myFigure{fig:formationControlStats}.
When not specified otherwise, we consider a formation of $\nrRobots = 4$ agents, 
which can only use a total of $k=6$ sensors, and a control horizon $T=20$. 
\myFigure{fig:formationControlStats}(a) shows the \LQG cost for the homogeneous case and for increasing horizon. 
We note that, in all tested instance, the proposed approach \tslqg matches the optimal selection \toptimal, and both 
approaches are relatively close to \tallSensors, which selects all the 
 available sensors. On the other hand, \tlogdet leads to worse tracking 
 performance, and is often close to \trandom.
These considerations are confirmed by the heterogeneous setup, in \myFigure{fig:formationControlStats}(b).
In this case, the separation between our proposed approach and \tlogdet becomes even larger;
 the intuition is that the heterogeneous case rewards differently the tracking errors at different agents, 
 hence while \tlogdet attempts to equally reduce the estimation error across the formation, the proposed approach 
 \tslqg selects sensors in a task-oriented fashion, since the matrices $\Theta_t$ for all $\allT$ in the cost function in eq.~\eqref{eq:opt_sensors}
 incorporate the \LQG weight matrices.

\myFigure{fig:formationControlStats}(c) shows the \LQG cost attained for increasing 
number of selected sensors $k$ and for the homogeneous case.   
For increasing number of sensors all techniques converge to \tallSensors (since the entire ground set is selected).
\myFigure{fig:formationControlStats}(d) shows the same statistics for the heterogeneous case. 
Now, \tslqg matches \tallSensors earlier, starting at $k = 7$;
intuitively, in the heterogeneous case, adding more sensors may have 
marginal impact on the \LQG cost (e.g., if the cost rewards a small tracking error for robot 1, it may be 
of little value to take a lidar measurement between robot 3 and 4). This further stresses the importance of the 
proposed framework as a parsimonious way to control a system with minimal resources.

\myFigure{fig:formationControlStats}(e) 
and \myFigure{fig:formationControlStats}(f) show the \LQG cost attained by the compared techniques for increasing 
number of agents.
\toptimal quickly becomes intractable to compute, hence we omit values beyond $\nrRobots = 4$.
In both figures, the separation among the techniques increases with the number of agents,
since the set of available sensors quickly increases with $n$. In the heterogeneous case  
\tslqg remains relatively close to \tallSensors, implying that for the purpose of \LQG control, using a cleverly selected
 small subset of  sensors still ensures excellent tracking performance.


\newcommand{\resultsFolderFormationControl}{code/results-formationControl-gpsAndRandom-100-randInitCov-newLQG}
\newcommand{\myhspace}{\hspace{-2mm}}

\renewcommand{\mpw}{4.5cm}
\begin{figure}[t]
\myhspace\myhspace
\begin{minipage}{\textwidth}
\begin{tabular}{cc}%
\myhspace
\begin{minipage}{\mpw}%
\centering
\includegraphics[width=1.05\columnwidth]{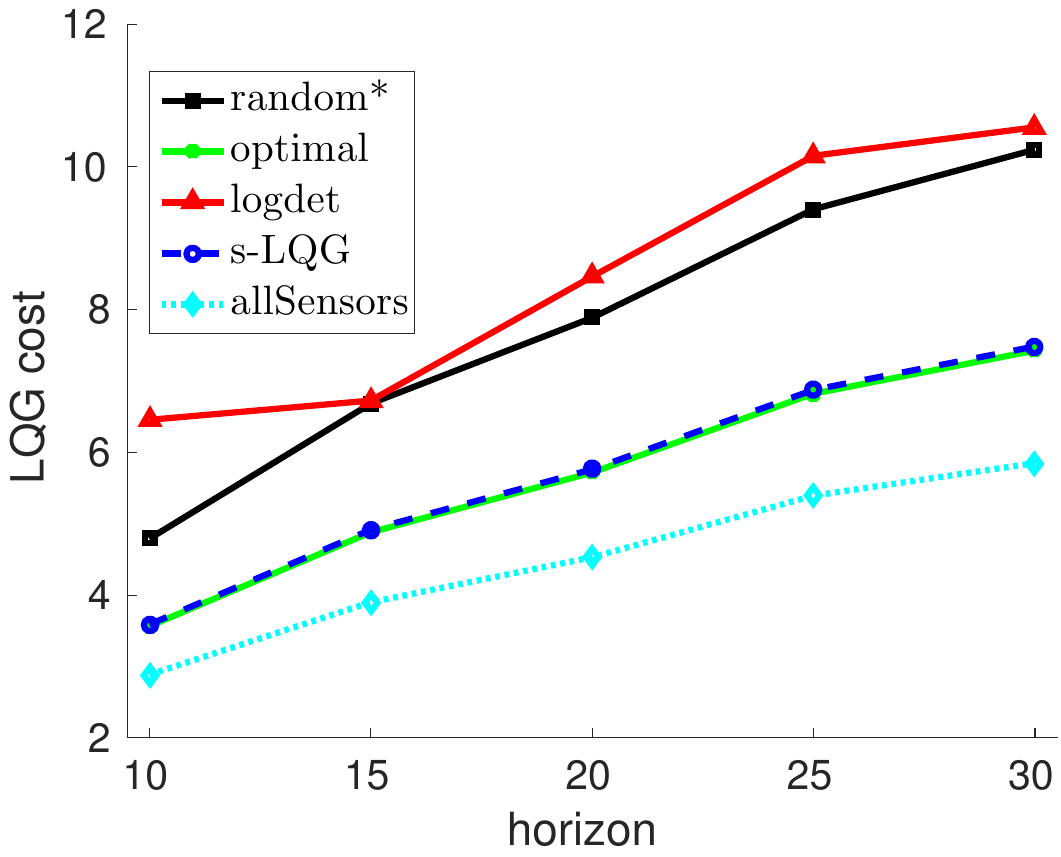} \\
(a) homogeneous 
\end{minipage}
& \myhspace
\begin{minipage}{\mpw}%
\centering%
\includegraphics[width=1.05\columnwidth]{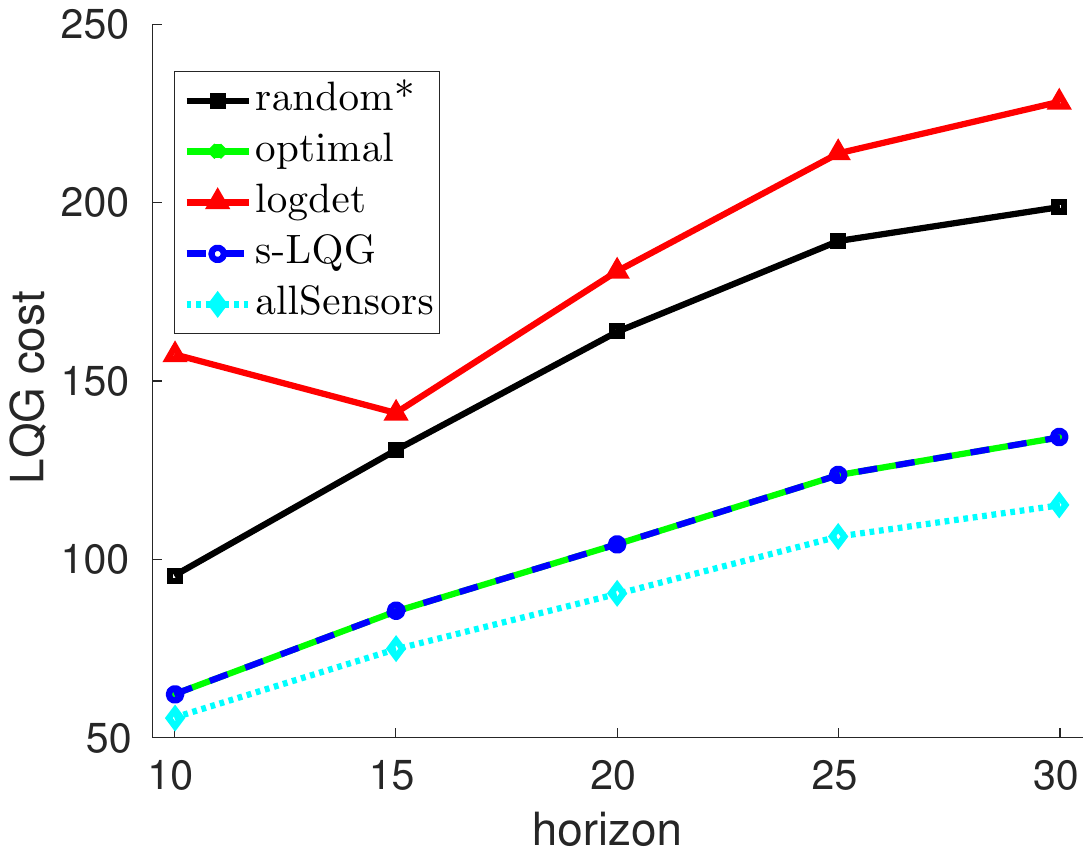} \\
(b) heterogeneous 
\end{minipage}
\\
\myhspace
\begin{minipage}{\mpw}%
\centering
\includegraphics[width=1.05\columnwidth]{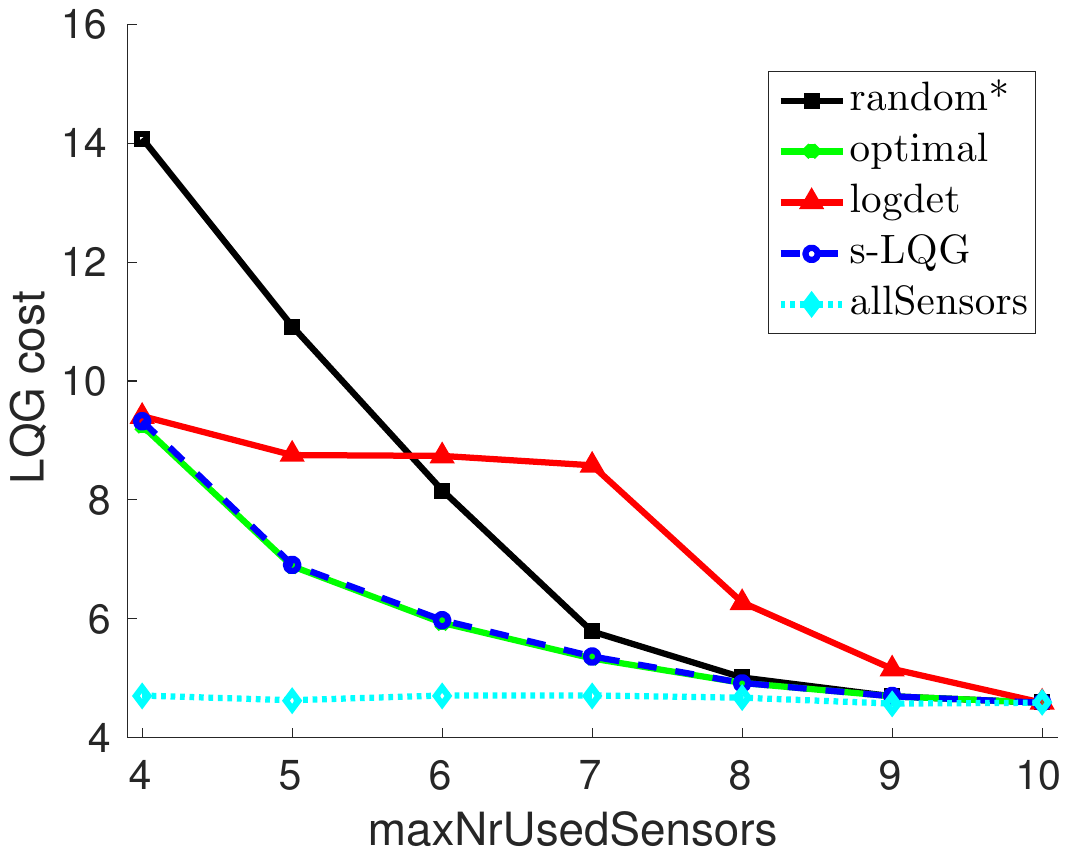} \\
(c) homogeneous 
\end{minipage}
& \myhspace
\begin{minipage}{\mpw}%
\centering%
\includegraphics[width=1.05\columnwidth]{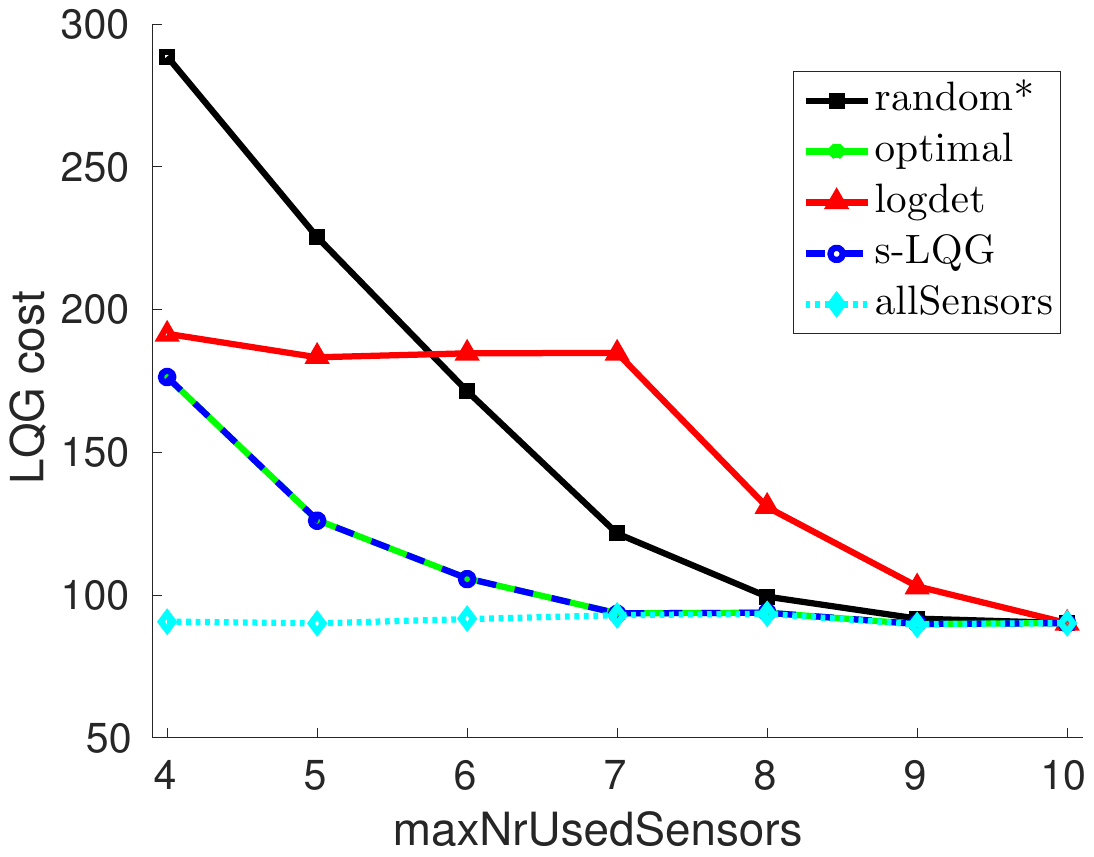} \\
(d) heterogeneous 
\end{minipage}
\\
\myhspace
\begin{minipage}{\mpw}%
\centering
\includegraphics[width=1.05\columnwidth]{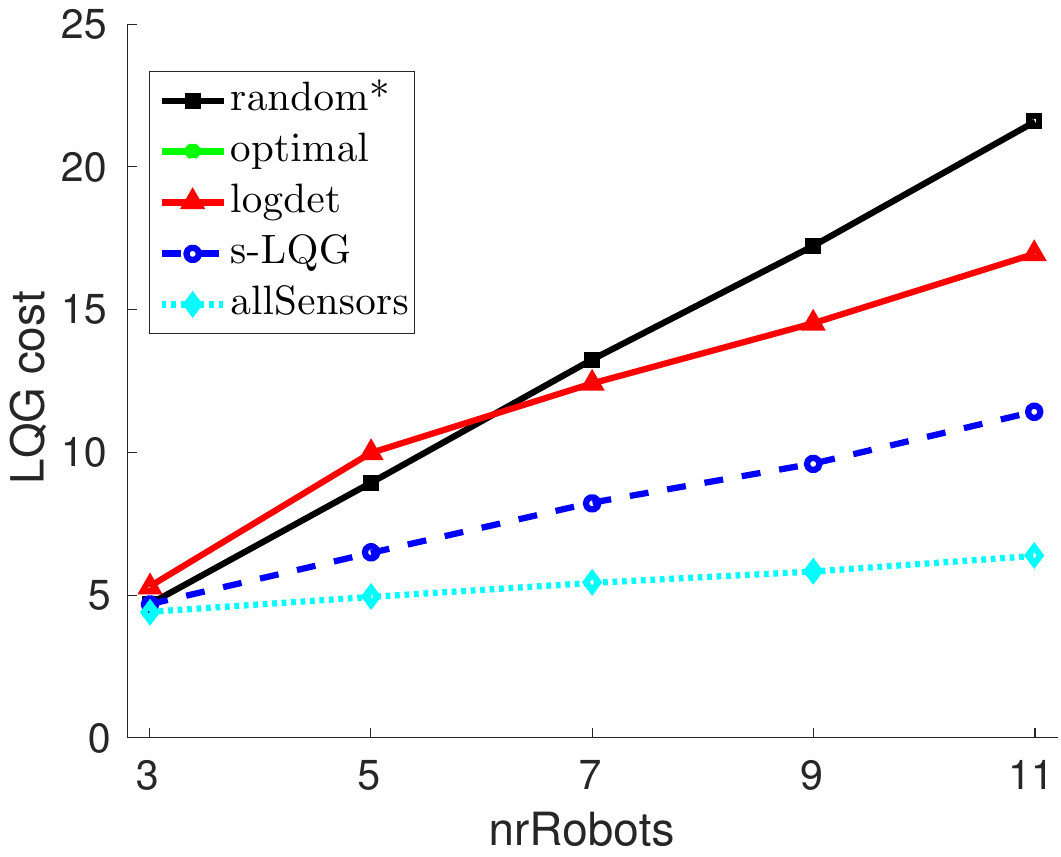} \\
(e) homogeneous 
\end{minipage}
& \myhspace
\begin{minipage}{\mpw}%
\centering%
\includegraphics[width=1.05\columnwidth]{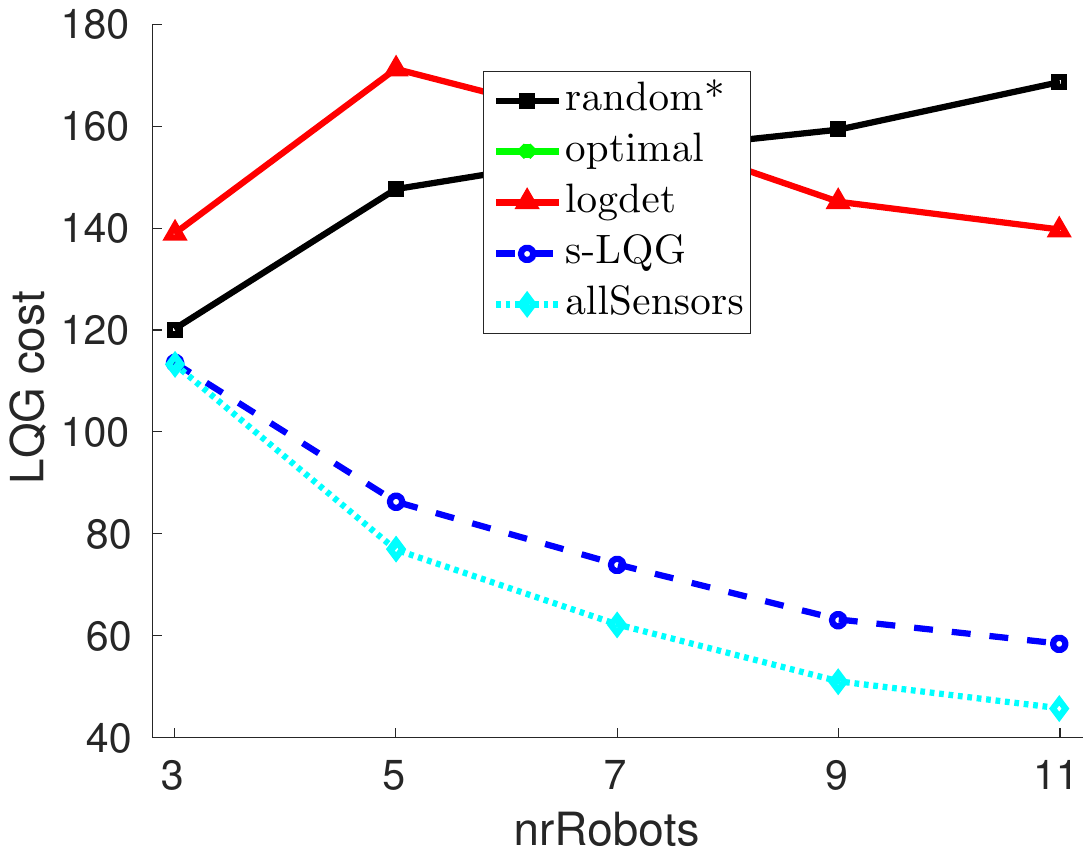} \\
(f) heterogeneous 
\end{minipage}
\end{tabular}
\end{minipage}%
\vspace{-1mm}
\caption{\label{fig:formationControlStats}
\LQG cost for increasing (a)-(b) control horizon $T$, (c)-(d) number of used 
sensors $k$ (all sensors are considered to have sensor-cost~1), and (e)-(f) number of agents $n$.  Statistics are reported for the homogeneous 
formation control setup (left column), and the heterogeneous setup (right column).
}\vspace{-5mm}
\end{figure}

\subsection{Resource-constrained robot navigation}\label{sec:exp-flyingRobot}

\myParagraph{Simulation setup}  The second application scenario is illustrated in \myFigure{fig:applications}(b).
An unmanned aerial robot (UAV) moves in a 3D space, starting from a
randomly selected location. 
The objective of the UAV is to land, and specifically,  to reach 
 $[0,\;0,\;0]$ with zero velocity. 
 The UAV is modeled as a double-integrator, with state $x = [p \; v]\tran \in \Real{6}$  
 ($p$ is the position, while $v$ its velocity), and can control its acceleration 
 $u \in \Real{3}$.  The process noise is $W = \eye_6$. 
 The UAV is equipped with multiple sensors. It has an on-board GPS, measuring the 
 UAV position  $p$ with a covariance $2 \cdot\eye_3$, 
and an altimeter, measuring only the last component of $p$ (altitude) with standard deviation $0.5\rm{m}$. 
Moreover, the UAV 
can use a stereo camera to measure the relative position of $\ell$ landmarks on the ground;
 we assume the location of each landmark to be known 
 approximately, and we associate to each landmark an uncertainty covariance
(red ellipsoids in \myFigure{fig:applications}(b)), which is randomly generated at the 
beginning of each run.
 The UAV has limited on-board resources, hence it wants to use only a few of sensing modalities.
For instance, the resource-constraints may be due to the power consumption of the GPS and the altimeter,
or may be due to computational constraints that prevent to run multiple object-detection algorithms to detect all landmarks on the ground.  We consider two sensing-constrained scenarios: (i)~all sensors to have the same cost (equal to $1$), in which case, the UAV can activate at most $k$ sensors; (ii)~the sensors to have heterogeneous costs: particularly, the  GPS's cost  is set equal to $3$; the altimeter's cost is set equal to 2; and each landmark's cost is set equal to $1$.

We use $Q = \diag{[1e^{-3},\; 1e^{-3},\;10,\; 1e^{-3},\; 1e^{-3},\; 10]}$ 
and $R = \eye_3$. The structure of $Q$ reflects the fact that during landing 
we are particularly interested in controlling the vertical direction and the vertical velocity 
(entries with larger weight in $Q$), while we are less interested in controlling accurately the 
horizontal position and velocity (assuming a sufficiently large landing site).
In the following, we present results averaged over 100 Monte Carlo runs: in each run, we~randomize the 
covariances describing the landmark position uncertainty.

\myParagraph{Compared techniques}
We consider the five techniques discussed in the previous section.

\myParagraph{Results}
The results of our numerical analysis are reported in \myFigure{fig:FlyingRobotStats} for the case where all sensors have the same sensor-cost, and in \myFigure{fig:heteregenousCost} for the case where sensors have different costs.
When not specified otherwise, we consider a total of $k=3$ sensors to be selected, and a control horizon $T=20$. 


\newcommand{\resultsFolderFlyingRobot}{code/results-flyingRobot-gpsAndRandom-100-randInitCov-newLQG}
\renewcommand{\myhspace}{\hspace{-2mm}}

\renewcommand{\mpw}{4.5cm}
\begin{figure}[t]
\myhspace\hspace*{-1mm}
\begin{minipage}{\textwidth}
\begin{tabular}{cc}%
\myhspace
\begin{minipage}{\mpw}%
\centering
\includegraphics[width=1\columnwidth]{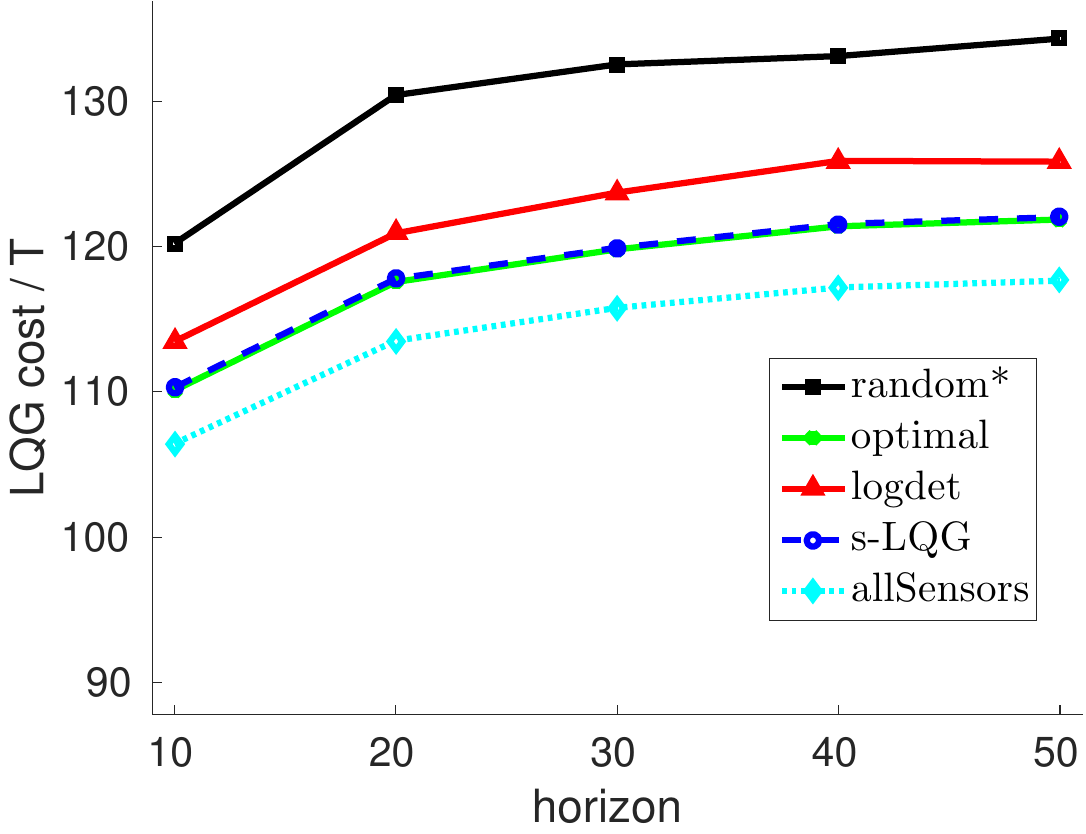} \\
(a) heterogeneous 
\end{minipage}
& \myhspace
\begin{minipage}{\mpw}%
\centering%
\includegraphics[width=1\columnwidth]{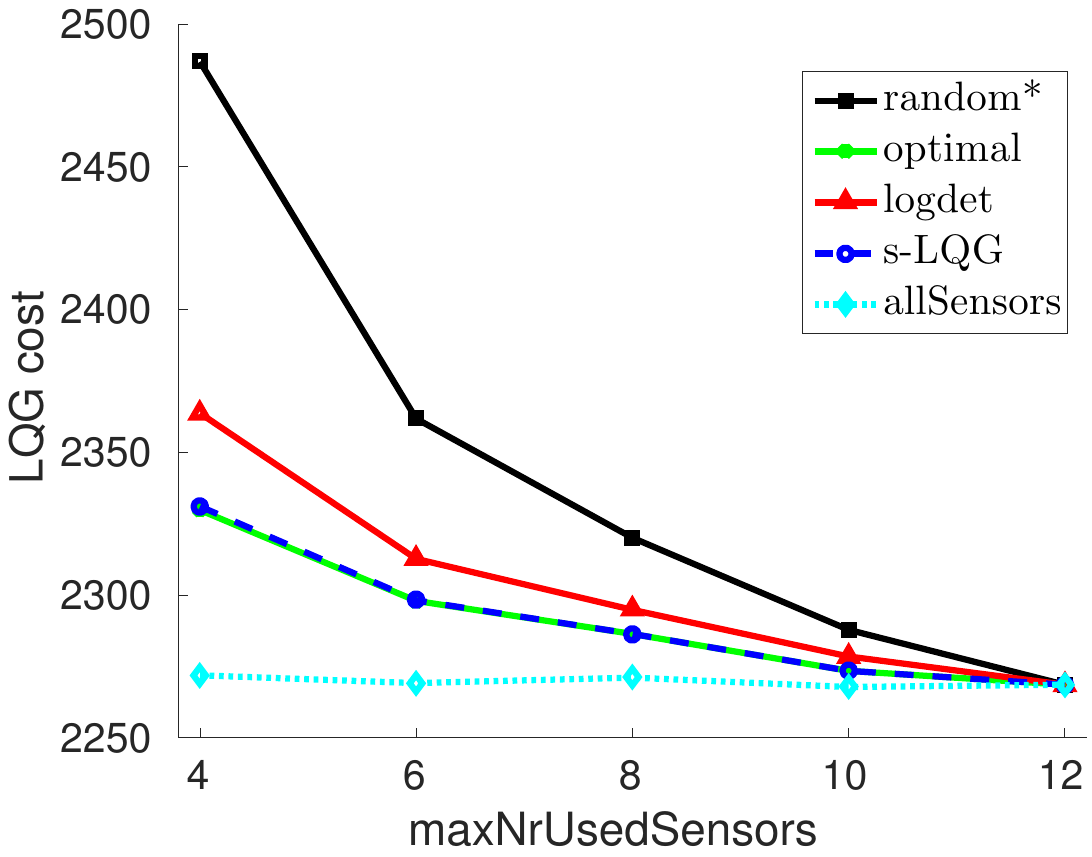} \\
(b) heterogeneous 
\end{minipage}
\end{tabular}
\end{minipage}%
\vspace{-1mm}
\caption{\label{fig:FlyingRobotStats}
\LQG cost for increasing (a) horizon $T$\!, and (b) number of used 
sensors~$k$ (all sensors are considered to have cost~1).  
}\vspace{-5mm}
\end{figure}

In \myFigure{fig:FlyingRobotStats}(a) 
we plot the \LQG cost normalized by the horizon, which makes more visible 
the differences among the techniques. Similarly to the formation control example, \tslqg matches the optimal selection \toptimal, 
while \tlogdet and \trandom have suboptimal performance. 
\myFigure{fig:FlyingRobotStats}(b) shows the \LQG cost attained by the compared techniques for increasing 
number of selected sensors $k$. All techniques converge to \tallSensors for increasing $k$, but in the 
regime in which few sensors are used \tslqg still outperforms alternative sensor selection schemes, and matches \toptimal.

\myFigure{fig:heteregenousCost} shows the \LQG cost attained by the compared techniques for increasing control horizon and various sensor cost budgets~$b$.    
Similarly to \myFigure{fig:FlyingRobotStats}, \tslqg has the same performance as \toptimal, 
whereas \tlogdet and \trandom have suboptimal performance. Notably, for $b=15$ all sensors can be chosen; for this reason in \myFigure{fig:heteregenousCost}(d) all compared techniques (but the random) have the same performance.

\section{Concluding Remarks}\label{sec:con}

{We addressed {an} \LQG control and sensing co-design problem, where one jointly designs control and sensing policies under resource constraints.
The problem is central in modern IoT and IoBT control applications, ranging from large-scale networked systems to miniaturized robotic networks.
{Motivated by the inapproximability of the problem}, we provided the first scalable algorithms with {per-instance} suboptimality bounds.  Importantly, the bounds are non-vanishing under general control-theoretic conditions, encountered in most real-world systems.
To this end, we also extended 
the literature on supermodular optimization: {by providing scalable algorithms for optimizing approximately supermodular functions subject to heterogeneous cost constraints; and by providing novel suboptimality bounds that improve the known bounds even for exactly supermodular optimization. }

The paper opens several avenues for future research. 
First, {the development of distributed implementations of the proposed algorithms would offer computational speedups.} 
Second, other co-design problems are interesting to be explored, such as the co-design of control-sensing-actuation. 
Third, while we provide bounds on an approximate sensor design against  optimal design, 
one could provide bounds against
the case where all sensors are used~\cite{siam2018ecc}.
Finally, in adversarial or failure-prone scenarios, one must account for sensor failures; 
to this end, one could leverage recent results on \emph{robust combinatorial optimization}~\cite{tzoumas2017resilient}.
}


\renewcommand{\myhspace}{\hspace{-1mm}}

\renewcommand{\mpw}{4.5cm}
\begin{figure}[t]
\myhspace\myhspace
\begin{minipage}{\textwidth}
\begin{tabular}{cc}%
\myhspace
\begin{minipage}{\mpw}%
\centering
\includegraphics[width=1.05\columnwidth]{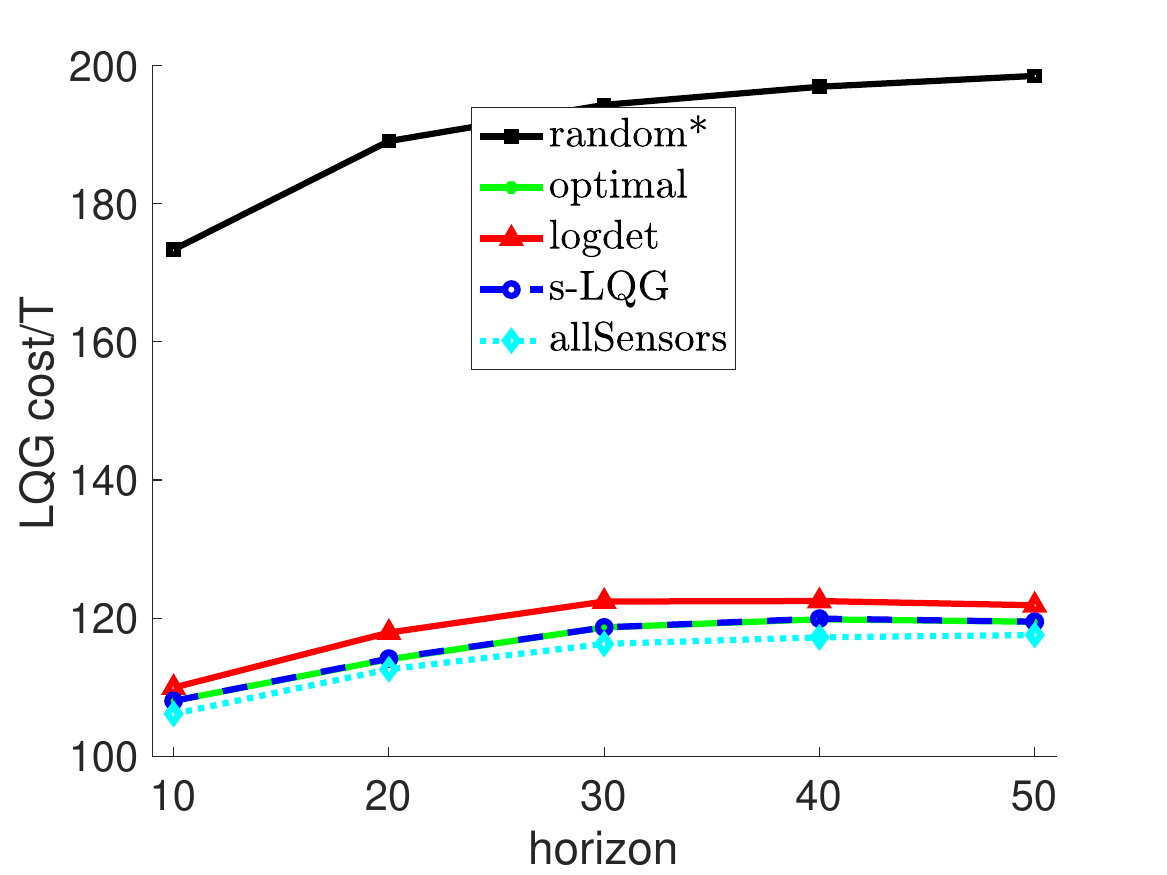} \\
(a)  budget $b=6$
\end{minipage}
& \myhspace\myhspace
\begin{minipage}{\mpw}%
\centering%
\includegraphics[width=1.05\columnwidth]{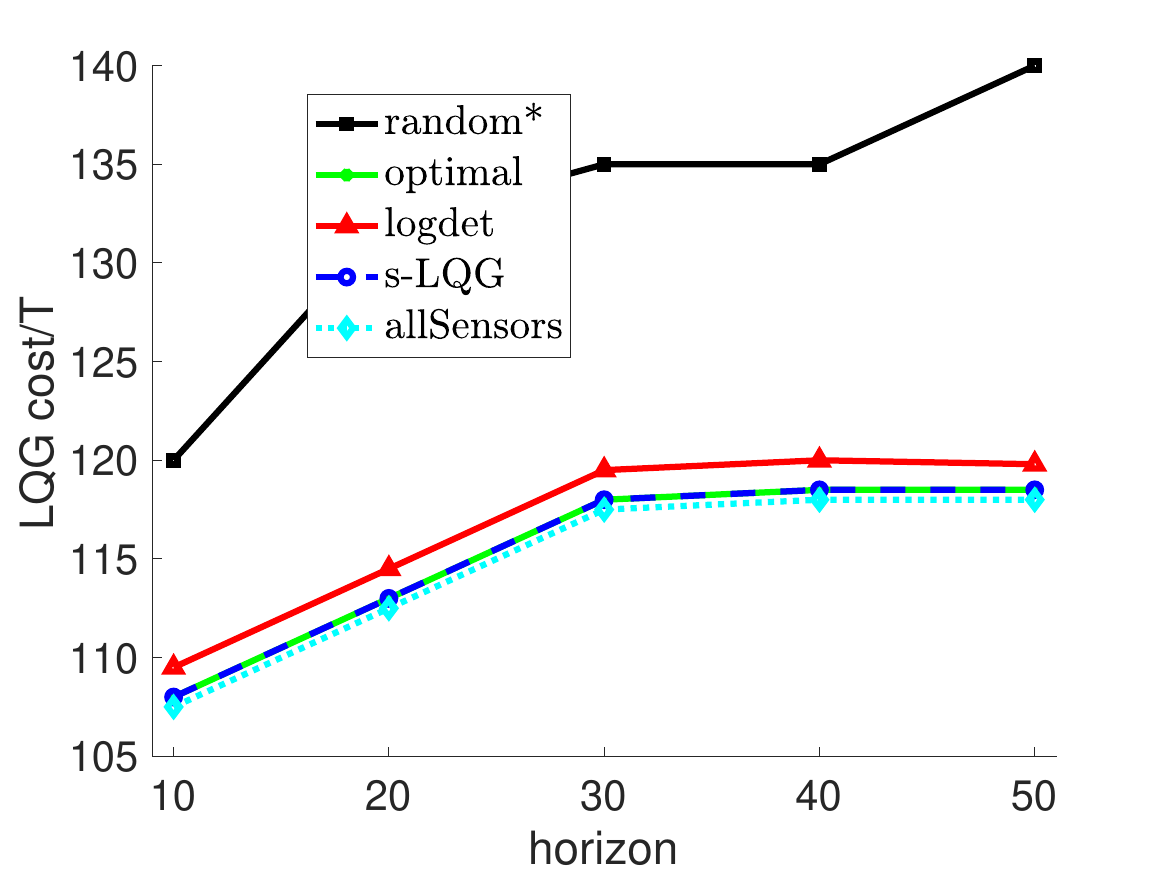} \\
(b)  budget $b=8$ 
\end{minipage}
\\
\myhspace
\begin{minipage}{\mpw}%
\centering
\includegraphics[width=1.05\columnwidth]{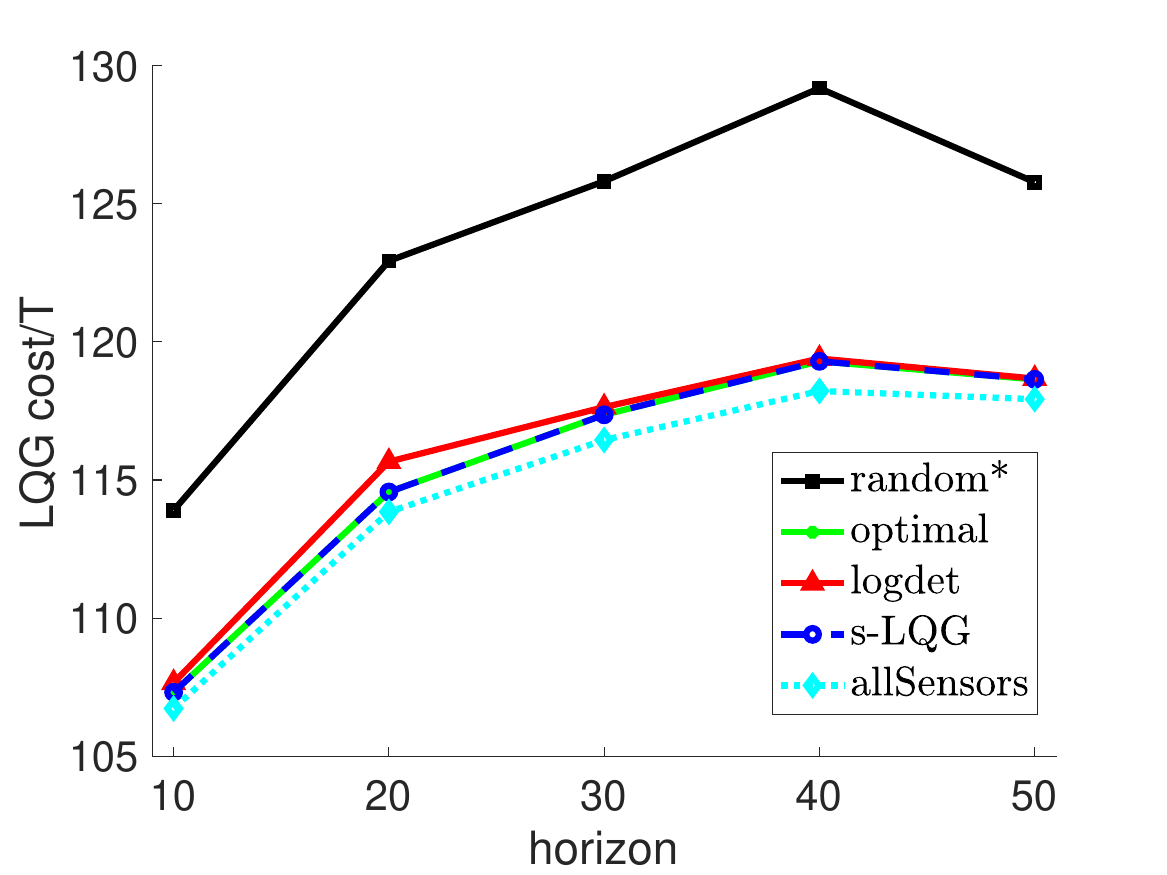} \\
(c)  budget $b=10$ 
\end{minipage}
& \myhspace\myhspace
\begin{minipage}{\mpw}%
\centering%
\includegraphics[width=1.05\columnwidth]{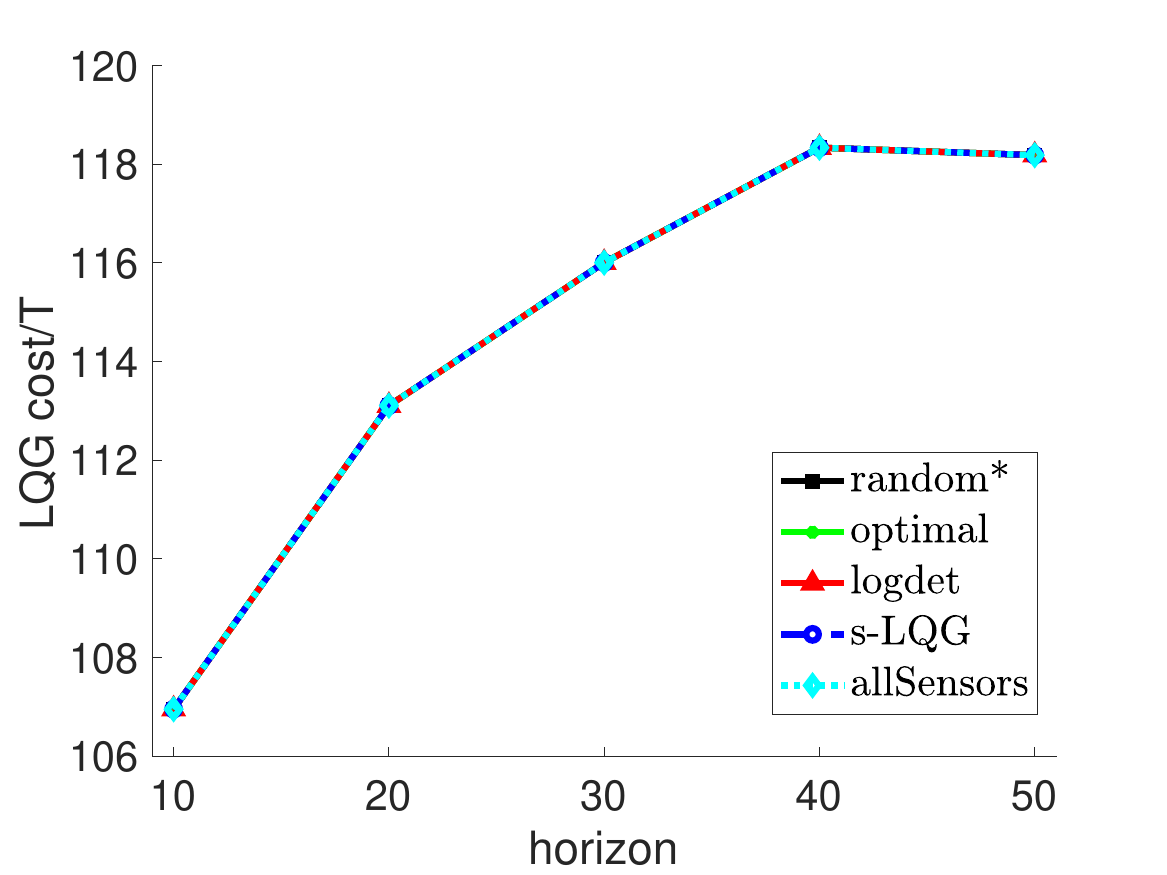} \\
(d)  budget $b=15$
\end{minipage}
\end{tabular}
\end{minipage}
\vspace{-1mm}
\caption{\label{fig:heteregenousCost}
\LQG cost for increasing horizon $T$ and for various sensing budgets~$b$. 
}\vspace{-5mm}
\end{figure}


\appendices 



\section*{Appendix A: Preliminary facts}


\begin{mylemma}[\hspace{-0.05mm}{\cite[Proposition 8.5.5]{bernstein2005matrix}}]\label{lem:inverse}
Consider two positive definite matrices $A_1$ and $A_2$. If $A_1\preceq A_2$, then $A_2\inv\preceq A_1\inv\!$.
\end{mylemma}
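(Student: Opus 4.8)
The plan is to reduce the general inequality to a comparison against the identity by a congruence transformation, and then close the argument with a short spectral observation. Since $A_1$ and $A_2$ are positive definite (hence symmetric and invertible), $A_2$ admits a symmetric positive definite square root $A_2^{1/2}$, whose inverse I denote $A_2^{-1/2}$. The whole proof is organized around conjugating the given inequality by $A_2^{-1/2}$ twice.

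First I would conjugate the hypothesis $A_1\preceq A_2$ by $A_2^{-1/2}$. Because congruence by an invertible matrix preserves the Loewner order (if $X\preceq Y$ and $P$ is invertible, then $P\tran X P\preceq P\tran Y P$), this yields
\begin{equation*}
A_2^{-1/2}A_1 A_2^{-1/2}\;\preceq\; A_2^{-1/2}A_2 A_2^{-1/2}\;=\;\eye.
\end{equation*}
Set $B\triangleq A_2^{-1/2}A_1 A_2^{-1/2}$. Then $B$ is symmetric, and $B\succ 0$ since it is a congruence of the positive definite matrix $A_1$. Hence $0\prec B\preceq\eye$.

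The key step is to show that $0\prec B\preceq\eye$ implies $B^{-1}\succeq\eye$. I would establish this from the spectral decomposition of the symmetric matrix $B$: the condition $0\prec B\preceq\eye$ forces every eigenvalue of $B$ into the interval $(0,1]$, so every eigenvalue of $B^{-1}$ is at least $1$, giving $B^{-1}\succeq\eye$. Unwinding the definition of $B$ gives $B^{-1}=A_2^{1/2}A_1^{-1}A_2^{1/2}$, so we obtain $A_2^{1/2}A_1^{-1}A_2^{1/2}\succeq\eye$.

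Finally I would conjugate this last inequality by $A_2^{-1/2}$ once more, again invoking that congruence preserves the Loewner order:
\begin{equation*}
A_1^{-1}\;=\;A_2^{-1/2}\bigl(A_2^{1/2}A_1^{-1}A_2^{1/2}\bigr)A_2^{-1/2}\;\succeq\;A_2^{-1/2}\,\eye\,A_2^{-1/2}\;=\;A_2^{-1},
\end{equation*}
which is exactly $A_2^{-1}\preceq A_1^{-1}$, as claimed. The only mildly delicate point is the spectral step $0\prec B\preceq\eye\Rightarrow B^{-1}\succeq\eye$; everything else is routine bookkeeping with congruences and the symmetric square root of $A_2$.
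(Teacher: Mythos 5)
Your proof is correct and complete: the square-root congruence reduction to $0\prec B\preceq\eye$, the spectral step giving $B\inv\succeq\eye$ (eigenvalues of $B$ in $(0,1]$, hence eigenvalues of $B\inv$ at least $1$), the identity $B\inv=A_2^{1/2}A_1\inv A_2^{1/2}$, and the final conjugation back by $A_2^{-1/2}$ are all valid, and this is the standard textbook argument for this fact. Note that there is no in-paper proof to compare against: the lemma is stated as a preliminary fact in Appendix~A and simply cited from \cite[Proposition 8.5.5]{bernstein2005matrix}, so your argument in effect supplies the proof the paper delegates to the reference.
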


\begin{mylemma}[Trace inequality~{\cite[Proposition~8.4.13]{bernstein2005matrix}}]\label{lem:trace_low_bound_lambda_min}
Consider a symmetric $A$, and a positive semi-definite $B$. 
Then,
\begin{equation*}
\lambda_\min(A)\trace{B}\leq \trace{AB}\leq \lambda_\max(A)\trace{B}.
\end{equation*}
\end{mylemma}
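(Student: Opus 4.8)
The plan is to prove both inequalities at once by sandwiching $A$ between scalar multiples of the identity and exploiting that, for a fixed positive semi-definite $B$, the map $M\mapsto\trace{MB}$ is monotone with respect to the Loewner (positive semi-definite) order. First I would invoke the spectral theorem: since $A$ is symmetric, all its eigenvalues are real and lie in the interval $[\lambda_\min(A),\lambda_\max(A)]$, which yields the matrix inequalities $\lambda_\min(A)\eye \preceq A \preceq \lambda_\max(A)\eye$.

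The second step is an auxiliary trace-positivity fact: for any two positive semi-definite matrices $C$ and $B$, one has $\trace{CB}\geq 0$. I would establish this by writing $\trace{CB}=\trace{C^{1/2}B\,C^{1/2}}$, using the cyclic property of the trace together with the symmetric square root $C^{1/2}$ of $C$; the matrix $C^{1/2}B\,C^{1/2}$ is a congruence transform of the positive semi-definite $B$, hence itself positive semi-definite, so its trace is a sum of nonnegative eigenvalues.

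With these two ingredients the result follows by applying the auxiliary fact to the differences $A-\lambda_\min(A)\eye\succeq 0$ and $\lambda_\max(A)\eye-A\succeq 0$. Concretely, $\trace{(A-\lambda_\min(A)\eye)B}\geq 0$ rearranges to $\trace{AB}\geq\lambda_\min(A)\trace{B}$ (using $\trace{\eye\, B}=\trace{B}$), and symmetrically $\trace{(\lambda_\max(A)\eye-A)B}\geq 0$ gives $\trace{AB}\leq\lambda_\max(A)\trace{B}$, completing the proof.

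The result is elementary, so there is no genuine obstacle; the single point requiring care is the auxiliary fact $\trace{CB}\geq 0$. One must resist the temptation to argue directly that ``the product of positive semi-definite matrices has nonnegative trace because it is itself positive semi-definite,'' since $CB$ need not even be symmetric. The symmetrization $\trace{CB}=\trace{C^{1/2}BC^{1/2}}$ --- equivalently, diagonalizing $A$ as $U\Lambda U\tran$ and noting that the diagonal entries of the positive semi-definite matrix $U\tran B U$ are nonnegative and sum to $\trace{B}$, so that $\trace{AB}=\sum_i\lambda_i(A)(U\tran BU)_{ii}$ is a convex-combination-type bound --- is what makes the argument rigorous.
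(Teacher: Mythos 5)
Your proof is correct. The paper never proves this lemma---it is imported as a preliminary fact directly from \cite[Proposition~8.4.13]{bernstein2005matrix}---so there is no in-paper argument to compare against; your route (sandwiching $A$ via $\lambda_\min(A)\eye \preceq A \preceq \lambda_\max(A)\eye$ and reducing to the auxiliary fact $\trace{CB}\geq 0$ for positive semi-definite $C$ and $B$, justified through the symmetric square root $\trace{C^{1/2}BC^{1/2}}$, or equivalently the eigenbasis argument writing $\trace{AB}=\sum_i \lambda_i(A)\,(U\tran B U)_{ii}$ with nonnegative weights summing to $\trace{B}$) is the standard textbook proof, and your explicit caution that $CB$ itself need not be symmetric correctly flags the one pitfall in this otherwise elementary argument.
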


\begin{mylemma}[Woodbury identity~{\cite[Corollary 2.8.8]{bernstein2005matrix}}]\label{lem:woodbury}
Consider $A$, $C$, $U$ and $V$ such that $A$, $C$, and $A+UCV$ are invertible. Then,
\begin{equation*}
(A+UCV)\inv=A\inv-A\inv U(C\inv+VA\inv U)\inv VA\inv\!.
\end{equation*}
\end{mylemma}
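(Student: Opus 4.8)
The plan is to prove the identity by direct verification. I denote the claimed right-hand side by $M \triangleq A\inv - A\inv U(C\inv + VA\inv U)\inv VA\inv$ and show that $M(A+UCV)$ equals the identity. Since $A+UCV$ is square and invertible by hypothesis, exhibiting a left inverse suffices to conclude $M = (A+UCV)\inv$. First I would note that the inner factor $C\inv + VA\inv U$ is itself invertible, so that $M$ is well defined; this is forced by the standing assumptions that $A$, $C$, and $A+UCV$ are invertible, for instance via the determinant identity $\det(A+UCV)=\det(A)\,\det(C)\,\det(C\inv+VA\inv U)$.

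For the computation I abbreviate $K \triangleq (C\inv + VA\inv U)\inv$, so that $K\inv = C\inv + VA\inv U$, and I write $M = A\inv - A\inv U K V A\inv$. Distributing $M(A+UCV)$ and using $A\inv A = I$ gives
\begin{equation*}
M(A+UCV) = I + A\inv U C V - A\inv U K V - A\inv U K V A\inv U C V.
\end{equation*}
I then factor the common left factor $A\inv U$ and the common right factor $V$ out of the last three terms, which reduces the claim $M(A+UCV)=I$ to showing that the bracketed matrix
\begin{equation*}
C - K - K V A\inv U C
\end{equation*}
vanishes.

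The crux is the single algebraic simplification $K\inv C = (C\inv + VA\inv U)C = I + VA\inv U C$, whence $K(I + VA\inv U C) = K K\inv C = C$, i.e.\ $K + K V A\inv U C = C$. Substituting this into the bracket shows it is exactly $C - C = 0$, so $M(A+UCV)=I$ and the identity follows.

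There is no genuine obstacle here, since the result is a verification rather than an existence claim; the only thing to be careful about is the bookkeeping of the noncommutative products, keeping the left factor $A\inv U$ and the right factor $V$ correctly aligned, with the entire argument resting on the one relation $K\inv = C\inv + VA\inv U$. As an alternative derivation I could invert the block matrix $\left(\begin{smallmatrix} A & U \\ V & -C\inv \end{smallmatrix}\right)$ in two ways: eliminating the $(2,2)$ block produces a $(1,1)$ entry $(A+UCV)\inv$, while eliminating the $(1,1)$ block produces $A\inv - A\inv U(C\inv + VA\inv U)\inv VA\inv$, and equating these two expressions for the same block of the inverse yields the claim. I would favor the direct verification, however, as it is self-contained and does not presuppose the block-inverse formula.
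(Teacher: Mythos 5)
Your proof is correct. Note, though, that the paper itself offers no proof of this lemma: it is imported verbatim as a preliminary fact, cited to Bernstein's \emph{Matrix Mathematics} (Corollary~2.8.8), so there is no internal argument to compare against. Your direct verification is a complete, self-contained substitute: the expansion of $M(A+UCV)$, the factoring of $A\inv U$ on the left and $V$ on the right, and the key cancellation $K + KVA\inv UC = KK\inv C = C$ are all correct, and your observation that a one-sided inverse of a square invertible matrix suffices is sound. The one point that genuinely needs care --- well-definedness of the inner inverse $(C\inv + VA\inv U)\inv$ --- you handle via the Sylvester-type identity $\det(A+UCV)=\det(A)\det(C)\det(C\inv+VA\inv U)$, which is valid; if you prefer to avoid invoking that identity, a direct kernel argument works just as well: if $(C\inv+VA\inv U)x=0$, then $(A+UCV)A\inv Ux = UC(C\inv x + VA\inv Ux)=0$, so invertibility of $A+UCV$ forces $Ux=0$, whence $C\inv x=0$ and $x=0$. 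Your alternative block-matrix derivation via the two Schur complements of $\left(\begin{smallmatrix} A & U \\ V & -C\inv \end{smallmatrix}\right)$ is also correct and is essentially the textbook route behind the cited corollary; your preference for the direct computation is reasonable here, since nothing in the paper's use of the lemma requires the block-inverse machinery.
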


\begin{mylemma}[\hspace{-0.05mm}{\cite[Proposition 8.5.12]{bernstein2005matrix}}]\label{lem:traceAB_mon}
Consider two symmetric matrices $A_1$ and $A_2$, and a positive semi-definite matrix $B$. If~$A_1\preceq A_2$, then $\trace{A_1B}\leq \trace{A_2B}$.  
\end{mylemma}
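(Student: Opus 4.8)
The plan is to reduce the claim to the elementary fact that the trace of a product of two positive semi-definite matrices is nonnegative, and then to establish that fact directly. First I would set $A \triangleq A_2 - A_1$. Since $A_1 \preceq A_2$, the matrix $A$ is symmetric and positive semi-definite, and by linearity of the trace,
\[
\trace{A_2 B} - \trace{A_1 B} = \trace{(A_2-A_1)B} = \trace{AB}.
\]
Hence it suffices to prove $\trace{AB}\geq 0$ for any two positive semi-definite matrices $A$ and $B$.

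To establish this, I would use the positive semi-definite square root $B^{1/2}$ of $B$ (which exists because $B\succeq 0$), so that $B = B^{1/2}B^{1/2}$ with $B^{1/2}$ symmetric. Invoking the cyclic property of the trace,
\[
\trace{AB} = \trace{A B^{1/2}B^{1/2}} = \trace{B^{1/2} A B^{1/2}}.
\]
The matrix $B^{1/2} A B^{1/2}$ is itself positive semi-definite, being a congruence transform $X\tran A X$ of the positive semi-definite matrix $A$ with $X = B^{1/2}$; concretely, for any vector $z$ one has $z\tran (B^{1/2} A B^{1/2}) z = (B^{1/2}z)\tran A (B^{1/2}z)\geq 0$. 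Since the trace of a positive semi-definite matrix equals the sum of its (nonnegative) eigenvalues, it is nonnegative, giving $\trace{AB}\geq 0$ and therefore $\trace{A_1 B}\leq \trace{A_2 B}$.

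As an alternative I would mention a route that avoids the square root: writing the spectral decomposition $B = \sum_i \lambda_i q_i q_i\tran$ with eigenvalues $\lambda_i\geq 0$ and orthonormal eigenvectors $q_i$, linearity of the trace yields $\trace{AB} = \sum_i \lambda_i\, q_i\tran A q_i$, and each summand is nonnegative because $\lambda_i\geq 0$ and $q_i\tran A q_i\geq 0$ by positive semi-definiteness of $A$. There is no substantial obstacle here; the only point requiring care is justifying that the congruence (or product) of positive semi-definite matrices retains the correct sign, which the square-root argument handles cleanly and which is precisely why reducing to $\trace{AB}$ rather than working with $A_1$ and $A_2$ directly is the right move.
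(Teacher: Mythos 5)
Your proof is correct. Note that the paper offers no argument of its own here: the lemma appears in Appendix~A as a preliminary fact cited directly to \cite[Proposition 8.5.12]{bernstein2005matrix}, so your write-up supplies exactly what the citation elides. Your reduction --- setting $A \triangleq A_2 - A_1 \succeq 0$ by linearity of the trace, then proving $\trace{AB} \geq 0$ for positive semi-definite $A$ and $B$ via the symmetric square root $B^{1/2}$ and cyclicity, with the congruence $B^{1/2} A B^{1/2} \succeq 0$ closing the argument --- is the standard textbook proof of the cited fact, and every step is justified (in particular, $B^{1/2} A B^{1/2}$ is symmetric as well as positive semi-definite, so its trace is indeed a sum of nonnegative eigenvalues). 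The spectral-decomposition alternative you sketch, $\trace{AB} = \sum_i \lambda_i\, q_i\tran A q_i$ with $\lambda_i \geq 0$ and $q_i\tran A q_i \geq 0$, is equally valid and arguably even more elementary, since it avoids invoking existence of the square root; either route fully discharges the lemma.
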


\begin{mylemma}[\hspace{-0.05mm}{\cite[Appendix~E]{bertsekas2005dynamic}}]\label{lem:covariance_riccati}
For any sensors $\calS$, $\Sigma\att{t}{t}(\calS)$ is the solution of the Kalman filtering recursion
\beq\label{eq:covariance_riccati}
\begin{array}{rcl}
\Sigma\att{t}{t}(\calS) \!\!&\!\!=\!\!&\!\![ 
\Sigma\att{t}{t-1}(\calS)\inv + C_t(\calS)\tran V_t(\calS)\inv C_t(\calS)]\inv\!,\\
\Sigma\att{t+1}{t}(\calS) \!\!&\!\!=\!\!&\!\! A_{t} \Sigma\att{t}{t}(\calS) A_{t}\tran + W_{t},
\end{array}
\eeq
with boundary \validated{condition the}{condition} $\initialCovariance(\calS)=\initialCovariance$. 
\end{mylemma}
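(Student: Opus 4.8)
The plan is to derive the two displayed recursions as the standard covariance propagation of the Kalman filter for the linear-Gaussian system in eq.~\eqref{eq:system} under the stacked active-sensor measurement model in eq.~\eqref{eq:activeSensors}. Since the initial state and all noises are jointly Gaussian and both the dynamics and measurement maps are linear, the conditional law of $x_t$ given $y\of{1}{\calS},\ldots,y\of{t}{\calS}$ remains Gaussian for every $t$; hence it suffices to track its mean $\hat{x}\of{t}{\calS}$ and covariance $\Sigma\att{t}{t}(\calS)$. I would first note that each control $u_t$ is a deterministic function of the past measurements, and is therefore measurable with respect to the conditioning $\sigma$-algebra, so $B_t u_t$ only shifts the conditional mean and leaves the conditional covariance untouched; this is what lets the covariance recursion run independently of the control policy.

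For the prediction (time-update) step, I would write the one-step-ahead error as $x_{t+1}-\hat{x}\att{t+1}{t}(\calS)=A_t(x_t-\hat{x}\of{t}{\calS})+w_t$, using $\hat{x}\att{t+1}{t}(\calS)=A_t\hat{x}\of{t}{\calS}+B_tu_t$ together with the fact that $B_tu_t$ is known. Because $w_t$ has mean zero, covariance $W_t$, and is independent of $x_1$ and of all past process and measurement noises (and therefore of the filtering error, which is a function of $x_1$, $w_{1:t-1}$, and $v\of{1:t}{\calS}$), the cross term vanishes in expectation and I obtain $\Sigma\att{t+1}{t}(\calS)=A_t\Sigma\att{t}{t}(\calS)A_t\tran+W_t$.

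For the measurement step I would derive the update first in classical \emph{covariance form} and then convert it to \emph{information form}. Conditioning the Gaussian prior $\mathcal{N}(\hat{x}\att{t}{t-1}(\calS),\Sigma\att{t}{t-1}(\calS))$ on $y_t(\calS)=C_t(\calS)x_t+v_t(\calS)$, with $v_t(\calS)$ independent of the prior and of covariance $V_t(\calS)$, the standard Gaussian-conditioning formula (see \cite[Appendix~E]{bertsekas2005dynamic}) gives
\[
\Sigma\att{t}{t}(\calS)=\Sigma\att{t}{t-1}(\calS)-\Sigma\att{t}{t-1}(\calS)C_t(\calS)\tran[C_t(\calS)\Sigma\att{t}{t-1}(\calS)C_t(\calS)\tran+V_t(\calS)]\inv C_t(\calS)\Sigma\att{t}{t-1}(\calS).
\]
I would then invoke the Woodbury identity (Lemma~\ref{lem:woodbury}) with $A=\Sigma\att{t}{t-1}(\calS)\inv$, $U=C_t(\calS)\tran$, $C=V_t(\calS)\inv$, and $V=C_t(\calS)$, which rewrites $[\Sigma\att{t}{t-1}(\calS)\inv+C_t(\calS)\tran V_t(\calS)\inv C_t(\calS)]\inv$ as exactly the right-hand side above, yielding the information form claimed in eq.~\eqref{eq:covariance_riccati}. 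The boundary condition $\initialCovariance(\calS)=\initialCovariance$ follows directly from the definition of the prior at $t=1$.

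The points requiring care are bookkeeping rather than a substantive obstacle: verifying that the filtering error at time $t$ is a function of $x_1$, $w_{1:t-1}$, $v\of{1:t}{\calS}$ so that the independence of $w_t$ and $v_t(\calS)$ can be used to eliminate the cross terms, and checking the invertibility hypotheses of the Woodbury identity. The latter hold because $V_t(\calS)$ is positive definite (it is the block-diagonal of the positive-definite $V_{i,t}$) and $\Sigma\att{t}{t-1}(\calS)$ is positive definite whenever $\initialCovariance$ and the $W_t$ are. Since this is the textbook Kalman recursion, I would confine the exposition to noting that the derivation carries over verbatim to the stacked active-sensor matrices $C_t(\calS)$ and $V_t(\calS)$, and cite \cite[Appendix~E]{bertsekas2005dynamic} for the conditioning formula.
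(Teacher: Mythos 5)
Your derivation is correct and is exactly the standard textbook argument that the paper relies on: it offers no proof of this lemma, citing \cite[Appendix~E]{bertsekas2005dynamic} instead, and your time-update/measurement-update derivation (with Woodbury converting the covariance form to the stated information form, and the observation that measurement-adapted controls shift only the conditional mean) is the derivation found there, carried over verbatim to the stacked matrices $C_t(\calS)$, $V_t(\calS)$. Nothing further is needed.
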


\begin{mylemma}[\hspace{-0.05mm}{\cite[Lemma~6]{Tzoumas18acc-scLGQG}}]\label{prop:one-step_monotonicity} Consider two sensor sets $\calS_1,\calS_2 \subseteq \calV$. {If~$\calS_1\subseteq \calS_2$, then $\Sigma\att{1}{1}(\calS_1)\succeq \Sigma\att{1}{1}(\calS_2)$}.  
\end{mylemma}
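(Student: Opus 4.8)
The plan is to reduce the claim to a single Kalman measurement-update step and then invoke the order-reversing property of matrix inversion (Lemma~\ref{lem:inverse}).

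First, I would use Lemma~\ref{lem:covariance_riccati} to express the posterior error covariance at time~$1$ in information form,
\begin{equation*}
\Sigma\att{1}{1}(\calS)=\left[\Sigma\att{1}{0}\inv+C_1(\calS)\tran V_1(\calS)\inv C_1(\calS)\right]\inv,
\end{equation*}
where the prior $\Sigma\att{1}{0}=\initialCovariance$ is the fixed initial covariance of $x_1$ and, crucially, does not depend on the active set $\calS$; all $\calS$-dependence is carried by the second summand.

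Second, I would exploit the block-diagonal structure of the stacked measurement model: since $V_1(\calS)=\diag{V\at{i_1,1},\ldots,V\at{i_{|\calS|},1}}$, the sensor-information term decouples across sensors as $C_1(\calS)\tran V_1(\calS)\inv C_1(\calS)=\sum_{i\in\calS}C_{i,1}\tran V_{i,1}\inv C_{i,1}$, where each summand is positive semidefinite because each $V_{i,1}$ is positive definite. Consequently, if $\calS_1\subseteq\calS_2$ the additional terms indexed by $\calS_2\setminus\calS_1$ are positive semidefinite, so the two information matrices obey
\begin{equation*}
\Sigma\att{1}{0}\inv+C_1(\calS_1)\tran V_1(\calS_1)\inv C_1(\calS_1)\;\preceq\;\Sigma\att{1}{0}\inv+C_1(\calS_2)\tran V_1(\calS_2)\inv C_1(\calS_2).
\end{equation*}

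Third, both sides above are positive definite (a positive definite prior information matrix plus a positive semidefinite correction), so Lemma~\ref{lem:inverse} applies and reverses the ordering under inversion, yielding $\Sigma\att{1}{1}(\calS_2)\preceq\Sigma\att{1}{1}(\calS_1)$, i.e.\ $\Sigma\att{1}{1}(\calS_1)\succeq\Sigma\att{1}{1}(\calS_2)$, as claimed. The only point I would handle with care, rather than treat as routine, is the positive-definiteness bookkeeping that legitimizes Lemma~\ref{lem:inverse}: one must confirm that $\Sigma\att{1}{0}\inv$ exists and is positive definite (from the standing assumption that the initial covariance is invertible) so that adding a positive semidefinite sensor-information term keeps both operands positive definite. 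Everything else follows directly from the information-form update and the monotonicity of the positive semidefinite cone, so I do not expect any substantive obstacle beyond this verification.
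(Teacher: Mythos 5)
Your proposal is correct, and it is essentially the only argument in play: this paper does not prove the lemma at all but imports it verbatim from \cite[Lemma~6]{Tzoumas18acc-scLGQG}, and the proof there is exactly your chain --- information-form measurement update, the sensor-wise decomposition $C_1(\calS)\tran V_1(\calS)\inv C_1(\calS)=\sum_{i\in\calS}C_{i,1}\tran V_{i,1}\inv C_{i,1}$ coming from the block-diagonal noise covariance $V_1(\calS)$, positive semidefiniteness of the extra information terms indexed by $\calS_2\setminus\calS_1$, and the order reversal under inversion (Lemma~\ref{lem:inverse}). One caution, about notation rather than substance: in this paper the recursion of Lemma~\ref{lem:covariance_riccati} is anchored by the boundary condition $\initialCovariance(\calS)=\initialCovariance$, so no prior $\Sigma\att{1}{0}$ is ever defined, and read literally $\Sigma\att{1}{1}(\calS)$ is the same matrix for every $\calS$, in which case the lemma holds trivially with equality. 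Your proof instead adopts the convention of the cited conference version, where $\Sigma\att{1}{1}(\calS)$ is the posterior obtained by a time-$1$ update of a fixed, $\calS$-independent prior; that is the reading under which the lemma has nontrivial content (and the one consistent with how it is invoked in Appendix~F), and under it your positive-definiteness bookkeeping --- an invertible prior information matrix plus a positive semidefinite increment --- is precisely what legitimizes the appeal to Lemma~\ref{lem:inverse}.
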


\begin{mylemma}[\hspace{-0.05mm}{\cite[Corollary~1]{Tzoumas18acc-scLGQG}}]\label{cor:from_t_to_t}
Let $\Sigma\att{t}{t}$ be defined as in eq.~\eqref{eq:covariance_riccati} with boundary \validated{condition the}{condition} $\Sigma\att{1}{1}$; similarly, let $\bar{\Sigma}\att{t}{t}$ be defined as in eq.~\eqref{eq:covariance_riccati} with boundary \validated{condition the}{condition} $\bar{\Sigma}\att{1}{1}$.
If~$\Sigma\att{t}{t}\preceq\bar{\Sigma}\att{t}{t}$, then $\Sigma\att{t+i}{t+i}\preceq\bar{\Sigma}\att{t+i}{t+i}$ for any positive integer $i$.  
\end{mylemma}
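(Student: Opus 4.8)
The plan is to reduce the statement to its single-step version and then iterate. It suffices to prove the following claim: if $\Sigma\att{t}{t}\preceq\bar{\Sigma}\att{t}{t}$ holds at some time, then $\Sigma\att{t+1}{t+1}\preceq\bar{\Sigma}\att{t+1}{t+1}$. Granting this claim, the lemma for an arbitrary positive integer $i$ follows by an immediate induction on $i$: the case $i=1$ is the single-step claim, and assuming $\Sigma\att{t+i}{t+i}\preceq\bar{\Sigma}\att{t+i}{t+i}$, one applies the single-step claim at time $t+i$ to get $\Sigma\att{t+i+1}{t+i+1}\preceq\bar{\Sigma}\att{t+i+1}{t+i+1}$. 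Throughout, the key structural fact is that the two recursions in eq.~\eqref{eq:covariance_riccati} share the \emph{same} system matrices $A_t$, process covariance $W_t$, and information term $C_{t+1}\tran V_{t+1}\inv C_{t+1}$, differing only in boundary condition; this makes all the ingredients below common to both recursions.

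To prove the single-step claim I would treat the two substeps of the Kalman recursion separately. For the prediction step, eq.~\eqref{eq:covariance_riccati} gives $\Sigma\att{t+1}{t}=A_t\Sigma\att{t}{t}A_t\tran+W_t$ and $\bar{\Sigma}\att{t+1}{t}=A_t\bar{\Sigma}\att{t}{t}A_t\tran+W_t$. Since the Loewner order is preserved under congruence — for every $x$ one has $x\tran A_t\Sigma\att{t}{t}A_t\tran x=(A_t\tran x)\tran\Sigma\att{t}{t}(A_t\tran x)\leq(A_t\tran x)\tran\bar{\Sigma}\att{t}{t}(A_t\tran x)=x\tran A_t\bar{\Sigma}\att{t}{t}A_t\tran x$ — and is preserved under the common additive term $W_t$, the hypothesis $\Sigma\att{t}{t}\preceq\bar{\Sigma}\att{t}{t}$ yields $\Sigma\att{t+1}{t}\preceq\bar{\Sigma}\att{t+1}{t}$. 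Note that no invertibility of $A_t$ is needed for this direction.

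For the correction step, writing the common information term $\Pi_{t+1}\triangleq C_{t+1}\tran V_{t+1}\inv C_{t+1}\succeq 0$, eq.~\eqref{eq:covariance_riccati} gives $\Sigma\att{t+1}{t+1}=[\Sigma\att{t+1}{t}\inv+\Pi_{t+1}]\inv$ and $\bar{\Sigma}\att{t+1}{t+1}=[\bar{\Sigma}\att{t+1}{t}\inv+\Pi_{t+1}]\inv$, and I would invoke Lemma~\ref{lem:inverse} twice. The predicted covariances are positive definite (invertibility of the predicted covariances holds throughout the recursion, being guaranteed by $W_t\succ 0$ together with the positive-definite initial covariance), so from $\Sigma\att{t+1}{t}\preceq\bar{\Sigma}\att{t+1}{t}$ Lemma~\ref{lem:inverse} gives $\bar{\Sigma}\att{t+1}{t}\inv\preceq\Sigma\att{t+1}{t}\inv$. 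Adding the common positive-semidefinite $\Pi_{t+1}$ preserves the order, so $\bar{\Sigma}\att{t+1}{t}\inv+\Pi_{t+1}\preceq\Sigma\att{t+1}{t}\inv+\Pi_{t+1}$, and both sides are positive definite (positive definite plus positive semidefinite). A second application of Lemma~\ref{lem:inverse} inverts the inequality back, yielding $[\Sigma\att{t+1}{t}\inv+\Pi_{t+1}]\inv\preceq[\bar{\Sigma}\att{t+1}{t}\inv+\Pi_{t+1}]\inv$, i.e. exactly $\Sigma\att{t+1}{t+1}\preceq\bar{\Sigma}\att{t+1}{t+1}$, which closes the claim.

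The proof is essentially routine, and the only point requiring care is bookkeeping the positive-definiteness hypotheses of Lemma~\ref{lem:inverse} at each of its two applications: one must verify that the predicted covariances are invertible and that $\Sigma\att{t+1}{t}\inv+\Pi_{t+1}$ and its barred counterpart are positive definite before re-inverting. The conceptual crux is the order-reversal of the matrix inverse over positive definite matrices — applying it once converts the hypothesis on $\Sigma\att{t+1}{t}$ into a reversed inequality, and applying it a second time, after threading through the common additive information term, restores the original direction and delivers the conclusion on $\Sigma\att{t+1}{t+1}$.
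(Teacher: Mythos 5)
Your proof is correct and takes essentially the same route as the paper: the lemma is stated here without proof, deferring to \cite[Corollary~1]{Tzoumas18acc-scLGQG}, whose argument is precisely your single-step monotonicity claim (congruence plus the common $W_t$ for the prediction step, and two applications of the order-reversal of the inverse as in Lemma~\ref{lem:inverse} around the common information term $C_{t+1}\tran V_{t+1}\inv C_{t+1}$ for the correction step) followed by induction on $i$. Your bookkeeping of the positive-definiteness hypotheses is also consistent with the paper's setting, where well-definedness of the information-form recursion in eq.~\eqref{eq:covariance_riccati} already presupposes invertibility of the predicted covariances.
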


\begin{mylemma}[\hspace{-0.05mm}{\cite[Corollary~2]{Tzoumas18acc-scLGQG}}]\label{cor:from_t_to_t+1}
Let $\Sigma\att{t}{t}$ be defined as in eq.~\eqref{eq:covariance_riccati} with boundary \validated{condition the}{condition} $\Sigma\att{1}{1}$; similarly, let $\bar{\Sigma}\att{t}{t}$ be defined as in eq.~\eqref{eq:covariance_riccati} with boundary \validated{condition the}{condition} $\bar{\Sigma}\att{1}{1}$.
If~$\Sigma\att{t}{t}\preceq\bar{\Sigma}\att{t}{t}$, then $\Sigma\att{t+i}{t+i-1}\preceq\bar{\Sigma}\att{t+i}{t+i-1}$ for any positive integer $i$.  
\end{mylemma}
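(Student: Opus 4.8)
The plan is to reduce this prediction-covariance (prior) ordering to the posterior-covariance ordering already granted by Lemma~\ref{cor:from_t_to_t}, followed by a single application of the prediction step of the Kalman recursion in Lemma~\ref{lem:covariance_riccati}. The only extra ingredient is the elementary fact that the affine map $X \mapsto A X A\tran + W$ is monotone with respect to the Löwner order: if $X \preceq Y$, then $Y - X \succeq 0$, hence $A(Y-X)A\tran \succeq 0$ because congruence preserves positive semidefiniteness, and therefore $A X A\tran + W \preceq A Y A\tran + W$.

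First I would establish the ordering of the \emph{posterior} covariances one step before the target time. Fix a positive integer $i$. When $i=1$, the posterior ordering $\Sigma\att{t}{t}\preceq\bar{\Sigma}\att{t}{t}$ at time $t+i-1=t$ is exactly the hypothesis. When $i\geq 2$, I would apply Lemma~\ref{cor:from_t_to_t} with the positive integer $i-1$ to the hypothesis to obtain $\Sigma\att{t+i-1}{t+i-1}\preceq\bar{\Sigma}\att{t+i-1}{t+i-1}$. Thus in either case the posterior covariances at time $t+i-1$ are ordered.

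Next I would push this ordering forward by one prediction step. By the prediction recursion in eq.~\eqref{eq:covariance_riccati},
\[
\Sigma\att{t+i}{t+i-1} = A_{t+i-1}\,\Sigma\att{t+i-1}{t+i-1}\,A_{t+i-1}\tran + W_{t+i-1},
\]
and likewise for $\bar{\Sigma}$. Applying the monotonicity of $X\mapsto A_{t+i-1}\,X\,A_{t+i-1}\tran + W_{t+i-1}$ noted above to $\Sigma\att{t+i-1}{t+i-1}\preceq\bar{\Sigma}\att{t+i-1}{t+i-1}$ yields $\Sigma\att{t+i}{t+i-1}\preceq\bar{\Sigma}\att{t+i}{t+i-1}$, which is the claim.

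I expect no real obstacle here: the substance is carried entirely by Lemma~\ref{cor:from_t_to_t}, and the remaining step is the routine verification that congruence by $A_{t+i-1}$ and addition of the common process-noise covariance $W_{t+i-1}$ both preserve the semidefinite ordering. The only point requiring minor care is the index bookkeeping --- invoking Lemma~\ref{cor:from_t_to_t} at index $i-1$ (and treating $i=1$ separately, where the hypothesis already supplies the posterior ordering) so that exactly one prediction step remains to reach the prior covariance at time $t+i$.
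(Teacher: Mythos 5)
Your proof is correct and matches the argument behind the paper's cited result (\cite[Corollary~2]{Tzoumas18acc-scLGQG}, which this paper imports without reproving): reduce to the posterior-covariance ordering via Lemma~\ref{cor:from_t_to_t} at index $i-1$ (with $i=1$ handled directly by the hypothesis), then apply one prediction step of eq.~\eqref{eq:covariance_riccati}, using that $X \mapsto A_{t+i-1} X A_{t+i-1}\tran + W_{t+i-1}$ preserves the Löwner order. The index bookkeeping and the congruence argument are both handled correctly.
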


\begin{mylemma}\label{lem:minimum_of_products}
Consider positive real numbers $a$, $b$, $\gamma$, $a_1,a_2,\ldots,a_n$ such that $\sum_{i=1}^{n}a_i=a$.  Then, $$f(a_1,a_2,\ldots,a_n)=1-\prod_{i=1}^{n}\left(1-\gamma\frac{a_i}{b}\right)$$
has its minimum at $a_1=a_2=\ldots=a_n=a/n$, and
$$f(a/n,a/n,\ldots,a/n)=1-\left(1-\validated{\frac{a\gamma/b}{n}}{\frac{a\gamma}{b n}}\right)^n\geq 1-e^{-a\gamma/b}\!.$$
\end{mylemma}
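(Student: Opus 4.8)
The plan is to convert the minimization of $f$ over the simplex $\{(a_1,\ldots,a_n) : a_i \ge 0,\ \sum_i a_i = a\}$ into a maximization of the product $P(a_1,\ldots,a_n) \triangleq \prod_{i=1}^n\left(1 - \gamma a_i/b\right)$, since $f = 1 - P$. First I would record the structural fact that the $n$ factors of $P$ have a \emph{fixed} sum, namely $\sum_{i=1}^n\left(1 - \gamma a_i/b\right) = n - \gamma a/b$, which is independent of how the mass $a$ is distributed among the $a_i$. I would also note that in the regime in which the lemma is applied each factor is a multiplicative contraction, so $0 \le 1 - \gamma a_i/b \le 1$; in particular every factor is nonnegative.

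With the sum of the factors fixed and all factors nonnegative, the arithmetic-mean--geometric-mean inequality applies directly: the geometric mean of the factors is at most their arithmetic mean, with equality if and only if all factors coincide. This yields $P \le \left(1 - \gamma a/(bn)\right)^n$, and the maximum is attained exactly when $1 - \gamma a_1/b = \cdots = 1 - \gamma a_n/b$, i.e. when $a_1 = \cdots = a_n = a/n$. Hence $f = 1 - P$ is minimized at the symmetric point, and $f(a/n,\ldots,a/n) = 1 - \left(1 - \gamma a/(bn)\right)^n$, which proves the first claim and the equality in the displayed bound.

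For the final inequality I would invoke the elementary estimate $1 - x \le e^{-x}$, valid for all real $x$, taking $x = \gamma a/(bn)$. Raising both (nonnegative) sides to the $n$-th power gives $\left(1 - \gamma a/(bn)\right)^n \le e^{-\gamma a/b}$, and subtracting from $1$ yields $f(a/n,\ldots,a/n) = 1 - \left(1 - \gamma a/(bn)\right)^n \ge 1 - e^{-\gamma a/b}$, as required.

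The one point I expect to need care is the nonnegativity of the factors, since AM-GM requires it; I would make explicit that the lemma is used with $\gamma a_i/b \in [0,1]$, so the factors lie in $[0,1]$ and the symmetric maximizer is interior and feasible. An alternative route, should one prefer to avoid invoking nonnegativity implicitly, is a Lagrange-multiplier argument: stationarity of $P$ under the constraint $\sum_i a_i = a$ forces $\prod_{j\neq i}\left(1 - \gamma a_j/b\right)$ to be equal across all $i$, which (for strictly positive factors) again forces all $a_i$ equal, with a boundary and second-order check confirming this critical point is the maximizer of $P$. I expect the AM-GM presentation to be the shortest and cleanest.
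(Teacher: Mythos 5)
Your proof is correct, and it takes a genuinely different route from the paper's. The paper dispenses with this lemma in one line, deferring to the extended version, where the argument is the method of Lagrange multipliers: one forms the Lagrangian for maximizing $P(a_1,\ldots,a_n)=\prod_{i=1}^n\left(1-\gamma a_i/b\right)$ subject to $\sum_i a_i = a$, finds that stationarity forces the partial products $\prod_{j\neq i}\left(1-\gamma a_j/b\right)$ to agree, concludes that the symmetric point is the critical point, and then applies $1-x\leq e^{-x}$ exactly as you do. Your observation that the factors have a \emph{fixed} sum $n-\gamma a/b$ makes AM--GM applicable and replaces the entire stationarity analysis with a single inequality; this is shorter, gives the equality characterization for free, and --- unlike a bare first-order argument --- directly certifies a \emph{global} maximum of $P$ rather than a critical point that would still need a second-order or boundary check. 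Your caveat about nonnegativity is well placed and is not merely cosmetic: the lemma as stated omits the hypothesis $\gamma a_i/b\leq 1$, and without it both the extremality claim and the final bound can fail (e.g., for $n=3$ and $\gamma a/b=6$ an asymmetric allocation yields a larger product than the symmetric one, and $\left(1-\gamma a/(bn)\right)^n\leq e^{-\gamma a/b}$ fails for even $n$ when $\gamma a/(bn)$ is large); the same restriction is implicitly needed by the paper's Lagrange argument, and it does hold in the regime where the lemma is invoked in the proofs of Theorems 4 and 5. Your suggested fallback via Lagrange multipliers is, in fact, precisely the paper's proof.
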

\begin{proof}[Proof of Lemma~\ref{lem:minimum_of_products}]
The proof is obtained using the method of Lagrange multipliers, and is omitted (for a complete proof, see~\cite[Proof of Lemma~9]{tzoumas2018lqg}) 
\end{proof}

\begin{mylemma}[Monotonicity of cost function in eq.~\eqref{eq:opt_sensors} {\cite[Proposition~2]{Tzoumas18acc-scLGQG}}]\label{prop:monotonicity}
Consider $\sum_{t=1}^{T}\trace{\Theta_t \Sigma\att{t}{t}(\calS)}$  in eq.~\eqref{eq:opt_sensors}.  If $\calS_1\subseteq \calS_2 $, then $\sum_{t=1}^{T}\trace{\Theta_t \Sigma\att{t}{t}(\calS_1)}\geq \sum_{t=1}^{T}\trace{\Theta_t \Sigma\att{t}{t}(\calS_2)}$.  
\end{mylemma}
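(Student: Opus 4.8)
The plan is to upgrade the claimed scalar (trace) inequality to a matrix inequality in the Loewner order, and then convert it back using positive semi-definiteness of the weights $\Theta_t$. Concretely, I would first establish the intermediate claim that $\calS_1 \subseteq \calS_2$ implies $\Sigma\att{t}{t}(\calS_1) \succeq \Sigma\att{t}{t}(\calS_2)$ for every $t = 1,2,\ldots,T$, i.e., adding sensors can only decrease the filtering error covariance in the positive semi-definite ordering. Granting this claim, the lemma follows termwise: each weight $\Theta_t = K_t\tran M_t K_t$ is positive semi-definite (as $M_t = B_t\tran S_t B_t + R_t \succeq 0$, using $R_t \succ 0$ and $S_t \succeq 0$), so Lemma~\ref{lem:traceAB_mon} applied with $A_1 = \Sigma\att{t}{t}(\calS_2)$, $A_2 = \Sigma\att{t}{t}(\calS_1)$, and $B = \Theta_t$ yields $\trace{\Theta_t \Sigma\att{t}{t}(\calS_2)} \leq \trace{\Theta_t \Sigma\att{t}{t}(\calS_1)}$; summing over $t = 1,\ldots,T$ gives the stated inequality.

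For the matrix claim I would induct on $t$, using the information (inverse-covariance) form of the Kalman recursion in Lemma~\ref{lem:covariance_riccati}. The base case $t=1$ is exactly Lemma~\ref{prop:one-step_monotonicity}. For the inductive step, suppose $\Sigma\att{t}{t}(\calS_1) \succeq \Sigma\att{t}{t}(\calS_2)$. The time update is monotone, since $X \mapsto A_t X A_t\tran + W_t$ preserves the Loewner order (because $A_t[\Sigma\att{t}{t}(\calS_1) - \Sigma\att{t}{t}(\calS_2)]A_t\tran \succeq 0$), giving $\Sigma\att{t+1}{t}(\calS_1) \succeq \Sigma\att{t+1}{t}(\calS_2)$. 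For the measurement update, note that because $V_{t+1}(\calS)$ is block diagonal across independent sensors, the per-step information is additive, $C_{t+1}(\calS)\tran V_{t+1}(\calS)\inv C_{t+1}(\calS) = \sum_{i \in \calS} C_{i,t+1}\tran V_{i,t+1}\inv C_{i,t+1}$, hence monotone in $\calS$, so $C_{t+1}(\calS_1)\tran V_{t+1}(\calS_1)\inv C_{t+1}(\calS_1) \preceq C_{t+1}(\calS_2)\tran V_{t+1}(\calS_2)\inv C_{t+1}(\calS_2)$. Combining this with $\Sigma\att{t+1}{t}(\calS_1)\inv \preceq \Sigma\att{t+1}{t}(\calS_2)\inv$ (Lemma~\ref{lem:inverse} applied to the prediction inequality) and adding the two inequalities gives $\Sigma\att{t+1}{t+1}(\calS_1)\inv \preceq \Sigma\att{t+1}{t+1}(\calS_2)\inv$; a final application of Lemma~\ref{lem:inverse} returns $\Sigma\att{t+1}{t+1}(\calS_1) \succeq \Sigma\att{t+1}{t+1}(\calS_2)$, closing the induction.

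The main obstacle is precisely this measurement-update step, since there the two covariance sequences are driven by different sensing matrices $C_{t+1}(\calS_1)$ and $C_{t+1}(\calS_2)$, so the monotonicity cannot be read off directly from the fixed-configuration propagation results of Lemma~\ref{cor:from_t_to_t} and Lemma~\ref{cor:from_t_to_t+1}. The decisive observation is that working in information form turns the update into the \emph{addition} of the prior information $\Sigma\att{t+1}{t}(\calS)\inv$ and the measurement information $C_{t+1}(\calS)\tran V_{t+1}(\calS)\inv C_{t+1}(\calS)$, both of which are monotone in $\calS$ (the former by the inductive hypothesis and Lemma~\ref{lem:inverse}, the latter by additivity over independent sensors); the order-reversing behavior of matrix inversion then transfers the inequality back to the covariances. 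Everything else is routine.
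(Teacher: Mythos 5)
Your proof is correct and takes essentially the same route as the paper's (i.e., the cited proof of Proposition~2 in~\cite{Tzoumas18acc-scLGQG}, whose ingredients are exactly the Appendix~A toolkit here): an induction in information form showing that $\calS_1\subseteq\calS_2$ implies $\Sigma\att{t}{t}(\calS_1)\succeq\Sigma\att{t}{t}(\calS_2)$ for every $t$, via additivity of the per-sensor information $C_{i,t}\tran V_{i,t}\inv C_{i,t}$ (block-diagonality of $V_t(\calS)$), the order-reversal of inversion (Lemma~\ref{lem:inverse}), and the base case of Lemma~\ref{prop:one-step_monotonicity}, followed by the trace inequality of Lemma~\ref{lem:traceAB_mon} with $\Theta_t=K_t\tran M_t K_t\succeq 0$. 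Your inductive measurement-update step is precisely the content that the paper's Corollaries~\ref{cor:from_t_to_t}--\ref{cor:from_t_to_t+1} are invoked for across nested sensor sets (cf.\ their use in Appendix~F), so no genuinely different machinery is introduced.
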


\renewcommand*\thesubsection{\thesection.\arabic{subsection}}

\section*{Appendix B: Proof of Theorem~\ref{th:LQG_closed}}

\noindent \textit{B.1.~Proof of part~(1) of Theorem~\ref{th:LQG_closed}}

\begin{mylemma}\label{lem:LQG_closed}
Consider any $\calS$, and let $\validated{u^\star_{1:T,\calS}}{u^\star_{1:T}(\calS)}$  be the vector of control policies $(K_1\hat{x}\of{1}{\calS}, K_2\hat{x}\of{2}{\calS}, \ldots, K_T\hat{x}\of{T}{\calS})$. 
Then $u^\star_{1:T}(\calS)$ is an optimal control policy:
\beq
u^\star_{1:T}(\calS) \in \argmin_{u_{1:T}(\calS)}h[\calS, u\of{1:T}{\calS}],\label{eq:lem:opt_control}
\eeq
and, particularly, $u^\star_{1:T}(\calS) $ attains an \LQG cost equal to:
\begin{equation}\label{eq:lem:opt_sensors}
h[\calS, u^\star_{1:T}(\calS) ] = 
\mathbb{E}(\|x_1\|_{N_1})+ \sum_{t=1}^T\left\{\text{tr}[\Theta_t\Sigma\att{t}{t}(\calS)]+\trace{W_tS_t}\right\}.
\end{equation}
\end{mylemma}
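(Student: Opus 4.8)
The plan is to first reduce the \LQG cost in eq.~\eqref{eq:finiteLQGcost} to a control-independent constant plus a single penalty on the deviation of the control from the ideal state-feedback law, and then to invoke the orthogonality of the Kalman filter to minimize that penalty. For the first step I would telescope the stage costs against the Riccati matrices $N_t$ and $S_t$ of eq.~\eqref{eq:control_riccati}. Using $Q_t = S_t - N_{t+1}$ together with $N_{T+1}=0$, the sum $\sum_{t=1}^T \mathbb{E}[\|x\at{t+1}\|^2_{Q_t}]$ telescopes and leaves $\sum_{t=1}^T \mathbb{E}[\|x\at{t+1}\|^2_{S_t}] - \sum_{t=2}^T \mathbb{E}[\|x_t\|^2_{N_t}]$. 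Substituting the dynamics $x\at{t+1}=A_tx_t+B_tu_t+w_t$ and using that $w_t$ is zero-mean and independent of $x_t$ and of $u_t$ (the latter being a function of the measurements through time $t$, hence of $w_1,\ldots,w_{t-1}$ only) isolates the noise contribution $\trace{W_t S_t}$ at each stage. Completing the square in $u_t$ on the remaining quadratic $\|A_tx_t+B_tu_t\|^2_{S_t}+\|u_t\|^2_{R_t}$ yields $\|u_t-K_tx_t\|^2_{M_t}+\|x_t\|^2_{N_t'}$, where $N_t' \triangleq A_t\tran S_t A_t - A_t\tran S_t B_t M_t\inv B_t\tran S_t A_t$. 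The key algebraic identity is $N_t'=N_t$, which follows from the Woodbury identity (Lemma~\ref{lem:woodbury}) applied to $(S_t\inv+B_tR_t\inv B_t\tran)\inv$. Collecting terms, the telescoping cancellation of the $N_t$ contributions produces the intermediate identity
\[
h[\calS,u\of{1:T}{\calS}] = \mathbb{E}(\|x_1\|^2_{N_1}) + \sum_{t=1}^T \trace{W_t S_t} + \sum_{t=1}^T \mathbb{E}\!\left[\|u_t-K_tx_t\|^2_{M_t}\right],
\]
which holds for every admissible policy.

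Second, since the first two terms do not depend on the control, I would minimize each stage penalty $\mathbb{E}[\|u_t-K_tx_t\|^2_{M_t}]$ over policies measurable with respect to the information $\mathcal{I}_t=\{y\of{1}{\calS},\ldots,y\of{t}{\calS}\}$. Writing $K_tx_t = K_t\hat{x}\of{t}{\calS}+K_te_t$ with estimation error $e_t \triangleq x_t-\hat{x}\of{t}{\calS}$, and using the two defining properties of the Kalman filter --- $\mathbb{E}[e_t\mid\mathcal{I}_t]=0$, and the error covariance $\Sigma\att{t}{t}(\calS)$ being deterministic and independent of the control sequence --- the cross term vanishes in conditional expectation, giving $\mathbb{E}[\|u_t-K_tx_t\|^2_{M_t}] = \mathbb{E}[\|u_t-K_t\hat{x}\of{t}{\calS}\|^2_{M_t}] + \trace{\Theta_t\Sigma\att{t}{t}(\calS)}$, where I used $\Theta_t=K_t\tran M_t K_t$ and $\mathbb{E}[e_te_t\tran]=\Sigma\att{t}{t}(\calS)$. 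This lower-bounds each stage penalty by $\trace{\Theta_t\Sigma\att{t}{t}(\calS)}$ for any admissible policy, and the bound is attained simultaneously across all $t$ by the certainty-equivalent choice $u_t^\star=K_t\hat{x}\of{t}{\calS}$, which is itself $\mathcal{I}_t$-measurable. Substituting this back establishes both eq.~\eqref{eq:lem:opt_control} and the cost formula eq.~\eqref{eq:lem:opt_sensors}.

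The main obstacle is the rigorous justification of the separation in the second step: one must argue that the stage penalties can be minimized independently even though the state $x_t$ --- and hence $\hat{x}\of{t}{\calS}$ --- depends on the past controls $u_1,\ldots,u_{t-1}$. The resolution hinges on the control-invariance of the estimation error $e_t$: because the controls are known functions of the past measurements, their (deterministic, given the data) effect on the state can be subtracted inside the filter, so that $e_t$ and its covariance $\Sigma\att{t}{t}(\calS)$ --- supplied by the Kalman recursion of Lemma~\ref{lem:covariance_riccati} --- are unaffected by the policy. Consequently the per-stage lower bound $\trace{\Theta_t\Sigma\att{t}{t}(\calS)}$ is policy-independent, and the certainty-equivalent controller achieves every bound at once; a final check is that plugging $u_t^\star=K_t\hat{x}\of{t}{\calS}$ into the first-step identity reproduces $\sum_{t=1}^T\trace{\Theta_t\Sigma\att{t}{t}(\calS)}$, confirming genuine optimality rather than mere feasibility.
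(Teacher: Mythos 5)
Your proof is correct, but it takes a genuinely different route from the paper's. The paper (following the proof of Lemma~1 in~\cite{tanaka2015sdp}) argues by backward dynamic programming: it defines the cost-to-go $g_t(\calS)=\min_{u_{t:T}(\calS)}h_t[\calS,u_{t:T}(\calS)]$, establishes the base case $t=T$ by completing the square inside the expectation, and propagates the induction through the Bellman recursion $g_t=\min_{u_t}\{\E(\|x_{t+1}\|^2_{Q_t}+\|u_t\|^2_{R_t})+g_{t+1}\}$, invoking at each stage that $K_t\hat{x}\of{t}{\calS}$ is the minimum-mean-square estimator of $K_tx_t$. You instead prove a single policy-independent identity valid for \emph{every} admissible $u_{1:T}(\calS)$,
\begin{equation*}
h[\calS,u\of{1:T}{\calS}]=\E\left(\|x_1\|^2_{N_1}\right)+\sum_{t=1}^T\trace{W_tS_t}+\sum_{t=1}^T\E\left[\|u_t-K_tx_t\|^2_{M_t}\right],
\end{equation*}
obtained by telescoping $Q_t=S_t-N_{t+1}$ against the dynamics and completing the square once per stage (your Woodbury step identifying $A_t\tran S_tA_t-A_t\tran S_tB_tM_t\inv B_t\tran S_tA_t$ with $N_t$ is exactly right, and mirrors the algebra the paper buries inside its induction). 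You then minimize the residual sum term-by-term via the orthogonality decomposition $\E\|u_t-K_tx_t\|^2_{M_t}=\E\|u_t-K_t\hat{x}\of{t}{\calS}\|^2_{M_t}+\trace{\Theta_t\Sigma\att{t}{t}(\calS)}$. Crucially, you correctly identify and resolve the one point where stage-wise minimization could fail --- the dependence of $x_t$ on past controls --- by appealing to the control-invariance of the Kalman error $e_t$ and of $\Sigma\att{t}{t}(\calS)$ (consistent with Lemma~\ref{lem:covariance_riccati}, whose recursion involves no controls), so the per-stage lower bounds are policy-independent and simultaneously attained by the certainty-equivalent policy. What each approach buys: yours makes the separation principle transparent (the sensing-dependent term $\sum_t\trace{\Theta_t\Sigma\att{t}{t}(\calS)}$ appears as an additive, control-independent floor, which is precisely what Theorem~\ref{th:LQG_closed} exploits) and dispenses with induction; the paper's DP argument is more routine and handles admissibility and measurability implicitly through the Bellman recursion, at the cost of re-deriving the same completion-of-squares algebra inside each inductive step.
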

\begin{proof}[Proof of Lemma~\ref{lem:LQG_closed}] The proof follows Lemma~1's proof in~\cite{tanaka2015sdp}, and can also be found in~\cite[Appendix B]{tzoumas2018lqg}.
\end{proof}
\omitted{
\begin{proof}[Proof of Lemma~\ref{lem:LQG_closed}]
Let $h_t[\calS, u_{t:T}(\calS)]$ be the \LQG cost in Problem~\ref{prob:LQG} from time $t$ up to time $T$, i.e.,
\begin{equation*}
h_t[\calS, u_{t:T}(\calS)]\triangleq\sum_{k=t}^T\mathbb{E}(\|x_{k+1}(\calS)\|^2_{Q_t}+\|u_{k}(\calS)\|^2_{R_t}).
\end{equation*}
and define $g_t(\calS)\triangleq\min_{u_{t:T}(\calS)}h_t[\calS, u_{t:T}(\calS)]$.  
Clearly, $g_1(\calS)$ matches the \LQG cost in eq.~\eqref{eq:lem:opt_sensors}.

We complete the proof inductively.  Particularly, we first prove Lemma~\ref{lem:LQG_closed} for $t=T$, and then for any other $t\in \{1,2,\ldots, T-1\}$. To this end, we use the following observation: given any sensor set $\calS$, and any time $t \in \{1,2,\ldots, T\}$, \begin{equation}\label{eq:LQG_closed_reduction_step_1}
g_t(\calS)=\min_{u_{t}(\calS)} \left[ \mathbb{E}(\|x_{t+1}(\calS)\|^2_{Q_t}+\|u_t(\calS)\|^2_{R_t})+ g_{t+1}(\calS)\right],
\end{equation}
with boundary \validated{condition the}{condition} $g_{T+1}(\calS)=0$.  
Particularly, eq.~\eqref{eq:LQG_closed_reduction_step_1} holds since 
\begin{align*}
g_t(\calS)&=\min_{u_{t}(\calS)}\mathbb{E}\left\{\|x_{t+1}(\calS)\|^2_{Q_t}+\|u_t(\calS)\|^2_{R_t})+\right.\\
&\min_{u_{t+1:T}(\calS)}\left.h_{t+1}[\calS, u_{t+1:T}(\calS)]\right\},
\end{align*} 
where one can easily recognize the second summand to match the definition of
$g_{t+1}(\calS)$.

We prove Lemma~\ref{lem:LQG_closed} for $t=T$.  From eq.~\eqref{eq:LQG_closed_reduction_step_1}, for $t=T$,
\beal
 g_T(\calS)
\!\!\!&\!\!\!=\min_{u_T(\calS)}\left[ \mathbb{E}(\|x_{T+1}(\calS)\|^2_{Q_T}+\|u_T(\calS)\|^2_{R_T})\right]\\
\!\!\!&\!\!\!=\min_{u_T(\calS)}\left[ \mathbb{E}(\|A_Tx_T+B_Tu_T(\calS)+w_T\|^2_{Q_T}+\right.\\
&\left.\|u_T(\calS)\|^2_{R_T})\right],\label{eq:closed_aux_1}
\eeal
since $x_{T+1}(\calS)=A_Tx_T+B_Tu_T(\calS)+w_T$, as per eq.~\eqref{eq:system}; we note that for notational simplicity we drop henceforth the dependency of $x_T$ on $\calS$ since $x_T$ is independent of $u_T(\calS)$, which is the variable under optimization in the optimization problem~\eqref{eq:closed_aux_1}. Developing eq.~\eqref{eq:closed_aux_1} we get:
\beal
&&  g_T(\calS)\\
\!\!\!&\!\!\!=\!\!\!&\!\!\!\min_{u_T(\calS)}\left[\mathbb{E}(u_T(\calS)\tran B_T\tran Q_T B_Tu_T(\calS)+w_T\tran Q_Tw_T+\right.\\
&&\left. x_T\tran A_T\tran Q_TA_Tx_T+ 2x_T\tran A_T\tran Q_T B_Tu_T(\calS)+ \right.\\
&&\left. 2x_T\tran A_T\tran Q_T w_T+2 u_T(\calS)\tran B_T\tran Q_Tw_T+\|u_T(\calS)\|^2_{R_T})\right]\\
\!\!\!&\!\!\!=\!\!\!&\!\!\!\min_{u_T(\calS)}\left[ \mathbb{E}(u_T(\calS)\tran B_T\tran Q_T B_Tu_T(\calS)+\|w_T\|_{Q_T}^2+ \right.\\
&&\left.x_T\tran A_T\tran Q_TA_Tx_T+ 2x_T\tran A_T\tran Q_T B_Tu_T(\calS)+\|u_T\|^2_{R_T})\right],\label{eq:closed_aux_2}
\eeal
where the latter equality holds since $w_T$ has zero mean and $w_T$, $x_T$, and $u_T(\calS)$ are independent.  From eq.~\eqref{eq:closed_aux_2}, rearranging the terms, and using the notation in eq.~\eqref{eq:control_riccati},
\beal
&& g_T(\calS) \\
\!\!\!&\!\!\!=\!\!\!&\!\!\!\min_{u_T(\calS)}\left[ \mathbb{E}(u_T(\calS)\tran (B_T\tran Q_T B_T+R_T)u_T(\calS)+\right.\\&&\left. \|w_T\|_{Q_T}^2+x_T\tran A_T\tran Q_TA_Tx_T+ 2x_T\tran A_T\tran Q_T B_Tu_T(\calS)\right]\\
\!\!\!&\!\!\!=\!\!\!&\!\!\!\min_{u_T(\calS)}\left[\mathbb{E}(\|u_T(\calS)\|_{M_T}^2+\|w_T\|_{Q_T}^2+ x_T\tran A_T\tran Q_TA_Tx_T+\right.\\&&\left. 2x_T\tran A_T\tran Q_T B_Tu_T(\calS)\right]\\
\!\!\!&\!\!\!=\!\!\!&\!\!\!\min_{u_T(\calS)}\left[\mathbb{E}(\|u_T(\calS)\|_{M_T}^2+\|w_T\|_{Q_T}^2+ x_T\tran A_T\tran  Q_TA_Tx_T-\right.\\&&\left.2x_T\tran (-A_T\tran Q_T B_TM_T\inv) M_Tu_T(\calS)\right]\\
\!\!\!&\!\!\!=\!\!\!&\!\!\!\min_{u_T(\calS)}\left[ \mathbb{E}(\|u_T(\calS)\|_{M_T}^2+\|w_T\|_{Q_T}^2+ x_T\tran A_T\tran  Q_TA_Tx_T-\right.\\&&\left.2x_T\tran K_T\tran M_Tu_T(\calS)\right]\\
%
\!\!\!&\!\!\!\overset{(i)}{=}\!\!\!&\!\!\!\min_{u_T(\calS)}\left[ \mathbb{E}(\|u_T(\calS)-K_Tx_T\|_{M_T}^2+\|w_T\|_{Q_T}^2+\right.\\&&\left. x_T\tran (A_T\tran  Q_TA_T- K_T\tran M_T K_T)x_T\right]\\
\!\!\!&\!\!\!=\!\!\!&\!\!\!\min_{u_T(\calS)}\left(\mathbb{E}(\|u_T(\calS)-K_Tx_T\|_{M_T}^2+\|w_T\|_{Q_T}^2+\right.\\&&\left. x_T\tran (A_T\tran  Q_TA_T- \Theta_T)x_T\right)\\
\!\!\!&\!\!\!=\!\!\!&\!\!\!\min_{u_T(\calS)}\left[ \mathbb{E}(\|u_T(\calS)-K_Tx_T\|_{M_T}^2+\|w_T\|_{Q_T}^2+ \|x_T\|_{N_T}^2\right]\\
\!\!\!&\!\!\!\overset{(ii)}{=}\!\!\!&\!\!\!\min_{u_T(\calS)} \mathbb{E}(\|u_T(\calS)-K_Tx_T\|_{M_T}^2)+\trace{W_TQ_T}+ \\&& \mathbb{E}( \|x_T\|_{N_T}^2),\label{eq:closed_aux_3}
\eeal
where 
\validated{the latter equality}{equality (i) follows from completion of squares, and equality (ii)} holds since $\mathbb{E}(\|w_T\|_{Q_T}^2)=\mathbb{E}\left[\trace{w_T\tran Q_Tw_T}\right]=\trace{\mathbb{E}(w_T\tran w_T)Q_T}=\trace{W_TQ_T}$.  
Now we note that 
\begin{align}
&\min_{u_T(\calS)} \mathbb{E}(\|u_T(\calS)-K_Tx_T\|_{M_T}^2)\nonumber\\
&=\mathbb{E}(\|K_T\hat{x}_{T}(\calS)-K_Tx_T\|_{M_T}^2) \nonumber\\
&=\trace{\Theta_T \Sigma\att{T}{T}(\calS)}, \label{eq:closed_aux_3b}
\end{align}
since $\hat{x}_{T}(\calS)$ is the Kalman estimator of the state $x_T$, i.e., the minimum mean square estimator of $x_T$, which implies that $K_T\hat{x}_{T}(\calS)$ is the minimum mean square estimator of $K_T{x}_{T}(\calS)$~\cite[Appendix~E]{bertsekas2005dynamic}. Substituting~\eqref{eq:closed_aux_3b} back into eq.~\eqref{eq:closed_aux_3}, 
we get: 
\begin{align*}
g_T(\calS)  = \mathbb{E}( \|x_T\|_{N_T}^2) + \trace{\Theta_T \Sigma\att{T}{T}(\calS)} + \trace{W_TQ_T},
\end{align*}
which proves that Lemma~\ref{lem:LQG_closed} holds for $t=T$. 

We now prove that if Lemma~\ref{lem:LQG_closed} holds for $t=l+1$, it also holds for $t=l$. To this end, assume eq.~\eqref{eq:LQG_closed_reduction_step_1} holds for $t=l+1$.  Using the notation in eq.~\eqref{eq:control_riccati},
\beal
g_l(\calS)
\!\!\!&\!\!\!=\!\!\!&\!\!\!\min_{u_l(\calS)} \left[ \mathbb{E}(\|x_{l+1}(\calS)\|^2_{Q_l}+\|u_l(\calS)\|^2_{R_l})+ g_{l+1}(\calS)\right]\\
\!\!\!&\!\!\!=\!\!\!&\!\!\!\min_{u_l(\calS)} \left\{ \mathbb{E}(\|x_{l+1}(\calS)\|^2_{Q_l}+\|u_l(\calS)\|^2_{R_l})+ \right.\\
&&\left.\mathbb{E}(\|x_{l+1}(\calS)\|_{N_{l+1}}^2)+ \sum_{k=l+1}^T\left[\trace{\Theta_k\Sigma\att{k}{k}(\calS)}+\right.\right.\\
&&\left.\left.\trace{W_kS_k}\right]\right\}\\
\!\!\!&\!\!\!=\!\!\!&\!\!\!\min_{u_l(\calS)} \left\{ \mathbb{E}(\|x_{l+1}(\calS)\|^2_{S_l}+\|u_l(\calS)\|^2_{R_l})+\right.\\
&&\left.\sum_{k=l+1}^T[\trace{\Theta_k\Sigma\att{k}{k}(\calS)}+\trace{W_kS_k}]\right\}\\
\!\!\!&\!\!\!=\!\!\!&\!\!\!\sum_{k=l+1}^T[\trace{\Theta_k\Sigma\att{k}{k}(\calS)}+\trace{W_kS_k}]+\\
&&\min_{u_l(\calS)} \mathbb{E}(\|x_{l+1}(\calS)\|^2_{S_l}+\|u_l(\calS)\|^2_{R_l}).\label{eq:closed_aux_10}
\eeal

\validated{
In eq.~\eqref{eq:closed_aux_10},  for the last summand in the last right-hand-side, by following the same steps as for the proof of Lemma~\ref{lem:LQG_closed} for $t=T$, we have:}{In eq.~\eqref{eq:closed_aux_10}, the minimization in the last summand can be solved 
by following the same steps as for the proof of Lemma~\ref{lem:LQG_closed} for $t=T$, leading to:
}
\beal
&\min_{u_l(\calS)} \mathbb{E}(\|x_{l+1}(\calS)\|^2_{S_l}+\|u_l(\calS)\|^2_{R_l})=\\
&\mathbb{E}(\|x_l\|_{N_l}^2)+\trace{\Theta_l\Sigma\att{l}{l}(\calS)}+\trace{W_lQ_l},\label{eq:closed_aux_5}
\eeal
and $u_l(\calS)=K_l\hat{x}_{l}(\calS)$.  Therefore, by substituting eq.~\eqref{eq:closed_aux_5} back \validated{to}{into} eq.~\eqref{eq:closed_aux_10}, we get:
\beal
g_l(\calS)
\!\!\!&\!\!\!=\!\!\!&\!\!\! \mathbb{E}(\|x_l\|_{N_l}^2)+\sum_{k=l}^T[\trace{\Theta_k\Sigma\att{k}{k}(\calS)}+\trace{W_kS_k}].\label{eq:closed_aux_4}
\eeal
which proves that if Lemma~\ref{lem:LQG_closed} holds for $t=l+1$, it also holds for $t=l$.
By induction, this also proves that Lemma~\ref{lem:LQG_closed} holds for $l=1$, and we already observed that
 $g_1(\calS)$ matches the original \LQG cost in eq.~\eqref{eq:lem:opt_sensors}, hence concluding the proof.
\end{proof}
}

\begin{proof}[Proof of part~(1) of Theorem~\ref{th:LQG_closed}]
Eq.~\eqref{eq:opt_sensors} is a direct consequence of eq.~\eqref{eq:lem:opt_sensors}, since the value of Problem~\ref{prob:LQG} is equal to $\min_{\calS\subseteq \calV, \sensorCost(\calS)\leq \sensorBudget} h[\calS, u^\star_{1:T}(\calS)]$, and both $\mathbb{E}(\|x_1\|_{N_1})=\trace{\initialCovariance N_1}$ and $\sum_{t=1}^T\trace{W_tS_t}$ are independent of $\calS$.  Finally, eq.~\eqref{eq:opt_control} directly follows from eq.~\eqref{eq:lem:opt_control}.  

\end{proof}

\bigskip

\noindent \textit{B.1.~Proof of part~(2) of Theorem~\ref{th:LQG_closed}}

\begin{mylemma}\label{lem:proof_part_2_th_perf} $\optS$, and $\optUoneT$ are a solution to Problem~\ref{prob:minCostLQG} if and only if they are a solution to
\begin{equation}
\label{eq:equiv_minCostForBoundedLQG}
\min_{\scriptsize \calS \subseteq \calV, u\of{1:T}{\calS}} \sensorCost(\calS)\validated{:}{, 
 \;\; \subjectTo}\;\;
\min_{\scriptsize u\of{1:T}{{\calS}}  } h\left[{\calS},u\of{1:T}{{\calS}}\right] \leq \kappa.
\end{equation}

\end{mylemma}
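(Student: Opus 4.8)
The plan is to exploit the structural fact that in both optimization problems the objective $\sensorCost(\calS)$ depends only on the sensor set $\calS$ and is completely insensitive to the control policy $u\of{1:T}{\calS}$; the policy enters only through the performance constraint. Hence, for a fixed $\calS$, selecting the policy is a pure feasibility question, and the whole argument reduces to converting that feasibility question into a condition on $\calS$ alone.

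First I would invoke Lemma~\ref{lem:LQG_closed}, which guarantees that the inner minimization $g(\calS)\triangleq\min_{u\of{1:T}{\calS}} h[\calS, u\of{1:T}{\calS}]$ is in fact \emph{attained}, with an explicit minimizer given by the \LQG policy $u_t=K_t\hat{x}\of{t}{\calS}$. Because the minimum is achieved, for every fixed $\calS$ the two statements ``there exists a policy $u\of{1:T}{\calS}$ with $h[\calS, u\of{1:T}{\calS}]\leq \kappa$'' and ``$g(\calS)\leq \kappa$'' are logically equivalent. This already shows that Problem~\ref{prob:minCostLQG} and eq.~\eqref{eq:equiv_minCostForBoundedLQG} admit exactly the same set of feasible sensor sets, namely $\{\calS\subseteq\calV:\,g(\calS)\leq\kappa\}$; since they also share the identical objective $\sensorCost(\calS)$, they share the same optimal value and the same collection of cost-minimizing sets $\optS$.

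To upgrade this to the pairwise ``if and only if'', I treat the two directions separately. For the forward direction, if $(\optS,\optUoneT)$ solves Problem~\ref{prob:minCostLQG}, then $g(\optS)\leq h[\optS,\optUoneT]\leq\kappa$, so the pair is feasible for eq.~\eqref{eq:equiv_minCostForBoundedLQG}; and cost-optimality transfers because every pair feasible for eq.~\eqref{eq:equiv_minCostForBoundedLQG} has $g(\calS)\leq\kappa$ and therefore, by attainment, admits a companion policy of the same cost that is feasible for Problem~\ref{prob:minCostLQG}. For the converse, any pair feasible for Problem~\ref{prob:minCostLQG} is feasible for eq.~\eqref{eq:equiv_minCostForBoundedLQG}, so the optimality inequality carries over verbatim; the remaining point is that the policy paired with $\optS$ must meet the original constraint, which holds once we take it to be the inner minimizer, for which Lemma~\ref{lem:LQG_closed} gives $h[\optS,u^\star]=g(\optS)\leq\kappa$.

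The main obstacle --- indeed the only delicate point --- is precisely this coupling between the sensor set and the control policy in the backward direction: the reformulated constraint $g(\calS)\leq\kappa$ restricts only $\calS$ and says nothing directly about the outer policy variable, so the equivalence at the level of full pairs rests entirely on the attainment of the inner minimum supplied by Lemma~\ref{lem:LQG_closed}. Once attainment is secured, both the feasibility condition and the optimality condition align across the two formulations, completing the argument. This equivalence is exactly what later allows the constraint to be rewritten purely as a set-function condition $\sum_{t=1}^{T}\trace{\Theta_t\Sigma\att{t}{t}(\calS)}\leq\bar{\kappa}$, yielding eq.~\eqref{eq:opt_sensors_minimum}.
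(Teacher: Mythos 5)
Your proposal is correct and takes essentially the same route as the paper's proof: both arguments rest on the facts that the objective $\sensorCost(\calS)$ is policy-independent and that, by Lemma~\ref{lem:LQG_closed}, the inner minimum over policies is attained by the \LQG policy, so the two formulations share the feasible sensor sets $\{\calS\subseteq\calV : g(\calS)\leq\kappa\}$ and feasibility/optimality transfer in both directions. The only presentational difference is that the paper packages this as two symmetric contradiction arguments against a hypothesized better solution, whereas you argue directly---and you are in fact slightly more explicit than the paper about the attainment step (the paper's inference that $g(\widehat{\calS})\leq\kappa$ implies $h(\widehat{\calS},\hatUoneT)\leq\kappa$ tacitly takes $\hatUoneT$ to be the inner minimizer, exactly the point you flag).
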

\begin{proof}[Proof of Lemma~\ref{lem:proof_part_2_th_perf}]
We prove the lemma \validated{, using the method of contradiction}{by contradiction}. 
Particularly, let $\optS$ and {$\optUoneT$} be a solution to Problem~\ref{prob:minCostLQG}, and assume 
{by contradiction} that they are not to eq.~\eqref{eq:equiv_minCostForBoundedLQG},
{which instead has solution $\widehat{\calS}$ and $\hatUoneT$. 
By optimality of $\widehat{\calS}$ and $\hatUoneT$ (and suboptimality of $\optS$ and $\optUoneT$) in eq.~\eqref{eq:equiv_minCostForBoundedLQG}, it follows 
$\sensorCost(\widehat{\calS})<\sensorCost(\optS)$.} In addition, $g(\widehat{\calS})\leq \kappa$, since $(\widehat{\calS},\validated{\widehat{u}_1, \widehat{u}_2, \ldots, \widehat{u}_T}{\hatUoneT})$ must be feasible for  eq.~\eqref{eq:equiv_minCostForBoundedLQG}.  However, the latter implies $h\left(\widehat{\calS},\validated{}{\hatUoneT}\right)\leq \kappa$. 
Therefore, $(\widehat{\calS},\validated{\widehat{u}_1, \widehat{u}_2, \ldots, \widehat{u}_T}{\hatUoneT})$ is feasible for Problem~\ref{prob:minCostLQG} \validated{.  But since $\sensorCost(\widehat{\calS})<\sensorCost(\optS)$, while $\optS$ is assumed to be a solution to Problem~\ref{prob:minCostLQG}, that is, it must be $\sensorCost(\optS)\leq \sensorCost(\widehat{\calS})$, we get a contraction.
}{and has a better objective value with respect to the optimal solution $(\optS,\optUoneT)$ (we already observed $\sensorCost(\widehat{\calS})<\sensorCost(\optS)$), leading to contradiction.}

For the other direction, now let  $\optS$ and  $\validated{u_1^\star, u_2^\star, \ldots, u_T^\star}{\optUoneT}$ be a solution to eq.~\eqref{eq:equiv_minCostForBoundedLQG}, and assume that they are not to Problem~\ref{prob:minCostLQG}, 
{which instead has solution $(\widehat{\calS},\hatUoneT)$. 
By optimality of $(\widehat{\calS},\hatUoneT)$ (and suboptimality of $\optS$ and $\optUoneT$) in Problem~\ref{prob:minCostLQG}, it follows 
$\sensorCost(\widehat{\calS})<\sensorCost(\optS)$.}  
  In addition, $h\left(\widehat{\calS},\validated{}{\hatUoneT}\right)\leq \kappa$, since $(\widehat{\calS},\validated{}{\hatUoneT})$ must be feasible for Problem~\ref{prob:minCostLQG}, and, as a result, $g(\widehat{\calS})\leq \kappa$.
Therefore, $(\widehat{\calS},\validated{\widehat{u}_1, \widehat{u}_2, \ldots, \widehat{u}_T}{\hatUoneT})$ is feasible for eq.~\eqref{eq:equiv_minCostForBoundedLQG} {and has a better objective value with respect to the optimal solution $(\optS,\optUoneT)$ (we already observed $\sensorCost(\widehat{\calS})<\sensorCost(\optS)$), leading to contradiction.}
\end{proof}

\begin{proof}[Proof of  part~(2) of \validated{Theorem~\ref{th:approx_bound}}{Theorem~\ref{th:LQG_closed}}]
The proof follows from Lemma~\ref{lem:LQG_closed} and Lemma~\ref{lem:proof_part_2_th_perf}.  
\omitted{Particularly, similarly to the proof of Theorem~\ref{th:LQG_closed}'s part~(1), Lemma~\ref{lem:LQG_closed}, along with eq.~\eqref{eq:lem:opt_sensors} and the fact that $\mathbb{E}(\|x_1\|_{N_1})=\trace{\initialCovariance N_1}$, implies that if the sensor set ${\calS}^\star$ and the controllers $\validated{u_1^\star, u_2^\star, \ldots, u_T^\star}{\optUoneT}$ are a solution to the optimization problem in eq.~\eqref{eq:equiv_minCostForBoundedLQG}, then ${\calS}^\star$ and the controllers $\validated{u_1^\star, u_2^\star, \ldots, u_T^\star}{\optUoneT}$ can be computed in cascade as follows: 
\begin{align}
{\calS}^\star &\in \argmin_{\scriptsize \calS \subseteq \calV} \sensorCost(\calS)\validated{:}{, 
 \;\; \subjectTo}\;\; \text{tr}[\Theta_t\Sigma\att{t}{t}({\calS})]\leq \nonumber\\
&\qquad\qquad\qquad\;\;\;\;\; \kappa-\trace{\initialCovariance N_1}-\sum_{t=1}^T\trace{W_tS_t}, \label{eq:opt_sensors_minimum_lem} \\
{u}_t^\star&=K_t\hat{x}\of{t}{{\calS}^\star}, \quad t=1,\ldots,T.\label{eq:opt_control_minimum_lem}
\end{align}
In addition, Lemma~\ref{lem:proof_part_2_th_perf} implies that $(\optS, \validated{u_1^\star,u_2^\star,$ $\ldots,u_T^\star}{\optUoneT})$ is a solution to Problem~\ref{prob:minCostLQG}. As a result, eqs.~\eqref{eq:opt_sensors_minimum}-\eqref{eq:opt_control_minimum} hold true.
}
\end{proof}

\section*{Appendix C: Proof of Theorem~\ref{th:hardness}}

Consider a problem instance for Problem~\ref{prob:LQG} and Problem~\ref{prob:minCostLQG}, where $T=1$, and $A_1=B_1=C_1=Q_1=R_1=I$.  Then, $\Theta_1=\eye/2$, and, as a result, the objective function in eq.~\eqref{eq:opt_sensors} becomes $1/2\text{tr}[\Sigma\att{1}{1}(\calS)]$.  Now, choosing $\Sigma\att{1}{1}(\calS)$ to be the steady state Kalman filtering matrix defined in \cite[Theorem~2]{ye2018complexity}, as well as, $c(\calS)$, $b$ be as in \cite[Theorem~2]{ye2018complexity}, makes eq.~\eqref{eq:opt_sensors} and the optimization problem in \cite{ye2018complexity} equivalent.  But, the latter is inapproximable in polynomial time \cite[Theorem~2]{ye2018complexity} (namely, unless \emph{NP}$=$\emph{P}, there is no polynomial time algorithm that guarantees a constant suboptimality bound).  Therefore, eq.~\eqref{eq:opt_sensors} is too, and due to Theorem~\ref{th:LQG_closed} both Problem~\ref{prob:LQG} and Problem~\ref{prob:minCostLQG} as well.

\section*{Appendix D: Proof of Theorem~\ref{th:approx_bound}}

For any~$\calS$, let $f(\calS)\triangleq\sum_{t=1}^T\text{tr}[\Theta_t\Sigma\att{t}{t}(\calS)]$ be the objective function in eq.~\eqref{eq:opt_sensors}, $\optS$ be a solution in eq.~\eqref{eq:opt_sensors}, and $\optBudget\triangleq\sensorCost(\optS)$.  Let $\algStwo$ be the set Algorithm~\ref{alg:greedy_nonUniformCosts} constructs by the end of line~\ref{line:end_if_condition_less}; let $\calG\triangleq\algStwo$. 
Let $s_i$ be the $i$-th element added in $\calG$ during the $i$-th iteration of Algorithm~\ref{alg:greedy_nonUniformCosts}'s ``while loop'' (lines~\ref{line:while_nonUniformCosts}-\ref{line:end_while_nonUniformCosts}).  Let  $\calG_i\triangleq\{s_1,s_2,\ldots, s_i\}$.
 Finally, consider Algorithm~\ref{alg:greedy_nonUniformCosts}'s ``while loop''  terminates after $l+1$ iterations.  

Algorithm~\ref{alg:greedy_nonUniformCosts}'s ``while loop'' terminates: (i)~when $\calV'=\emptyset$, that is, when all available sensors in $\calV$ can been chosen by Algorithm~\ref{alg:greedy_nonUniformCosts} as active while satisfying the budget constraint $\sensorBudget$; and (ii)~when $\sensorCost(\calG_{l+1})>\sensorBudget$, that is, when the addition of $s_{l+1}$ in~$\calG_l$ makes the cost of $\calG_{l+1}$ to  {{violate} the budget $\sensorBudget$.} Henceforth, we focus on the second scenario, which implies $s_{l+1}$ will be removed by the ``if'' statement in Algorithm~\ref{alg:greedy_nonUniformCosts}'s lines~\ref{line:if_condition_less}--\ref{line:end_if_condition_less} and, as a result, $\calG_l=\algStwo$.

\begin{mylemma}[Generalization of {\cite[Lemma~2]{krause2005note}}]\label{lem:gen_lemma_2}
For $i=1,2,\ldots, l+1$, it holds
$$f(\calG_{i-1})-f(\validated{G}{\calG}_{i})\geq \frac{\gamma_f\sensorCost(s_i)}{\optBudget} (f(\validated{G}{\calG}_{i-1})-f(\optS)).$$
\end{mylemma}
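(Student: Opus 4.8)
The plan is to mirror the proof of Lemma~2 in~\cite{krause2005note}, inserting the supermodularity ratio $\gamma_f$ at exactly the point where that argument uses (exact) supermodularity. Throughout I would rely on three facts: $f$ is non-increasing (Lemma~\ref{prop:monotonicity}); the greedy rule in Algorithm~\ref{alg:greedy_nonUniformCosts}'s line~\ref{line:best_a_nonUniformCosts} picks $s_i$ maximizing the \emph{cost-normalized} marginal drop $[f(\calG_{i-1})-f(\calG_{i-1}\cup\{a\})]/\sensorCost(a)$ over $a\in\calV'$; and $\optBudget=\sensorCost(\optS)$. I would enumerate $\optS=\{o_1,o_2,\ldots,o_k\}$.

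First I would use monotonicity to replace the gap $f(\calG_{i-1})-f(\optS)$ by a marginal-drop quantity: since $\optS\subseteq\calG_{i-1}\cup\optS$, non-increasingness gives $f(\calG_{i-1}\cup\optS)\leq f(\optS)$, hence
\begin{equation*}
f(\calG_{i-1})-f(\optS)\leq f(\calG_{i-1})-f(\calG_{i-1}\cup\optS).
\end{equation*}
I would then telescope the right-hand side by inserting $o_1,\ldots,o_k$ one at a time, writing it as $\sum_{j=1}^{k}\big[f(\calG_{i-1}\cup\{o_1,\ldots,o_{j-1}\})-f(\calG_{i-1}\cup\{o_1,\ldots,o_j\})\big]$.

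Next I would bound each telescoped term by the marginal drop at the \emph{smaller} set $\calG_{i-1}$. Applying Definition~\ref{def:super_ratio} with $\mathcal{A}=\calG_{i-1}$, $\mathcal{B}=\calG_{i-1}\cup\{o_1,\ldots,o_{j-1}\}$, and $\elem=o_j$ yields
\begin{equation*}
f(\calG_{i-1}\cup\{o_1,\ldots,o_{j-1}\})-f(\calG_{i-1}\cup\{o_1,\ldots,o_j\})\leq\frac{1}{\gamma_f}\big[f(\calG_{i-1})-f(\calG_{i-1}\cup\{o_j\})\big],
\end{equation*}
which is precisely where the $1/\gamma_f$ factor enters. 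Elements $o_j$ already contained in $\calG_{i-1}$ make both sides zero and can be dropped; the remaining $o_j$ lie in $\calV'$ and are thus legitimate greedy candidates. Summing over $j$ and combining with the monotonicity bound gives $f(\calG_{i-1})-f(\optS)\leq\frac{1}{\gamma_f}\sum_{j=1}^{k}\big[f(\calG_{i-1})-f(\calG_{i-1}\cup\{o_j\})\big]$.

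Finally I would invoke the cost-normalized greedy rule: since $s_i$ maximizes the ratio of marginal drop to cost, each summand satisfies $f(\calG_{i-1})-f(\calG_{i-1}\cup\{o_j\})\leq\frac{\sensorCost(o_j)}{\sensorCost(s_i)}\big[f(\calG_{i-1})-f(\calG_i)\big]$. Substituting and using $\sum_{j=1}^{k}\sensorCost(o_j)=\sensorCost(\optS)=\optBudget$ gives
\begin{equation*}
f(\calG_{i-1})-f(\optS)\leq\frac{\optBudget}{\gamma_f\,\sensorCost(s_i)}\big[f(\calG_{i-1})-f(\calG_i)\big],
\end{equation*}
and rearranging recovers the claim. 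The step I expect to be the crux is the correct, one-directional application of the supermodularity ratio inside the telescoping sum --- ensuring the smaller set $\calG_{i-1}$ sits in the numerator of Definition~\ref{def:super_ratio} so the inequality points the right way --- together with the bookkeeping that lets already-selected elements of $\optS$ drop out harmlessly.
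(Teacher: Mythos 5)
Your proposal is correct and follows essentially the same route as the paper's own proof: monotonicity of $f$, a telescoping decomposition of $f(\calG_{i-1})-f(\calG_{i-1}\cup\optS)$, the supermodularity ratio applied with the smaller set $\calG_{i-1}$ in the numerator, the cost-normalized greedy rule, and finally bounding the total cost of the telescoped elements by $\optBudget$. The only cosmetic difference is that you enumerate all of $\optS$ and drop the elements already in $\calG_{i-1}$, whereas the paper telescopes over $\optS\setminus\calG_{i-1}$ directly (so its cost sum is bounded by $\optBudget$ as an inequality rather than an equality), which changes nothing in the argument.
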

\begin{proof}[Proof of Lemma~\ref{lem:gen_lemma_2}]
Due to the monotonicity of the cost function $f$ in eq.~\eqref{eq:opt_sensors} (Lemma~\ref{prop:monotonicity}), 
\begin{align*}
f(\calG_{i-1})-f(\optS)&\leq f(\calG_{i-1})-f(\optS \cup \calG_{i-1})\\
&=f(\calG_{i-1})-f[(\optS\setminus \calG_{i-1}) \cup \calG_{i-1}].
\end{align*}
Let $\{z_1, z_2, \ldots, z_m\}\triangleq\optS\setminus \calG_{i-1}$, and  also let 
$$d_j\triangleq f(\calG_{i-1}\cup \{z_1, z_2, \ldots, z_{j-1}\})-f(\calG_{i-1}\cup \{z_1, z_2, \ldots, z_j\}),$$
for $j=1,2,\ldots,m$.  Then, $f(\calG_{i-1})-f(\optS)\leq \sum_{j=1}^{m}d_j$. Now,

\begin{align*}
\frac{d_j}{\sensorCost(z_j)}\leq \frac{f(\calG_{i-1})-f(\calG_{i-1}\cup \{z_j\})}{\gamma_f \sensorCost(z_j)}
\leq \frac{f(\calG_{i-1})-f(\calG_{i})}{\gamma_f\sensorCost(s_i)},
\end{align*}
where the first inequality holds due to the Definition~\ref{def:super_ratio} of $\gamma_f$, and the second  due to the greedy rule (Algorithm~\ref{alg:greedy_nonUniformCosts}'s line~\ref{line:best_a_nonUniformCosts}) and the definitions of $\calG_i$, and $s_i$.  Since $\sum_{j=1}^{m}\sensorCost(z_j)\leq \optBudget$,
\belowdisplayskip =-12pt$$f(\calG_{i-1})-f(\optS)\leq \sum_{j=1}^{m}d_j\leq \optBudget\frac{f(\calG_{i-1})-f(\calG_{i})}{\gamma_f\sensorCost(s_i)}.$$
\end{proof}

\begin{mylemma}[Adapation of {\cite[Lemma~3]{krause2005note}}]\label{lem:gen_lemma_3}
For $i=1,2,\ldots, l+1$, 
$$f(\emptyset)-f(G_{i})\geq \left[1-\prod_{j=1}^{i}\left(1-\frac{\gamma_f\sensorCost(s_j)}{\optBudget}\right)\right] [f(\emptyset)-f(\optS)].$$
\end{mylemma}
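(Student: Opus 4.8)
The plan is to prove the bound by induction on $i$, after recasting Lemma~\ref{lem:gen_lemma_2} as a one-step recursion. First I would set $\delta_i \triangleq f(\emptyset)-f(\calG_i)$ and $\Delta \triangleq f(\emptyset)-f(\optS)$; both are nonnegative because $f$ is non-increasing (Lemma~\ref{prop:monotonicity}) together with $\emptyset \subseteq \calG_i$ and $\emptyset \subseteq \optS$. Writing $\alpha_i \triangleq \gamma_f\,\sensorCost(s_i)/\optBudget$, the target inequality becomes $\delta_i \ge \big[1-\prod_{j=1}^{i}(1-\alpha_j)\big]\Delta$.

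Next I would rewrite Lemma~\ref{lem:gen_lemma_2} in these terms. Since $f(\calG_{i-1})-f(\calG_i)=\delta_i-\delta_{i-1}$ and $f(\calG_{i-1})-f(\optS)=\Delta-\delta_{i-1}$, the lemma reads $\delta_i-\delta_{i-1}\ge \alpha_i(\Delta-\delta_{i-1})$, i.e.
\begin{equation*}
\delta_i \;\ge\; (1-\alpha_i)\,\delta_{i-1}+\alpha_i\,\Delta .
\end{equation*}
The base case $i=1$ is immediate: $\calG_0=\emptyset$ forces $\delta_0=0$, so $\delta_1\ge \alpha_1\Delta=\big[1-(1-\alpha_1)\big]\Delta$, which is the claim for $i=1$. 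For the inductive step I would assume $\delta_{i-1}\ge\big[1-\prod_{j=1}^{i-1}(1-\alpha_j)\big]\Delta$ and substitute it into the recursion; expanding with the identities $(1-\alpha_i)+\alpha_i=1$ and $(1-\alpha_i)\prod_{j=1}^{i-1}(1-\alpha_j)=\prod_{j=1}^{i}(1-\alpha_j)$ yields exactly $\delta_i\ge\big[1-\prod_{j=1}^{i}(1-\alpha_j)\big]\Delta$, closing the induction.

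The one delicate point—and the main obstacle—is the direction of the inequality in the inductive substitution. Replacing $\delta_{i-1}$ by its (lower) inductive bound inside the term $(1-\alpha_i)\delta_{i-1}$ is valid only when the coefficient $1-\alpha_i=1-\gamma_f\,\sensorCost(s_i)/\optBudget$ is nonnegative. This is not a mere technicality: for the feasible iterates ($\calG_i$ with $i\le l$) optimality of $\optS$ gives $\delta_i\le\Delta$, so if some $\alpha_i>1$ then the asserted right-hand side $\big[1-\prod_{j\le i}(1-\alpha_j)\big]\Delta$ would exceed $\Delta$ and the bound could fail. Hence I would first establish $\gamma_f\,\sensorCost(s_i)\le\optBudget$ for every selected $s_i$—leveraging $\gamma_f\in[0,1]$ and comparing each greedy element's cost against the optimal cost $\optBudget=\sensorCost(\optS)$—and only then run the telescoping induction above; with that nonnegativity secured, the remaining manipulations are routine.
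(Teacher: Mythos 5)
Your induction is, modulo notation, exactly the paper's argument: the paper proves the base case from Lemma~\ref{lem:gen_lemma_2} with $i=1$ and, for $i>1$, writes $f(\emptyset)-f(\calG_i)\geq\left(1-\frac{\gamma_f\sensorCost(s_i)}{\optBudget}\right)[f(\emptyset)-f(\calG_{i-1})]+\frac{\gamma_f\sensorCost(s_i)}{\optBudget}[f(\emptyset)-f(\optS)]$ and then substitutes the induction hypothesis, which is precisely your recursion $\delta_i\geq(1-\alpha_i)\delta_{i-1}+\alpha_i\Delta$. You are also right that the substitution tacitly requires $1-\gamma_f\sensorCost(s_i)/\optBudget\geq 0$, a point the paper's proof passes over in silence. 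The genuine gap is in your proposed repair: the inequality $\gamma_f\sensorCost(s_i)\leq\optBudget$ for every selected $s_i$ cannot be established and is false in general. Algorithm~\ref{alg:greedy_nonUniformCosts} only enforces the aggregate constraint $\sensorCost(\calG_l)\leq\sensorBudget$ (so at best $\sensorCost(s_i)\leq\sensorBudget$ for $i\leq l$, and no individual bound at all on $s_{l+1}$, since the ``while'' loop draws candidates from $\calV'$ without filtering by individual cost), whereas $\optBudget=\sensorCost(\optS)$ can be arbitrarily smaller than $\sensorBudget$: the optimal set may be a single cheap, highly informative sensor, while the greedy rule, maximizing gain per unit cost, legitimately selects an expensive sensor with $\sensorCost(s_i)>\sensorCost(\optS)$. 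So ``comparing each greedy element's cost against $\optBudget$'' cannot close the argument, and your proof stalls at the step you yourself single out as essential.

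Two remarks to calibrate how serious this is. First, the failure mode is confined: if $\gamma_f\sensorCost(s_i)>\optBudget$ while $f(\calG_{i-1})>f(\optS)$, then Lemma~\ref{lem:gen_lemma_2} forces $f(\calG_i)<f(\optS)$, contradicting optimality of $\optS$ whenever $\calG_i$ is feasible; hence for $i\leq l$ a negative coefficient can only occur in the degenerate case $f(\calG_{i-1})=f(\optS)$ (where, as you observe, the stated product bound can indeed overshoot $\Delta$), and for $i=l+1$ feasibility is lost altogether, so the worry is real but narrow. Second, the standard airtight fix---used in the cited source \cite{krause2005note}---is to normalize by the budget $\sensorBudget$ rather than by $\optBudget$: under the mild (and natural) assumption that every sensor is individually feasible, $\sensorCost(s_j)\leq\sensorBudget$ holds for all selected elements, every factor $1-\gamma_f\sensorCost(s_j)/\sensorBudget$ is nonnegative, and the resulting weaker product bound still delivers both bounds of Theorem~\ref{th:approx_bound}, since $\sensorCost(\calG_{l+1})>\sensorBudget$ gives $1-e^{-\gamma_f}$ and the $i=l$ case gives $1-e^{-\gamma_f\sensorCost(\calG_l)/\sensorBudget}$ directly. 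Replacing your unprovable claim with this renormalization would make your plan complete.
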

\begin{proof}[Proof of Lemma~\ref{lem:gen_lemma_3}]
We complete the proof inductively. For $i=1$, we need to prove $f(\emptyset)-f(G_{1})\geq \gamma_f\sensorCost(s_1)/\optBudget [f(\emptyset)-f(\optS)]$, which follows from Lemma~\ref{lem:gen_lemma_2} for $i=1$.  Then, 
for $i>1$,
\begin{align*}
f(\emptyset)-f(\calG_{i})&=f(\emptyset)-f(\calG_{i-1})+[f(\calG_{i-1})-f(\calG_{i})]\\
&\geq f(\emptyset)-f(\calG_{i-1})+\\
&\;\;\;\;\frac{\gamma_f\sensorCost(s_i)}{\optBudget} (f(G_{i-1})-f(\optS))\\
&=\left(1-\frac{\gamma_f\sensorCost(s_i)}{\optBudget}\right)[f(\emptyset)-f(\calG_{i-1}])+\\
&\;\;\;\;\frac{\gamma_f\sensorCost(s_i)}{\optBudget}[f(\emptyset)-f(\optS)]\\
&\geq \left(1-\frac{\gamma_f\sensorCost(s_i)}{\optBudget}\right) \left[1-\prod_{j=1}^{i-1}\left(1-\frac{\gamma_f\sensorCost(s_j)}{\optBudget}\right)\right]\\
&\;\;\;\; [f(\emptyset)-f(\optS)]+\frac{\gamma_f\sensorCost(s_i)}{\optBudget}[f(\emptyset)-f(\optS)]\\
&= \left[1-\prod_{j=1}^{i}\left(1-\frac{\gamma_f\sensorCost(s_j)}{\optBudget}\right)\right] [f(\emptyset)-f(\optS)],
\end{align*}
\validated{using}{where we used} Lemma~\ref{lem:gen_lemma_2} for the first inequality, and the induction hypothesis for the second.
\end{proof}

\begin{proof}[Proof of part (1) of Theorem~\ref{th:approx_bound}] 
To prove Algorithm~\ref{alg:overall_nonUniformCosts}'s approximation bound $\gamma_g/2\left(1-e^{-\gamma_g}\right)$, we let $\sensorBudget'\triangleq\validated{\sum_{j=1}^{l+1}=\sensorCost(s_i)}{\sum_{j=1}^{l+1}\sensorCost(s_j)}$. Then,
\begin{align}
f(\emptyset)-f(\calG_{l+1})&\geq\left[1-\prod_{j=1}^{l+1}\left(
1-\frac{\gamma_f\sensorCost(s_j)}{\optBudget}\right)\right] [f(\emptyset)-f(\optS)]\nonumber\\
& \geq  \left(1-e^{-\gamma_f\sensorBudget'/\optBudget}\right) [f(\emptyset)-f(\optS)],\nonumber\\
& \geq  \left(1-e^{-\gamma_f}\right) [f(\emptyset)-f(\optS)],\label{ineq:aux_1}
\end{align}
where the first inequality follows from Lemma~\ref{lem:gen_lemma_3}, the second from Lemma~\ref{lem:minimum_of_products}, and ineq.~\eqref{ineq:aux_1} from that $\sensorBudget'/\optBudget\geq 1$ and, as a result, $e^{-\gamma_f\sensorBudget'/\optBudget}\leq e^{-\gamma_f}$\!, that is, $1-e^{-\gamma_f\sensorBudget'/\optBudget}\geq 1-e^{-\gamma_f}$\!.

Also,  $f(\emptyset)-f(\algSone)\geq \gamma_f [f(\calG_{l})-f(\calG_{l+1})]$ due to the Definition~\ref{def:super_ratio} of $\gamma_g$ and, as a result,
\begin{align}
&\!\!\!\gamma_f [f(\emptyset)-f(\calG_{l+1})]\nonumber\\
&\leq f(\emptyset)-f(\algSone)+\gamma_f [f(\emptyset)-f(\calG_{l})]\nonumber\\
&\leq 2\max\left\{f(\emptyset)-f(\algSone), \gamma_f [f(\emptyset)-f(\calG_{l})]\right\}. \label{ineq:aux_2}
\end{align}

Substituting ineq.~\eqref{ineq:aux_1} in ineq.~\eqref{ineq:aux_2}, and rearranging, gives
\begin{align*}
&\!\!\!
\max\left\{f(\emptyset)-f(\algSone), \gamma_f [f(\emptyset)-f(\calG_{l})]\right\} \\
&\geq \frac{\gamma_f}{2}\left(1-e^{-\gamma_f}\right) [f(\emptyset)-f(\optS)]
, \end{align*}
which implies (since $\gamma_f$ takes values in $[0,1]$)
\begin{align}
&\!\!\!
\max\left[f(\emptyset)-f(\algSone), f(\emptyset)-f(\calG_{l})\right]\nonumber \\
&\geq \frac{\gamma_f}{2}\left(1-e^{-\gamma_f}\right) [f(\emptyset)-f(\optS)].\label{ineq:aux_3}
 \end{align}

Finally, the bound $\gamma_g/2\left(1-e^{-\gamma_g}\right)$ follows from ineq.~\eqref{ineq:aux_3} as the combination of the following three observations: 
i)  $\calG_l=\algStwo$, and, as a result, $f(\emptyset)-f(\calG_l) = f(\emptyset)-f(\algStwo)$.
ii) Algorithm~\ref{alg:greedy_nonUniformCosts} returns  $\algS$ such at 
$\algS\in \arg \max_{\calS\in \{\algSone,\algStwo\}}\left[f(\emptyset)-f(\calS)\right]$ and, as a result, the previous observation, along with ineq.~\eqref{ineq:aux_3}, gives:
\begin{equation}\label{ineq:aux_4}
f(\emptyset)-f(\algS)
\geq \frac{\gamma_f}{2}\left(1-e^{-\gamma_f}\right) [f(\emptyset)-f(\optS)].
 \end{equation}
iii) Finally, Lemma~\ref{lem:LQG_closed} implies that for any  $\calS,\calS'$, $g(\calS)=f(\calS)+\mathbb{E}(\|x_1\|_{N_1})+\sum_{t=1}^{T}\trace{W_tS_t}$, where $\mathbb{E}(\|x_1\|_{N_1})+\sum_{t=1}^{T}\trace{W_tS_t}$ is independent of~$\calS$.  As a result, for any  $\calS,\calS'\subseteq \calV$, then $f(\calS)-f(\calS')=g(\calS)-g(\calS')$, which implies $\gamma_f=\gamma_g$ due to  Definition~\ref{def:super_ratio}.  In addition, Lemma~\ref{lem:LQG_closed} implies  for any $\calS\subseteq \calV$ that $g(\calS)=h[\calS, u\of{1:T}{\validated{\algS}{\calS}}]$ and $g^\star=g(\optS)$.  Thereby,  for any $\calS$ that $f(\emptyset)-f(\calS)=g(\emptyset)-g(\calS)=h[\emptyset, u\of{1:T}{\emptyset}]-h[\calS, u\of{1:T}{\validated{\algS}{\calS}}]$ and $f(\emptyset)-f(\optS)=g(\emptyset)-g(\optS)=h[\emptyset, u\of{1:T}{\emptyset}]-g^\star$\!\!.  Overall, ineq.~\eqref{ineq:aux_4} is written as
\begin{align*}
&h[\emptyset, u\of{1:T}{\emptyset}]-h[\algS, u\of{1:T}{\validated{\algS}{\algS}}]
\geq \\
&\qquad\qquad\qquad\frac{\gamma_f}{2}\left(1-e^{-\gamma_f}\right) \left\{h[\emptyset, u\of{1:T}{\emptyset}]-g^\star\right\}
, \end{align*}
which implies the bound $\gamma_g/2\left(1-e^{-\gamma_g}\right)$.

It remains to prove $1-e^{-\gamma_g\sensorCost(\algS)/\sensorBudget}$:
\begin{align}
f(\emptyset)-f(\calG_{l})&\geq\left[1-\prod_{j=1}^{l}\left(
1-\frac{\gamma_f\sensorCost(s_j)}{\optBudget}\right)\right] [f(\emptyset)-f(\calG_l)]\nonumber\\
& \geq  \left(1-e^{-\gamma_f\sensorCost(\calG_l)/\optBudget}\right) [f(\emptyset)-f(\optS)],\nonumber\\
& \geq  \left(1-e^{-\gamma_f\sensorCost(\calG_l)/\sensorBudget}\right) [f(\emptyset)-f(\optS)],\label{ineq:aux_1111}
\end{align}
where the first inequality follows from Lemma~\ref{lem:gen_lemma_3}, the second from Lemma~\ref{lem:minimum_of_products}, and ineq.~\eqref{ineq:aux_1111} from that $\sensorCost(\calG_l)/\optBudget\geq \sensorCost(\calG_l)/\sensorBudget$, since $\optBudget\leq \sensorBudget$, which implies $e^{-\gamma_f\sensorCost(\calG_l)/\optBudget}\leq e^{-\gamma_f\sensorCost(\calG_l)/\sensorBudget}$, i.e., $1-e^{-\gamma_f\sensorBudget'/\optBudget}\geq 1-e^{-\gamma_f\sensorCost(\calG_l)/\sensorBudget}$.  The proof is completed using the observations (i)-(iii) above for $\gamma_g/2\left(1-e^{-\gamma_g}\right)$.
\end{proof}

\begin{proof}[{Proof of part (2) of Theorem~\ref{th:approx_bound}}]	
The proof is parallel to that of Theorem~2 in~\cite{tzoumas2016near}.
\end{proof}
\omitted{
We compute Algorithm~\ref{alg:overall_nonUniformCosts}'s running time by adding the running times of Algorithm~\ref{alg:overall_nonUniformCosts}'s lines 1-5:

\setcounter{paragraph}{0}
\paragraph{Running time of Algorithm~\ref{alg:overall_nonUniformCosts}'s line~1}  Algorithm~\ref{alg:overall_nonUniformCosts}'s line~1 needs $O(Tn^{2.4})$ time, using the Coppersmith algorithm for both matrix inversion and multiplication~\cite{coppersmith1990matrix}.  

\paragraph{Running time of Algorithm~\ref{alg:overall_nonUniformCosts}'s line~2}  Algorithm~\ref{alg:overall_nonUniformCosts}'s line~2 running time is the running time of Algorithm~\ref{alg:greedy_nonUniformCosts}, whose running time we show next to be $O(|\calV|^2Tn^{2.4})$.  To this end, we first compute the running time of Algorithm~\ref{alg:greedy_nonUniformCosts}'s line~\ref{line:initializeSone_nonUniformCosts}, and finally the running time of Algorithm~\ref{alg:greedy_nonUniformCosts}'s lines~\ref{line:while_nonUniformCosts}--\ref{line:end_while_nonUniformCosts}. Algorithm~\ref{alg:greedy_nonUniformCosts}'s lines~\ref{line:while_nonUniformCosts}--\ref{line:end_while_nonUniformCosts} are repeated at most $|\calV|^2$ times, since before the end of each iteration of the ``while loop'' in line~\ref{line:while_nonUniformCosts} the added element in $\algStwo$ (line~\ref{line:add_a_nonUniformCosts}) is removed  from $\calV'$  (line~\ref{line:remove_s}).
We now need to find the running time of Algorithm~\ref{alg:greedy_nonUniformCosts}'s lines~\ref{line:startFor1_nonUniformCosts}--\ref{line:end_while_nonUniformCosts}; to this end, we first find the running time of Algorithm~\ref{alg:greedy_nonUniformCosts}'s lines~\ref{line:startFor1_nonUniformCosts}--\ref{line:endFor1_nonUniformCosts}, and then the running time of Algorithm~\ref{alg:greedy_nonUniformCosts}'s line~\ref{line:best_a_nonUniformCosts}.  In~more detail, the running time of Algorithm~\ref{alg:greedy_nonUniformCosts}'s lines~\ref{line:startFor1_nonUniformCosts}--\ref{line:endFor1_nonUniformCosts} is $O(|\calV|Tn^{2.4})$, since Algorithm~\ref{alg:greedy_nonUniformCosts}'s lines~\ref{line:startFor1_nonUniformCosts}--\ref{line:endFor1_nonUniformCosts} are repeated at most $|\calV|$ times and Algorithm~\ref{alg:greedy_nonUniformCosts}'s lines~\ref{line:initialize_covariance}--\ref{line:endtFor1_nonUniformCosts}, as well as, line~\ref{line:cost_nonUniformCosts} need $O(Tn^{2.4})$ time, using the Coppersmith-Winograd algorithm for both matrix inversion and multiplication~\cite{coppersmith1990matrix}.  Moreover, Algorithm~\ref{alg:greedy_nonUniformCosts}'s line~\ref{line:best_a_nonUniformCosts} needs $O[|\calV|\log(|\calV|)]$ time, since it asks for the maximum among at most~$|\calV|$ values of the $\text{gain}_{(\cdot)}$, which takes $O[|\calV|\log(|\calV|)]$ time to be found, using, e.g., the merge sort algorithm.  In sum, Algorithm~\ref{alg:greedy_nonUniformCosts}'s running time is upper bounded by $O[|\calV|^2Tn^{2.4}+ |\calV|^2\log(|\calV|)]$, which is equal to $O(|\calV|^2Tn^{2.4})$.

\paragraph{Running time of Algorithm~\ref{alg:overall_nonUniformCosts}'s lines~3-5} Algorithm~\ref{alg:overall_nonUniformCosts}'s lines~3-5 need $O(Tn^{2.4})$ time, using the Coppersmith algorithm for both matrix inversion and multiplication~\cite{coppersmith1990matrix}.

In sum, Algorithm~\ref{alg:overall_nonUniformCosts}'s running time is upper bounded by $O(|\calV|^2Tn^{2.4}+2Tn^{2.4})$ which is equal to $O(|\calV|^2Tn^{2.4})$.
\end{proof}
}

\section*{Appendix E: Proof of Theorem~\ref{th:performance_LQGconstr}}

We consider the notation in Appendix~D.
Also, let $\optS$ be a solution to Problem~\ref{prob:minCostLQG}, and $\optBudget=\sensorCost(\optS)$.
Consider the computation of the set $\algS$ in Algorithm~\ref{algLQGconstr:greedy_nonUniformCosts}, and let $\calG\triangleq\algS$ be the returned one.  Let $s_i$ be the $i$-th element added in $\calG$ during the $i$-th iteration of Algorithm~\ref{algLQGconstr:greedy_nonUniformCosts}'s ``while loop.''  Finally,  let  $\calG_i\triangleq\{s_1,s_2,\ldots, s_i\}$.


\begin{mylemma}[Adaptation of Lemma~\ref{lem:gen_lemma_2}]\label{lemLQGconstr:gen_lemma_2}
For $i=1,2,\ldots, |\calG|$,
$$f(\calG_{i-1})-f(G_{i})\geq \frac{\gamma_f\sensorCost(s_i)}{\optBudget} (f(G_{i-1})-f(\optS)).$$
\end{mylemma}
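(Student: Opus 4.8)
The plan is to reproduce, almost verbatim, the argument that proves Lemma~\ref{lem:gen_lemma_2} in Appendix~D, because the per-step greedy guarantee relies only on ingredients shared by both settings: the monotonicity of $f$ (Lemma~\ref{prop:monotonicity}), the supermodularity ratio $\gamma_f$ (Definition~\ref{def:super_ratio}), and the cost-normalized greedy selection rule, which is identical in Algorithm~\ref{alg:greedy_nonUniformCosts} and Algorithm~\ref{algLQGconstr:greedy_nonUniformCosts}. The only substantive change is the meaning of $\optS$ and $\optBudget$: here $\optS$ solves Problem~\ref{prob:minCostLQG} and $\optBudget \triangleq \sensorCost(\optS)$. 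As I note at the end, the argument never uses \emph{which} problem $\optS$ solves; it uses solely that $\optBudget$ is the total cost of $\optS$, so the adaptation is essentially notational.

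First I would invoke monotonicity (Lemma~\ref{prop:monotonicity}) to write $f(\calG_{i-1})-f(\optS) \le f(\calG_{i-1})-f(\optS\cup\calG_{i-1})$, and then enumerate $\{z_1,\ldots,z_m\}\triangleq \optS\setminus\calG_{i-1}$ in order to telescope $f(\calG_{i-1})-f(\optS\cup\calG_{i-1})$ into a sum of marginal drops $d_j \triangleq f(\calG_{i-1}\cup\{z_1,\ldots,z_{j-1}\}) - f(\calG_{i-1}\cup\{z_1,\ldots,z_j\})$, yielding $f(\calG_{i-1})-f(\optS) \le \sum_{j=1}^m d_j$.

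Next, for each $j$ I would bound $d_j/\sensorCost(z_j)$ in two steps. The supermodularity-ratio inequality (applied with base sets $\calG_{i-1} \subseteq \calG_{i-1}\cup\{z_1,\ldots,z_{j-1}\}$ and element $z_j$) upper-bounds $d_j$ by $\gamma_f^{-1}[f(\calG_{i-1})-f(\calG_{i-1}\cup\{z_j\})]$, which is where $\gamma_f > 0$ (the regime of interest, cf.\ Theorem~\ref{th:submod_ratio}) is used. Then the cost-normalized greedy rule in Algorithm~\ref{algLQGconstr:greedy_nonUniformCosts}'s line~\ref{lineLQGconstr:best_a_nonUniformCosts} upper-bounds $[f(\calG_{i-1})-f(\calG_{i-1}\cup\{z_j\})]/\sensorCost(z_j)$ by $[f(\calG_{i-1})-f(\calG_i)]/\sensorCost(s_i)$. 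This second step requires $z_j$ to be an admissible candidate at iteration $i$, i.e.\ $z_j\in\calV'$; this holds because $\calV'=\calV\setminus\calG_{i-1}$ at the start of iteration $i$ and $z_j\in\optS\setminus\calG_{i-1}$.

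Finally I would sum over $j$ and use $\sum_{j=1}^m \sensorCost(z_j) = \sensorCost(\optS\setminus\calG_{i-1}) \le \sensorCost(\optS) = \optBudget$, which follows from the nonnegativity and additivity of the sensor cost (eq.~\eqref{notation:cost_active_sensor_set}), to conclude $f(\calG_{i-1})-f(\optS) \le \optBudget\,[f(\calG_{i-1})-f(\calG_i)]/(\gamma_f\sensorCost(s_i))$; rearranging gives the claim. The only point needing care is this final cost bound, since it is the one place where the structure of $\optS$ enters; but because $\optBudget$ is defined as $\sensorCost(\optS)$ in both Appendix~D and Appendix~E, no genuine obstacle arises, and the main work is simply confirming that each inequality in the Appendix~D chain remains valid under the new definition of $\optS$.
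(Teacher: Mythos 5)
Your proposal is correct and matches the paper's own treatment: the paper proves Lemma~\ref{lemLQGconstr:gen_lemma_2} by stating it is parallel to Lemma~\ref{lem:gen_lemma_2}'s proof, i.e., the identical chain of monotonicity, telescoping over $\optS\setminus\calG_{i-1}$, the supermodularity-ratio bound, the cost-normalized greedy rule, and $\sum_j \sensorCost(z_j)\leq\optBudget$, with $\optS$ now a solution to Problem~\ref{prob:minCostLQG} and $\optBudget=\sensorCost(\optS)$. Your observation that the argument uses only $\optBudget=\sensorCost(\optS)$ and never which problem $\optS$ solves is exactly the reason the adaptation is purely notational.
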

\begin{proof}
	The proof is parallel to Lemma~\ref{lem:gen_lemma_2}'s proof.
\omitted{Due to the monotonicity of the cost function $f$ in eq.~\eqref{eq:opt_sensors} (Lemma~\ref{prop:monotonicity}),
\begin{align*}
f(\calG_{i-1})-f(\optS)&\leq f(\calG_{i-1})-f(\optS \cup \calG_{i-1})\\
&=f(\calG_{i-1})-f[(\optS\setminus \calG_{i-1}) \cup \calG_{i-1}].
\end{align*}

Let $\{z_1, z_2, \ldots, z_m\}\triangleq\optS\setminus \calG_{i-1}$, and  also let 
$$d_j\triangleq f(\calG_{i-1}\cup \{z_1, z_2, \ldots, z_{j-1}\})-f(\calG_{i-1}\cup \{z_1, z_2, \ldots, z_j\}),$$
for $j=1,2,\ldots,m$.  Then, $f(\calG_{i-1})-f(\optS)\leq \sum_{j=1}^{m}d_j$.

Notice that
\begin{align*}
\frac{d_j}{\sensorCost(z_j)}\leq \frac{f(\calG_{i-1})-f(\calG_{i-1}\cup \{z_j\})}{\gamma_f \sensorCost(z_j)}
\leq \frac{f(\calG_{i-1})-f(\calG_{i})}{\gamma_f\sensorCost(s_i)},
\end{align*}
where the first inequality holds due to the Definition~\ref{def:super_ratio} of the supermodularity ratio $\gamma_f$, and the second inequality holds due to the greedy rule (Algorithm~\ref{algLQGconstr:greedy_nonUniformCosts}'s line~\ref{lineLQGconstr:best_a_nonUniformCosts}) and the definitions of $\calG_i$ and $s_i$.  Since $\sum_{j=1}^{m}\sensorCost(z_j)\leq \optBudget$, it holds that
\belowdisplayskip =-12pt$$f(\calG_{i-1})-f(\optS)\leq \sum_{j=1}^{m}d_j\leq \optBudget\frac{f(\calG_{i-1})-f(\calG_{i})}{\gamma_f\sensorCost(s_i)}.$$}
\end{proof}

\begin{mylemma}[Adaptation of Lemma~\ref{lem:gen_lemma_3}]\label{lemLQGconstr:gen_lemma_3}
For $i=1,2,\ldots, |\calG|$,
$$f(\emptyset)-f(G_{i})\geq \left[1-\prod_{j=1}^{i}\left(1-\frac{\gamma_f\sensorCost(s_j)}{\optBudget}\right)\right] [f(\emptyset)-f(\optS)].$$
\end{mylemma}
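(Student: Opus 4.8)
The plan is to prove the bound by induction on $i$, following the structure of the proof of Lemma~\ref{lem:gen_lemma_3}, but with the recursive estimate of Lemma~\ref{lemLQGconstr:gen_lemma_2} playing the role that Lemma~\ref{lem:gen_lemma_2} played there. For the base case $i=1$, note that $\calG_0=\emptyset$, so Lemma~\ref{lemLQGconstr:gen_lemma_2} with $i=1$ reads $f(\emptyset)-f(\calG_1)\geq \frac{\gamma_f\sensorCost(s_1)}{\optBudget}\left[f(\emptyset)-f(\optS)\right]$, which is exactly the claimed inequality for $i=1$, since $1-\left(1-\frac{\gamma_f\sensorCost(s_1)}{\optBudget}\right)=\frac{\gamma_f\sensorCost(s_1)}{\optBudget}$.

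For the inductive step I would first decompose the gain additively as $f(\emptyset)-f(\calG_i)=\left[f(\emptyset)-f(\calG_{i-1})\right]+\left[f(\calG_{i-1})-f(\calG_i)\right]$, lower-bound the second bracket by Lemma~\ref{lemLQGconstr:gen_lemma_2}, then substitute $f(\calG_{i-1})-f(\optS)=\left[f(\emptyset)-f(\optS)\right]-\left[f(\emptyset)-f(\calG_{i-1})\right]$ and collect terms, obtaining
\begin{align*}
f(\emptyset)-f(\calG_i)\;\geq\;\left(1-\frac{\gamma_f\sensorCost(s_i)}{\optBudget}\right)\left[f(\emptyset)-f(\calG_{i-1})\right]+\frac{\gamma_f\sensorCost(s_i)}{\optBudget}\left[f(\emptyset)-f(\optS)\right].
\end{align*}
Invoking the induction hypothesis to lower-bound $f(\emptyset)-f(\calG_{i-1})$ and then simplifying the resulting telescoping product yields the desired estimate, exactly as in the closing display of the proof of Lemma~\ref{lem:gen_lemma_3}.

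The one point requiring care --- and the step I expect to be the main (if minor) obstacle --- is that, when substituting the induction hypothesis into the last display, the multiplier $\left(1-\frac{\gamma_f\sensorCost(s_i)}{\optBudget}\right)$ must be nonnegative, so that replacing $f(\emptyset)-f(\calG_{i-1})$ by its smaller induction-hypothesis lower bound only decreases the right-hand side and thus preserves the inequality direction. This nonnegativity, together with $f(\emptyset)-f(\optS)\geq 0$ (monotonicity, Lemma~\ref{prop:monotonicity}) and $\gamma_f\in[0,1]$ (Definition~\ref{def:super_ratio}), is handled identically to the proof of Lemma~\ref{lem:gen_lemma_3}; everything else is the routine telescoping algebra already carried out in Appendix~D.
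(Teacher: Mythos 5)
Your proposal is correct and matches the paper's proof, which simply states that the argument is parallel to that of Lemma~\ref{lem:gen_lemma_3}: the same induction, with the base case from Lemma~\ref{lemLQGconstr:gen_lemma_2} at $i=1$ and the inductive step via the decomposition $f(\emptyset)-f(\calG_i)=[f(\emptyset)-f(\calG_{i-1})]+[f(\calG_{i-1})-f(\calG_i)]$ followed by the telescoping product. Your explicit remark on the nonnegativity of the multiplier $\left(1-\frac{\gamma_f\sensorCost(s_i)}{\optBudget}\right)$ is a point the paper leaves implicit, but it does not change the route.
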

\begin{proof}
		The proof is parallel to Lemma~\ref{lem:gen_lemma_3}'s proof.
\omitted{We complete the proof inductively.  Particularly, for $i=1$, we need to prove $f(\emptyset)-f(G_{1})\geq \gamma_f\sensorCost(s_1)/\optBudget [f(\emptyset)-f(\optS)]$, which follows from Lemma~\ref{lemLQGconstr:gen_lemma_2} for $i=1$.  Then, 
we have for $i>1$:
\begin{align*}
f(\emptyset)-f(\calG_{i})&=f(\emptyset)-f(\calG_{i-1})+[f(\calG_{i-1})-f(\calG_{i})]\\
&\geq f(\emptyset)-f(\calG_{i-1})+\\
&\;\;\;\;\frac{\gamma_f\sensorCost(s_i)}{\optBudget} (f(G_{i-1})-f(\optS))\\
&=\left(1-\frac{\gamma_f\sensorCost(s_i)}{\optBudget}\right)[f(\emptyset)-f(\calG_{i-1}])+\\
&\;\;\;\;\frac{\gamma_f\sensorCost(s_i)}{\optBudget}[f(\emptyset)-f(\optS)]\\
&\geq \left(1-\frac{\gamma_f\sensorCost(s_i)}{\optBudget}\right) \left[1-\prod_{j=1}^{i-1}\left(1-\frac{\gamma_f\sensorCost(s_j)}{\optBudget}\right)\right]\\
&\;\;\;\; [f(\emptyset)-f(\optS)]+\frac{\gamma_f\sensorCost(s_i)}{\optBudget}[f(\emptyset)-f(\optS)]\\
&= \left[1-\prod_{j=1}^{i}\left(1-\frac{\gamma_f\sensorCost(s_j)}{\optBudget}\right)\right] [f(\emptyset)-f(\optS)],
\end{align*}
using Lemma~\ref{lemLQGconstr:gen_lemma_2} for the first inequality and the induction hypothesis for the second inequality.
}
\end{proof}

\begin{proof}[Proof of part (1) of Theorem~\ref{th:performance_LQGconstr}] We first {observe}  ineq.~\eqref{ineq:approx_boundLQGconstr} holds since Algorithm~\ref{algLQGconstr:overall_nonUniformCosts} returns $\algS$ once $h[\algS, u\of{1:T}{\algS}]\leq \kappa$ is satisfied.  

It remains to prove ineq.~\eqref{ineq:cost_approx_bound}. Let $l\triangleq |\calG|$; then, $\calG_l=\calG$, by the definition of $\calG_i$, and from Lemma~\ref{lem:gen_lemma_3} for $i=l-1$,
\begin{align}
f(\emptyset)-f(\calG_{l-1})&\geq\left[1-\prod_{j=1}^{l-1}\left(
1-\frac{\gamma_f\sensorCost(s_j)}{\optBudget}\right)\right] [f(\emptyset)-f(\optS)]\nonumber\\
& \geq  \left(1-e^{-\gamma_f\sensorCost(\calG_{l-1})/\optBudget}\right) [f(\emptyset)-f(\optS)],\label{ineq:aux_11111}
\end{align}
where ineq.~\eqref{ineq:aux_11111} follows from Lemma~\ref{lem:minimum_of_products}.  Moreover, Lemma~\ref{lem:LQG_closed} implies that for any  $\calS,\calS'$, it is $g(\calS)=f(\calS)+\mathbb{E}(\|x_1\|_{N_1})+\sum_{t=1}^{T}\trace{W_tS_t}$, where   $\mathbb{E}(\|x_1\|_{N_1})+\sum_{t=1}^{T}\trace{W_tS_t}$ is independent of~$\calS$, and, as a result, $f(\calS)-f(\calS')=g(\calS)-g(\calS')$, which implies $\gamma_f=\gamma_g$.  Moreover, Lemma~\ref{lem:LQG_closed} implies for any $\calS\subseteq$ that $g(\calS)=h[\calS, u\of{1:T}{\validated{\algS}{\calS}}]$, and, as a result, $f(\emptyset)-f(\calG_{l-1})=h[\emptyset, u\of{1:T}{\emptyset}]-h[\calG_{l-1}, u\of{1:T}{\calG_{l-1}}]$ and $f(\emptyset)-f(\optS)=h[\emptyset, u\of{1:T}{\emptyset}]-h[\optS, u\of{1:T}{\optS}]$.  In sum, ineq.~\eqref{ineq:aux_11111} is the same as the inequality 
\begin{align*}
& h[\emptyset, u\of{1:T}{\emptyset}]-h[\calG_{l-1}, u\of{1:T}{\calG_{l-1}}]\geq \\ &\;\;\;\left(1-e^{-\gamma_g\sensorCost(\calG_{l-1})/\optBudget}\right) \left\{h[\emptyset, u\of{1:T}{\emptyset}]-h[\optS, u\of{1:T}{\optS}]\right\},
\end{align*}
which, by letting $\temp\triangleq 1-e^{-\gamma_g\sensorCost(\calG_{l-1})/\optBudget}$ and  rearranging, gives
\begin{align}
h[\calG_{l-1}, u\of{1:T}{\calG_{l-1}}]&\leq (1-\temp)h[\emptyset,  u\of{1:T}{\emptyset}]+\temp h[\optS\!\!, u\of{1:T}{\optS}]\nonumber\\
&\leq (1-\temp)h[\emptyset,  u\of{1:T}{\emptyset}]+\temp\kappa,
\label{ineq:aux_14}
\end{align}
where the second inequality holds because $\optS$ is a solution to Problem~\ref{prob:minCostLQG} and, as result, $h[\optS\!\!, u\of{1:T}{\optS}]\leq \kappa$.  Now, we recall Algorithm~\ref{algLQGconstr:greedy_nonUniformCosts} returns $\calG=\calG_l$ when for $i=l$ it is the first time $h[\calG_i, u\of{1:T}{\calG_i}]\leq \kappa$. Therefore, $h[\calG_{l-1}, u\of{1:T}{\calG_{l-1}}]>\kappa$ and, as a result, \validated{for the}{there exists} $\epsilon>0$ such that $h[\calG_{l-1}, u\of{1:T}{\calG_{l-1}}]=(1+\epsilon)\kappa$, \validated{}{and} ineq.~\eqref{ineq:aux_14} gives
\begin{align*}
&(1+\epsilon)\kappa\leq (1-\temp)h[\emptyset,  u\of{1:T}{\emptyset}]+\temp\kappa\Rightarrow\nonumber\\
&\epsilon\kappa\leq (1-\temp)h[\emptyset,  u\of{1:T}{\emptyset}]-(1-\temp)\kappa \Rightarrow\nonumber\\
&\epsilon\kappa\leq (1-\temp)\{h[\emptyset,  u\of{1:T}{\emptyset}]-\kappa\} \Rightarrow\nonumber\\
&\epsilon\kappa\leq e^{-\gamma_g\sensorCost(\calG_{l-1})/\optBudget}\{h[\emptyset,  u\of{1:T}{\emptyset}]-\kappa\} \Rightarrow\nonumber\\
&\log\left(\frac{\epsilon\kappa}{h[\emptyset,  u\of{1:T}{\emptyset}]-\kappa}\right)\leq -\gamma_g\sensorCost(\calG_{l-1})/\optBudget \Rightarrow\nonumber\\
&\sensorCost(\calG_{l-1})\leq\frac{1}{\gamma_g} \log\left(\frac{h[\emptyset,  u\of{1:T}{\emptyset}]-\kappa}{\epsilon\kappa}\right)\optBudget \Rightarrow\nonumber\\
&\sensorCost(\calG)\leq c(s_l)+\frac{1}{\gamma_g} \log\left(\frac{h[\emptyset,  u\of{1:T}{\emptyset}]-\kappa}{\epsilon\kappa}\right)\optBudget\!\!,
\end{align*}
where the latter holds since $\calG=\calG_{l-1}\cup \{s_l\}$, due to the definitions of $\calG$, $\calG_{l-1}$, and $s_l$, and since $\sensorCost(\calG)=\sensorCost(\calG_{l-1})+\sensorCost(s_l)$.  Finally, since the definition of $\epsilon$ implies $\epsilon\kappa=h[\calG_{l-1}, u\of{1:T}{\calG_{l-1}}]-\kappa$, and the definition of $\calG$ is $\calG=\algS$, the proof of ineq.~\eqref{ineq:approx_boundLQGconstr} is complete.
\end{proof}

\begin{proof}[Proof of part (2) of Theorem~\ref{th:performance_LQGconstr}]
The proof is similar to the proof of part (2) of Theorem~\ref{th:approx_bound}.
\end{proof}

\section*{Appendix F: Proof of Theorem~\ref{th:submod_ratio}}

We complete the proof by first deriving a lower bound for the numerator of $\gamma_g$, and then, by deriving an upper bound for the denominator $\gamma_g$.  We use the following notation: $c\triangleq\mathbb{E}(x_1\tran N_1 x_1)+\sum_{t=1}^T\trace{W_tS_t}$,
and for any  $\calS$, and time $\allT$, $f_t(\calS)\triangleq\trace{\Theta_t \Sigma\att{t}{t}(\calS)}$.  Then, $g(\calS)=c+\sum_{t=1}^T f_t(\calS),$
due to eq.~\eqref{eq:lem:opt_sensors} in Lemma~\ref{lem:LQG_closed}.

\setcounter{paragraph}{0}
\paragraph{Lower bound for the numerator of  $\gamma_g$} The numerator of $\gamma_g$ has the form
$\sum_{t=1}^{T}[f_t(\calS)-f_t(\calS\cup \{v\})]$,
for some  $\calS$, and $v \in \calV$. We now lower bound each $f_t(\calS)-f_t(\calS\cup \{v\})$: from~eq.~\eqref{eq:covariance_riccati} in Lemma~\ref{lem:covariance_riccati}, observe
\begin{equation*}
\textstyle\Sigma\att{t}{t}(\calS\cup \{v\})=[\Sigma\att{t}{t-1}\inv(\calS \cup \{v\})+\sum_{i \in \calS \cup \{v\}} \bar{C}_{i,t}\tran \bar{C}_{i,t}]\inv\!.
\end{equation*}
Define $\Omega_t=\Sigma\att{t}{t-1}\inv(\calS)+\sum_{i \in \calS}^T \bar{C}_{i,t}\tran \bar{C}_{i,t}$, and $\bar{\Omega}_{t}=\Sigma\att{t}{t-1}\inv(\calS\cup \{v\})+\sum_{i \in \calS}^T \bar{C}_{i,t}\tran \bar{C}_{i,t}$; using Lemma~\ref{lem:woodbury}, 
\begin{align*}
f_t(\calS\cup \{v\})&=\trace{\Theta_t\bar{\Omega}_{t}\inv}-\\
&\hspace{0.4em}\trace{\Theta_t\bar{\Omega}_{t}\inv \bar{C}_{v,t}\tran (I+\bar{C}_{v,t} \bar{\Omega}_{t}\inv \bar{C}_{v,t}\tran)\inv \bar{C}_{v,t} \bar{\Omega}_{t}\inv}.
\end{align*}
Therefore, for any time $t \in \until{T}$,
\begin{align}
& f_t(\calS)-f_t(\calS\cup \{v\})=
\nonumber\\
&\trace{\Theta_t\Omega_{t}\inv}-\trace{\Theta_t\bar{\Omega}_{t}\inv}+\nonumber\\
&\trace{\Theta_t\bar{\Omega}_{t}\inv \bar{C}_{v,t}\tran (I+\bar{C}_{v,t} \bar{\Omega}_{t}\inv \bar{C}_{v,t}\tran)\inv \bar{C}_{v,t} \bar{\Omega}_{t}\inv}\geq\nonumber\\
& \trace{\Theta_t\bar{\Omega}_{t}\inv \bar{C}_{v,t}\tran (I+\bar{C}_{v,t} \bar{\Omega}_{t}\inv \bar{C}_{v,t}\tran)\inv \bar{C}_{v,t} \bar{\Omega}_{t}\inv},\label{ineq:super_ratio_aux_1}
\end{align}
where ineq.~\eqref{ineq:super_ratio_aux_1} holds because $\trace{\Theta_t\Omega_{t}\inv}\geq \trace{\Theta_t\bar{\Omega}_{t}\inv}$.  In~particular, $\trace{\Theta_t\Omega_{t}\inv}\geq \trace{\Theta_t\bar{\Omega}_{t}\inv}$ is implied as follows:  
Lemma~\ref{prop:one-step_monotonicity} implies $\Sigma\att{1}{1}(\calS)\succeq\Sigma\att{1}{1}(\calS\cup\{v\})$.  Then, Corollary~\ref{cor:from_t_to_t+1} implies $ \Sigma\att{t}{t-1}(\calS)\succeq \Sigma\att{t}{t-1}(\calS\cup \{v\})$, and as a result, Lemma~\ref{lem:inverse} implies $\Sigma\att{t}{t-1}(\calS)\inv\preceq\Sigma\att{t}{t-1}(\calS\cup\{v\})\inv\!$. Now, $\Sigma\att{t}{t-1}(\calS)\inv\preceq\Sigma\att{t}{t-1}(\calS\cup\{v\})\inv$ and the definition of $\Omega_t$ and~of~$\bar{\Omega}_t$ imply $\Omega_{t}\preceq \bar{\Omega}_{t}$.  Next, Lemma~\ref{lem:inverse} implies  $\Omega_{t}\inv\succeq \bar{\Omega}_{t}\inv$\!\!.  As a result, since also $\Theta_t$ is a symmetric matrix, Lem- \mbox{ma~\ref{lem:traceAB_mon} gives the desired inequality $\trace{\Theta_t\Omega_{t}\inv}\geq \trace{\Theta_t\bar{\Omega}_{t}\inv}$.} 

Continuing from the ineq.~\eqref{ineq:super_ratio_aux_1},
\begin{align}
& f_t(\calS)-f_t(\calS\cup \{v\})\geq\nonumber \\
&\trace{\bar{C}_{v,t} \bar{\Omega}_{t}\inv \Theta_t\bar{\Omega}_{t}\inv \bar{C}_{v,t}\tran (I+\bar{C}_{v,t} \bar{\Omega}_{t}\inv \bar{C}_{v,t}\tran)\inv }\geq \nonumber\\
&\lambda_\min((I+\bar{C}_{v,t} \bar{\Omega}_{t}\inv \bar{C}_{v,t}\tran)\inv)\trace{\bar{C}_{v,t} \bar{\Omega}_{t}\inv \Theta_t\bar{\Omega}_{t}\inv \bar{C}_{v,t}\tran },\label{ineq:super_ratio_aux_2}
\end{align}
where ineq.~\eqref{ineq:super_ratio_aux_2} holds due to Lemma~\ref{lem:trace_low_bound_lambda_min}.  From ineq.~\eqref{ineq:super_ratio_aux_2},
\hspace*{-3mm}\begin{align}
& f_t(\calS)-f_t(\calS\cup \{v\})\geq\nonumber \\
&= \lambda_\max\inv(I+\bar{C}_{v,t} \bar{\Omega}_{t}\inv \bar{C}_{v,t}\tran)\trace{\bar{C}_{v,t} \bar{\Omega}_{t}\inv \Theta_t\bar{\Omega}_{t}\inv \bar{C}_{v,t}\tran }\nonumber\\
&\geq \lambda_\max\inv(I+\bar{C}_{v,t} \Sigma\att{t}{t}(\emptyset) \bar{C}_{v,t}\tran)\trace{\bar{C}_{v,t} \bar{\Omega}_{t}\inv \Theta_t\bar{\Omega}_{t}\inv \bar{C}_{v,t}\tran }\nonumber\\
&= \lambda_\max\inv(I+\bar{C}_{v,t} \Sigma\att{t}{t}(\emptyset) \bar{C}_{v,t}\tran)\trace{\Theta_t \bar{\Omega}_{t}\inv \bar{C}_{v,t}\tran \bar{C}_{v,t}\bar{\Omega}_{t}\inv},\label{ineq:super_ratio_aux_3}
\end{align}
where we used $\bar{\Omega}_{t}\inv\preceq\Sigma\att{t}{t}(\emptyset)$, which holds since: $\bar{\Omega}_t$ implies  $\bar{\Omega}_{t}\succeq \Sigma\inv\att{t}{t-1}(\calS\cup \{v\})$, and as a result, from Lemma~\ref{lem:inverse} $\bar{\Omega}_{t}\inv \preceq \Sigma\att{t}{t-1}(\calS\cup \{v\})$.  In addition, Corollary~\ref{cor:from_t_to_t+1} and $\Sigma\att{1}{1}(\calS\cup \{v\})\preceq \Sigma\att{1}{1}(\emptyset)$, which holds due to Lemma~\ref{prop:one-step_monotonicity}, imply $\Sigma\att{t}{t-1}(\calS\cup \{v\})\preceq \Sigma\att{t}{t-1}(\emptyset)$.  Finally, from eq.~\eqref{eq:covariance_riccati} in Lemma~\ref{lem:covariance_riccati}, $\Sigma\att{t}{t-1}(\emptyset)=\Sigma\att{t}{t}(\emptyset)$. Overall,  $\bar{\Omega}_{t}\inv\preceq\Sigma\att{t}{t}(\emptyset)$.

Consider $t' \in \until{T}$ such that $\bar{\Omega}_{t'}\inv \bar{C}_{v,t'}\tran \bar{C}_{v,t'}\bar{\Omega}_{t'}\inv  \preceq  \bar{\Omega}_{t}\inv \bar{C}_{v,t}\tran \bar{C}_{v,t} \bar{\Omega}_{t}\inv$, for any $t=1,\ldots,T$. Also, let $\Phi\triangleq\bar{\Omega}_{t'}\inv \bar{C}_{v,t'}\tran \bar{C}_{v,t'} \bar{\Omega}_{t'}\inv$, and $l\triangleq\min_{t\in\until{T},v \in \calV}\lambda_\max\inv(I+\bar{C}_{v,t} \Sigma\att{t}{t}(\emptyset) \bar{C}_{v,t}\tran)$. 
Summing ineq.~\eqref{ineq:super_ratio_aux_3} across all $t \in \until{T}$, and using Lemmata~\ref{lem:trace_low_bound_lambda_min} and~\ref{lem:traceAB_mon},
\begin{align*}
\textstyle g(\calS)-g(\calS\cup\{v\})&\textstyle\geq  l\sum_{t=1}^T \trace{\Theta_t\bar{\Omega}_{t}\inv \bar{C}_{v,t}\tran \bar{C}_{v,t} \bar{\Omega}_{t}\inv }\\
\textstyle& \textstyle\geq l \lambda_\min\left(\sum_{t=1}^T \Theta_t \right)\trace{\Phi}>0,
\end{align*}
which is non-zero because $\sum_{t=1}^T \Theta_t\succ 0$ and $\Phi$ is a non-zero positive semi-definite matrix. 

Finally, we lower bound $\trace{\Phi}$, using Lemma~\ref{lem:trace_low_bound_lambda_min}:
\begin{align}
\textstyle\trace{\Phi}&=\trace{\bar{\Omega}_{t'}\inv \bar{C}_{v,t'}\tran \bar{C}_{v,t'} \bar{\Omega}_{t'}\inv }\nonumber\\
\textstyle&\geq \lambda_\min(\bar{\Omega}_{t'}^{-2}) \trace{\bar{C}_{v,t'}\tran\bar{C}_{v,t'}}\nonumber\\
\textstyle&\geq \lambda_\min^2(\Sigma\att{t'}{t'}(\calV)) \trace{\bar{C}_{v,t'}\tran\bar{C}_{v,t'}},\label{ineq:super_ratio_aux_10}
\end{align}
where ineq.~\eqref{ineq:super_ratio_aux_10} holds because $\bar{\Omega}_{t'}^{-1}\succeq \Sigma\att{t'}{t'}(\calV)$.  Particularly,  $\bar{\Omega}_{t'}^{-1}\succeq \Sigma\att{t'}{t'}(\calS\cup \{v\})$ is derived by applying Lemma~\ref{lem:inverse} to $\bar{\Omega}_{t'}\preceq \bar{\Omega}_{t'}+\bar{C}_{v,t}\tran\bar{C}_{v,t}\tran=\Sigma\inv\att{t'}{t'}(\calS\cup\{v\})$, where the equality holds by the definition of $\bar{\Omega}_{t'}$.  In~addition, due to Lemma~\ref{prop:one-step_monotonicity}, $\Sigma\att{1}{1}(\calS\cup \{v\})\succeq \Sigma\att{1}{1}(\calV)$, and as a result, from Corollary~\ref{cor:from_t_to_t}, $\Sigma\att{t'}{t'}(\calS\cup \{v\})\succeq \Sigma\att{t'}{t'}(\calV)$.  Overall, $\bar{\Omega}_{t'}^{-1}\succeq \Sigma\att{t'}{t'}(\calV)$ holds.

\paragraph{Upper bound for the denominator of $\gamma_g$} The proof follows similar ideas as above, and is omitted (for a complete proof, see~\cite[Proof of Theorem~\ref{th:submod_ratio}]{tzoumas2018lqg}).
\omitted{The denominator of the submodularity ratio~$\gamma_g$ is of the form
\begin{equation*}
\sum_{t=1}^{T}[f_t(\calS')-f_t(\calS'\cup \{v\})],
\end{equation*}
for some sensor set $\calS'\subseteq \calV$, and sensor $v \in \calV$;  to upper bound it,  from~eq.~\eqref{eq:covariance_riccati} in Lemma~\ref{lem:covariance_riccati} of Appendix~A, observe
\begin{equation*}
\Sigma\att{t}{t}(\calS'\cup \{v\})=[\Sigma\att{t}{t-1}\inv(\calS' \cup \{v\})+\sum_{i \in \calS' \cup \{v\}} \bar{C}_{i,t}\tran \bar{C}_{i,t}]\inv\!,
\end{equation*}
and let $H_t=\Sigma\att{t}{t-1}\inv(\calS')+\sum_{i \in \calS'}^T\bar{C}_{i,t}\tran \bar{C}_{i,t}$, and $\bar{H}_{t}=\Sigma\att{t}{t-1}\inv(\calS'\cup \{v\})+\sum_{i \in \calS'}^T \bar{C}_{i,t}\tran \bar{C}_{i,t}$; using the Woodbury identity in Lemma~\ref{lem:woodbury},
\begin{align*}
f_t(\calS'\cup \{v\})&=\trace{\Theta_t\bar{H}_{t}\inv}-\\
&\hspace{0.4em}\trace{\Theta_t\bar{H}_{t}\inv \bar{C}_{v,t}\tran (I+\bar{C}_{v,t} \bar{H}_{t}\inv \bar{C}_{v,t}\tran)\inv \bar{C}_{v,t} \bar{H}_{t}\inv}.
\end{align*}
Therefore, 
\begin{align}
&\sum_{t=1}^T [f_t(\calS')-f_t(\calS'\cup \{v\})]=
\nonumber\\
&\sum_{t=1}^T [\trace{\Theta_t H_{t}\inv}-\trace{\Theta_t\bar{H}_{t}\inv}+\nonumber
\end{align}

\begin{align}
&\trace{\Theta_t\bar{H}_{t}\inv \bar{C}_{v,t}\tran (I+\bar{C}_{v,t} \bar{H}_{t}\inv \bar{C}_{v,t}\tran)\inv \bar{C}_{v,t} \bar{H}_{t}\inv}]\leq\nonumber\\
&\sum_{t=1}^T [\trace{\Theta_t H_{t}\inv}+ \nonumber\\
&\trace{\Theta_t\bar{H}_{t}\inv \bar{C}_{v,t}\tran (I+\bar{C}_{v,t} \bar{H}_{t}\inv \bar{C}_{v,t}\tran)\inv \bar{C}_{v,t} \bar{H}_{t}\inv}],\label{ineq:super_ratio_aux_4}
\end{align}
where ineq.~\eqref{ineq:super_ratio_aux_4} holds since $\trace{\Theta_t\bar{H}_{t}\inv}$ is non-negative.  In eq.~\eqref{ineq:super_ratio_aux_4}, the second term in the sum is upper bounded as follows, using Lemma~\ref{lem:trace_low_bound_lambda_min}:
\begin{align}
&\trace{\Theta_t\bar{H}_{t}\inv \bar{C}_{v,t}\tran (I+\bar{C}_{v,t} \bar{H}_{t}\inv \bar{C}_{v,t}\tran)\inv \bar{C}_{v,t} \bar{H}_{t}\inv}= \nonumber\\
&\trace{\bar{C}_{v,t} \bar{H}_{t}\inv\Theta_t\bar{H}_{t}\inv \bar{C}_{v,t}\tran (I+\bar{C}_{v,t} \bar{H}_{t}\inv \bar{C}_{v,t}\tran)\inv}\leq \nonumber\\
&\trace{\bar{C}_{v,t} \bar{H}_{t}\inv\Theta_t\bar{H}_{t}\inv \bar{C}_{v,t}\tran} \lambda_\max[(I+\bar{C}_{v,t} \bar{H}_{t}\inv \bar{C}_{v,t}\tran)\inv]= \nonumber\\
&\trace{\bar{C}_{v,t} \bar{H}_{t}\inv\Theta_t\bar{H}_{t}\inv \bar{C}_{v,t}\tran} \lambda_\min\inv(I+\bar{C}_{v,t} \bar{H}_{t}\inv \bar{C}_{v,t}\tran)\leq \nonumber\\
&\trace{\bar{C}_{v,t} \bar{H}_{t}\inv\Theta_t\bar{H}_{t}\inv \bar{C}_{v,t}\tran} \lambda_\min\inv(I+\bar{C}_{v,t} \Sigma\att{t}{t}(\calV) \bar{C}_{v,t}\tran),\label{ineq:super_ratio_aux_5}
\end{align}
since $\lambda_\min(I+\bar{C}_{v,t} \bar{H}_{t}\inv \bar{C}_{v,t}\tran)\geq \lambda_\min(I+\bar{C}_{v,t} \Sigma\att{t}{t}(\calV) \bar{C}_{v,t}\tran)$, because  $\bar{H}_{t}\inv\succeq \Sigma\att{t}{t}(\calV)$.  Particularly, the inequality $\bar{H}_{t}\inv\succeq \Sigma\att{t}{t}(\calV)$ is derived as follows: first, it is $\bar{H}_{t}\preceq \bar{H}_{t}+\bar{C}_{v,t}\tran \bar{C}_{v,t}=\Sigma\att{t}{t}(\calS'\cup \{v\})\inv\!,$ where the equality holds by the definition of $\bar{H}_t$, and now Lemma~\ref{lem:inverse} implies $\bar{H}_{t}\inv\succeq \Sigma\att{t}{t}(\calS'\cup \{v\})$.  In~addition, $\Sigma\att{t}{t}(\calS'\cup\{v\})\succeq \Sigma\att{t}{t}(\calV)$ is implied from Corollary~\ref{cor:from_t_to_t}, since Lemma~\ref{prop:one-step_monotonicity} implies $\Sigma\att{1}{1}(\calS'\cup \{v\})\succeq \Sigma\att{1}{1}(\calV)$.  Overall, the desired inequality $\bar{H}_{t}\inv\succeq \Sigma\att{t}{t}(\calV)$ holds.

Let $l'=\max_{t\in\until{T}, v\in \calV}\lambda_\min\inv(I+\bar{C}_{v,t} \Sigma\att{t}{t}(\calV) \bar{C}_{v,t}\tran)$. 
From ineqs.~\eqref{ineq:super_ratio_aux_4} and~\eqref{ineq:super_ratio_aux_5},
\beal\label{ineq:super_ratio_aux_6}
&\sum_{t=1}^T [f_t(\calS')-f_t(\calS'\cup \{v\})]\leq
\\
&\sum_{t=1}^T [\trace{\Theta_t H_{t}\inv }+ l'\trace{\Theta_t\bar{H}_{t}\inv \bar{C}_{v,t}\tran\bar{C}_{v,t} \bar{H}_{t}\inv}].
\eeal
Consider times $t' \in \until{T}$ and $t'' \in \until{T}$ such that for any time $t \in \{1,2,\ldots,T\}$, it is  $H_{t'}\inv  \succeq H_{t}\inv $ and $\bar{H}_{t''}\inv \bar{C}_{v,t''}\tran\bar{C}_{v,t''} \bar{H}_{t''}\inv \succeq \bar{H}_{t}\inv \bar{C}_{v,t}\tran\bar{C}_{v,t} \bar{H}_{t}\inv$, and let $\Xi=H_{t'}\inv $ and $\Phi'=\bar{H}_{t'}\inv \bar{C}_{v,t'}\tran\bar{C}_{v,t'} \bar{H}_{t'}\inv$\!\!.  From ineq.~\eqref{ineq:super_ratio_aux_6}, and Lemma~\ref{lem:traceAB_mon},
\begin{align}
&\sum_{t=1}^T [f_t(\calS')-f_t(\calS'\cup \{v\})]\leq
\nonumber\\
&\sum_{t=1}^T [\trace{\Theta_t  \Xi}+ l'\trace{ \Theta_t  \Phi'}]\leq\nonumber\\
&\trace{\Xi\sum_{t=1}^T\Theta_t }+ l'\trace{\Phi'\sum_{t=1}^T \Theta_t }\leq\nonumber \\
&(\trace{\Xi}+l'\trace{\Phi'})\lambda_\max(\sum_{t=1}^T \Theta_t).\label{ineq:super_ratio_aux_7}
\end{align}

Finally, we upper bound $\trace{\Xi}+l'\trace{\Phi'}$ in ineq.~\eqref{ineq:super_ratio_aux_7}, using Lemma~\ref{lem:trace_low_bound_lambda_min}:
\begin{align}
&\trace{\Xi}+l'\trace{\Phi'}\leq \nonumber\\
&\trace{ H_{t'}\inv}+\\
&l'\lambda_\max^2(\bar{H}_{t''}\inv)\trace{\bar{C}_{v,t''}\tran\bar{C}_{v,t''}}\leq\nonumber\\
&\trace{\Sigma\att{t'}{t'}(\emptyset)}+l'\lambda_\max^2(\Sigma\att{t''}{t''}(\emptyset))\trace{\bar{C}_{v,t''}\tran\bar{C}_{v,t''}},\label{ineq:super_ratio_aux_8}
\end{align}
where ineq.~\eqref{ineq:super_ratio_aux_8} holds because $H_{t'}\inv\preceq \Sigma\att{t'}{t'}(\emptyset)$, and similarly, $\bar{H}_{t''}\inv\preceq \Sigma\att{t''}{t''}(\emptyset)$. Particularly, the inequality $H_{t'}\inv\preceq \Sigma\att{t'}{t'}(\emptyset)$ is implied as follows: first, by the definition of $H_{t'}$, it is $H_{t'}\inv=\Sigma\att{t'}{t'}(\calS')$; and finally, Corollary~\ref{cor:from_t_to_t} and the fact that $\Sigma\att{1}{1}(\calS')\preceq \Sigma\att{1}{1}(\emptyset)$, which holds due to Lemma~\ref{prop:one-step_monotonicity}, imply $\Sigma\att{t'}{t'}(\calS')\preceq \Sigma\att{t'}{t'}(\emptyset)$.  In addition, the inequality $\bar{H}_{t''}\inv\preceq \Sigma\att{t''}{t''}(\emptyset)$ is implied as follows: first, by the definition of $\bar{H}_{t''}$, it is $\bar{H}_{t''}\succeq \Sigma\inv\att{t''}{t''-1}(\calS'\cup\{v\})$, and as a result, Lemma~\ref{lem:inverse} implies $\bar{H}_{t''}\inv\preceq \Sigma\att{t''}{t''-1}(\calS'\cup\{v\})$.  Moreover, Corollary~\ref{cor:from_t_to_t+1} and the fact that $\Sigma\att{1}{1}(\calS\cup \{v\})\preceq \Sigma\att{1}{1}(\emptyset)$, which holds due to Lemma~\ref{prop:one-step_monotonicity}, imply $\Sigma\att{t''}{t''-1}(\calS'\cup\{v\})\preceq \Sigma\att{t''}{t''-1}(\emptyset)$.  Finally, from eq.~\eqref{eq:covariance_riccati} in Lemma~\ref{lem:covariance_riccati} it is~$\Sigma\att{t''}{t''-1}(\emptyset)=\Sigma\att{t''}{t''}(\emptyset)$.  Overall, the desired inequality $\bar{H}_{t''}\inv\preceq \Sigma\att{t''}{t''}(\emptyset)$ holds.
}

\section*{Appendix G: Proof of Theorem~\ref{th:freq}}
\label{sec:contribution}


\begin{mylemma}[System-level condition for near-optimal co-design]\label{prop:iff_for_all_zero_control}
Let $N_1$ be defined as in eq.~\eqref{eq:control_riccati}.  The control policy $u^{\circ}_{1:T}\triangleq(0,0,\ldots, 0)$ is suboptimal for the \LQG problem in eq.~\eqref{pr:perfect_state} for all non-zero initial conditions~$x_1$
if and only if 
\beq\label{ineq:iff_for_all_zero_control}
\textstyle \sum_{t=1}^{T} A_1\tran \cdots A_t\tran Q_t A_t\cdots A_1\succ N_1.
\eeq
\end{mylemma}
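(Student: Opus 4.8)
The plan is to evaluate separately the cost incurred by the all-zeroes policy and the optimal value of the perfect-state-information problem in eq.~\eqref{pr:perfect_state}, and then to compare the two as quadratic forms in $x_1$. First I would propagate the zero-control trajectory: under $u^{\circ}_{1:T}$ together with the constraint $w_t = 0$, the dynamics in eq.~\eqref{eq:system} reduce to $x_{t+1} = A_t x_t = A_t A_{t-1}\cdots A_1 x_1$, so the objective in eq.~\eqref{pr:perfect_state} evaluated at $u^{\circ}_{1:T}$ equals $\sum_{t=1}^{T} x_{t+1}\tran Q_t x_{t+1} = x_1\tran \Psi x_1$, where $\Psi \triangleq \sum_{t=1}^{T} A_1\tran\cdots A_t\tran Q_t A_t \cdots A_1$ is exactly the left-hand side of eq.~\eqref{ineq:iff_for_all_zero_control}.

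Next I would identify the optimal value of eq.~\eqref{pr:perfect_state}. Specializing Lemma~\ref{lem:LQG_closed} to $\Sigma\att{t}{t} = W_t = 0$ (equivalently, running the backward dynamic-programming recursion $V_t(x_t) = \min_{u_t}[x_{t+1}\tran S_t x_{t+1} + u_t\tran R_t u_t]$ with $x_{t+1}=A_tx_t+B_tu_t$, and collapsing the resulting expression with the Woodbury identity of Lemma~\ref{lem:woodbury} to obtain $V_t(x_t) = x_t\tran N_t x_t$), the optimal value is exactly $x_1\tran N_1 x_1$, attained by the policy $u_t = K_t x_t$ from eq.~\eqref{eq:control_riccati}.

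It then remains to translate suboptimality into the stated matrix inequality. Since each $R_t \succ 0$, the objective in eq.~\eqref{pr:perfect_state} is a strictly convex quadratic in $u_{1:T}$ and hence has a unique minimizer; the feasible policy $u^{\circ}_{1:T}$ is optimal for a fixed $x_1$ precisely when its cost equals the minimum, and suboptimal precisely when its cost strictly exceeds it. Thus, for a fixed non-zero $x_1$, $u^{\circ}_{1:T}$ is suboptimal iff $x_1\tran \Psi x_1 > x_1\tran N_1 x_1$. Requiring this for all non-zero $x_1$ is equivalent to $x_1\tran(\Psi - N_1)x_1 > 0$ for every $x_1 \neq 0$, i.e. to $\Psi \succ N_1$, which is eq.~\eqref{ineq:iff_for_all_zero_control}.

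I expect the main obstacle to be the second step: pinning down that the optimal value is exactly $x_1\tran N_1 x_1$ with full rigor, since this is where the Riccati recursion in eq.~\eqref{eq:control_riccati} enters and where the Woodbury manipulation $S_t - S_t B_t(B_t\tran S_t B_t + R_t)\inv B_t\tran S_t = (S_t\inv + B_t R_t\inv B_t\tran)\inv$ must be carried out cleanly. A secondary care point is the logical bookkeeping around the word \emph{suboptimal}: one must invoke strict convexity (via $R_t \succ 0$) so that suboptimality corresponds exactly to a \emph{strict} inequality of quadratic forms, and one must match the quantifier over all non-zero $x_1$ with positive-\emph{definiteness} rather than mere positive-semidefiniteness. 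Note that the invertibility of the $A_t$ assumed in Theorem~\ref{th:freq} is not needed for this lemma; it is used only afterwards to relate $\Psi \succ N_1$ to $\sum_{t=1}^{T}\Theta_t \succ 0$.
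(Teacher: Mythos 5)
Your proposal is correct and follows essentially the same route as the paper's proof: both evaluate the zero-control cost along $x_{t+1}=A_t\cdots A_1x_1$ to get $x_1\tran\left(\sum_{t=1}^{T}A_1\tran\cdots A_t\tran Q_t A_t\cdots A_1\right)x_1$, obtain the optimal value $x_1\tran N_1 x_1$ by specializing Lemma~\ref{lem:LQG_closed} (eq.~\eqref{eq:lem:opt_sensors}) to $\Sigma\att{t}{t}=W_t=0$, and conclude by noting that strict inequality of the two quadratic forms for every non-zero $x_1$ is exactly eq.~\eqref{ineq:iff_for_all_zero_control}. Your added care points are sound but inessential (suboptimality of a feasible point already means strict excess over the minimum, without invoking strict convexity), and your observation that invertibility of the $A_t$ is not needed for this lemma matches the paper, which uses it only later in Lemma~\ref{lem:sum_theta}.
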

\begin{proof}[Proof of Lemma~\ref{prop:iff_for_all_zero_control}]
For any $x_1$, eq.~\eqref{eq:lem:opt_sensors} in Lemma~\ref{lem:LQG_closed} implies for eq.~\eqref{pr:perfect_state}:
\begin{equation}\label{eq:it_always_aux_1}
\textstyle\min_{u_{1:T}}\sum_{t=1}^{T}\left.[\|x\at{t+1}\|^2_{Q_t} +\|u_{t}(x_t)\|^2_{R_t}]\right|_{\Sigma\att{t}{t}=W_t=0}=x_1\tran N_1 x_1,
\end{equation}
since $\mathbb{E}(\|x_1\|^2_{N_1})=x_1\tran N_1x_1$, because $x_1$ is known ($\Sigma\att{1}{1}=0$), and $\Sigma\att{t}{t}$ and $W_t$ are zero.
In addition, for $u_{1:T}=(0,0,\ldots,0)$, the objective function in eq.~\eqref{pr:perfect_state} is 
\beal\label{eq:it_always_aux_2}
\textstyle&\sum_{t=1}^{T}\left.[\|x\at{t+1}\|^2_{Q_t} +\|u_{t}(x_t)\|^2_{R_t}]\right|_{\Sigma\att{t}{t}=W_t=0}\\
\textstyle&= \sum_{t=1}^{T} x\at{t+1}\tran Q_t x\at{t+1}\\ 
\textstyle&=x_1\tran\sum_{t=1}^{T} A_1\tran A_2\tran \cdots A_t\tran Q_t A_tA_{t-1}\cdots A_1 x_1,
\eeal
since $x_{t+1}=A_tA_{t-1}\cdots A_1x_1$ when all $u_1, \ldots, u_T$ are zero.  

From eqs.~\eqref{eq:it_always_aux_1} and~\eqref{eq:it_always_aux_2}, we have that $x_1\tran N_1 x_1 < x_1\tran\sum_{t=1}^{T} A_1\tran A_2\tran \cdots A_t\tran Q_t A_tA_{t-1}\cdots A_1 x_1$ holds for any non-zero $x_1$ if and only if
$N_1 \prec \sum_{t=1}^{T} A_1\tran \cdots A_t\tran Q_t A_tA_{t-1}\cdots A_1.$
\end{proof}

\begin{mylemma}\label{lem:theta_formula}
$\Theta_t=A_t\tran S_t A_t+Q_{t-1}-S_{t-1}$, for $t=1,\ldots,T$.
\end{mylemma}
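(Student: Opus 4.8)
The plan is to reduce the claimed identity to the defining Riccati recursion~\eqref{eq:control_riccati} by means of the Woodbury identity (Lemma~\ref{lem:woodbury}). First I would put $\Theta_t$ into a more convenient closed form. Since $M_t=B_t\tran S_t B_t+R_t$ is symmetric (as $S_t$ and $R_t$ are), so is $M_t\inv$; substituting $K_t=-M_t\inv B_t\tran S_t A_t$ into $\Theta_t=K_t\tran M_t K_t$ and cancelling $M_t\inv M_t M_t\inv=M_t\inv$ yields
\[
\Theta_t=A_t\tran S_t B_t M_t\inv B_t\tran S_t A_t.
\]

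The key step is then to evaluate the difference $A_t\tran S_t A_t-\Theta_t=A_t\tran\bigl[S_t-S_t B_t M_t\inv B_t\tran S_t\bigr]A_t$. I would apply Lemma~\ref{lem:woodbury} with $A=S_t\inv$, $U=B_t$, $C=R_t\inv$, and $V=B_t\tran$, so that $A\inv=S_t$ and $C\inv+VA\inv U=R_t+B_t\tran S_t B_t=M_t$; the identity gives $(S_t\inv+B_t R_t\inv B_t\tran)\inv=S_t-S_t B_t M_t\inv B_t\tran S_t$. Comparing the bracketed factor with the definition $N_t=A_t\tran(S_t\inv+B_t R_t\inv B_t\tran)\inv A_t$ in~\eqref{eq:control_riccati}, I obtain $A_t\tran S_t A_t-\Theta_t=N_t$, that is, $\Theta_t=A_t\tran S_t A_t-N_t$.

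Finally, I would eliminate $N_t$ using the recursion itself: reading $S_t=Q_t+N_{t+1}$ at index $t-1$ gives $S_{t-1}=Q_{t-1}+N_t$, hence $N_t=S_{t-1}-Q_{t-1}$. Substituting this into $\Theta_t=A_t\tran S_t A_t-N_t$ produces exactly $\Theta_t=A_t\tran S_t A_t+Q_{t-1}-S_{t-1}$, as claimed.

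I expect the only delicate point to be the invertibility bookkeeping in the Woodbury step: taking $A=S_t\inv$ presupposes $S_t$ invertible, which is precisely the assumption already implicit in~\eqref{eq:control_riccati}, where $S_t\inv$ appears in the definition of $N_t$. If one wishes to avoid this assumption altogether, the equivalent control-Riccati form $N_t=A_t\tran\bigl[S_t-S_t B_t M_t\inv B_t\tran S_t\bigr]A_t$ can be used directly, bypassing Woodbury; the remaining manipulations are then routine matrix algebra together with the single index shift $S_{t-1}=Q_{t-1}+N_t$.
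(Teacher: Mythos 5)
Your proposal is correct and follows essentially the same route as the paper: the paper likewise applies the Woodbury identity (Lemma~\ref{lem:woodbury}) to obtain $N_t = A_t\tran (S_t\inv + B_t R_t\inv B_t\tran)\inv A_t = A_t\tran S_t A_t - \Theta_t$, and then uses the index shift $S_{t-1} = Q_{t-1} + N_t$ from eq.~\eqref{eq:control_riccati} to conclude. Your write-up merely spells out the intermediate steps (the closed form $\Theta_t = A_t\tran S_t B_t M_t\inv B_t\tran S_t A_t$ and the explicit Woodbury instantiation) that the paper leaves implicit, and your invertibility remark matches the assumption already built into the recursion.
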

\begin{proof}[Proof of Lemma~\ref{lem:theta_formula}]
Using the Woobury identity in Lemma~\ref{lem:woodbury}, and the notation in eq.~\eqref{eq:control_riccati},
$N_t = A_t\tran (S_t\inv+B_tR_t\inv B_t\tran)\inv A_t 
=A_t\tran S_t A_t-\Theta_t.$
The latter, gives $\Theta_t=A_t\tran S_t A_t-N_t$.  In addition, from eq.~\eqref{eq:control_riccati},  $-N_t=Q_{t-1}-S_{t-1}$, since $S_t=Q_t+N_{t+1}$.
\end{proof}

\begin{mylemma}\label{lem:observability_condition}
$\sum_{t=1}^{T} A_1\tran A_2\tran \cdots A_t\tran Q_t A_tA_{t-1}\cdots A_1\succ N_1$ if and only if 
$\sum_{t=1}^{T}A\tran_{1}A\tran_2\cdots A\tran_{t-1} \Theta_t A_{t-1}A_{t-2}\cdots A_1 \succ 0.$

\end{mylemma}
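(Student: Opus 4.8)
The plan is to prove the stronger statement that the right-hand side equals the left-hand side \emph{minus} $N_1$; the claimed equivalence then follows at once, since for any symmetric $M$ we have $M\succ 0 \iff M+N_1\succ N_1$. To set up the bookkeeping I would introduce the noiseless transition matrices $\Phi_t\triangleq A_tA_{t-1}\cdots A_1$ for $t\geq 1$, with the convention $\Phi_0\triangleq\eye$, so that $A_t\Phi_{t-1}=\Phi_t$. In this notation the left-hand side reads $\sum_{t=1}^T\Phi_t\tran Q_t\Phi_t$ and the right-hand side reads $\sum_{t=1}^T\Phi_{t-1}\tran\Theta_t\Phi_{t-1}$.

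The key algebraic input is the identity $\Theta_t=A_t\tran S_t A_t-N_t$, which is precisely the Woodbury-based relation derived inside the proof of Lemma~\ref{lem:theta_formula}. I would deliberately use this $N_t$-form rather than the equivalent $Q_{t-1}-S_{t-1}$ form, since the latter implicitly invokes the undefined boundary quantities $S_0,Q_0$ at $t=1$, whereas the $N_t$-form is valid verbatim for every $t=1,\ldots,T$. Substituting it into each summand of the right-hand side and using $A_t\Phi_{t-1}=\Phi_t$ to fold the quadratic term gives $\Phi_{t-1}\tran\Theta_t\Phi_{t-1}=\Phi_t\tran S_t\Phi_t-\Phi_{t-1}\tran N_t\Phi_{t-1}$. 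Expanding $S_t=Q_t+N_{t+1}$ (the first line of the Riccati recursion in eq.~\eqref{eq:control_riccati}) then splits off the $Q_t$ contribution, leaving
\begin{equation*}
\sum_{t=1}^T\Phi_{t-1}\tran\Theta_t\Phi_{t-1}=\sum_{t=1}^T\Phi_t\tran Q_t\Phi_t+\sum_{t=1}^T\left(\Phi_t\tran N_{t+1}\Phi_t-\Phi_{t-1}\tran N_t\Phi_{t-1}\right).
\end{equation*}

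The second sum telescopes: writing $b_t\triangleq\Phi_{t-1}\tran N_t\Phi_{t-1}$, the summand is exactly $b_{t+1}-b_t$, so the sum collapses to $b_{T+1}-b_1=\Phi_T\tran N_{T+1}\Phi_T-N_1=-N_1$, where I invoke the boundary condition $N_{T+1}=0$ and the convention $\Phi_0=\eye$. Combining the two sums shows the right-hand side equals $\sum_{t=1}^T\Phi_t\tran Q_t\Phi_t-N_1$, i.e. the left-hand side minus $N_1$, which finishes the argument. I do not expect a substantive obstacle here; the only points demanding care are the two endpoints of the telescope — correctly reading off $\Phi_0=\eye$ and $N_{T+1}=0$ — and the choice to express $\Theta_t$ through $N_t$, so that no spurious index-$0$ terms ever enter the computation.
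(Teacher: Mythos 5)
Your proof is correct and is essentially the paper's argument: the paper likewise establishes the stronger identity $\sum_{t=1}^{T}A_1\tran\cdots A_{t-1}\tran\Theta_t A_{t-1}\cdots A_1=\sum_{t=1}^{T}A_1\tran\cdots A_t\tran Q_t A_t\cdots A_1-N_1$ from the relation $\Theta_t=A_t\tran S_t A_t-N_t$ of Lemma~\ref{lem:theta_formula}, differing only in that it unrolls the recursion by repeated pre- and post-multiplication with the $A_i$'s instead of your telescoping sum. Your bookkeeping via $\Phi_t$ and the deliberate use of the $N_t$-form (which sidesteps the undefined quantities $S_0,Q_0$ at $t=1$ implicit in the $Q_{t-1}-S_{t-1}$ form) is simply a tidier write-up of the same computation.
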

\begin{proof}[Proof]
For $i= t-1, \ldots, 1$, we pre- and post-multiply the identity in Lemma~\ref{lem:theta_formula} with $A_i\tran$ and $A_i$, respectively:
\belowdisplayskip=-12pt\beal\nonumber
&\Theta_t=A_t\tran S_t A_t+Q_{t-1}-S_{t-1}\Rightarrow\\
& A_{t-1}\tran\Theta_t A_{t-1}=A_{t-1}\tran A_t\tran S_t A_t A_{t-1}+A_{t-1}\tran Q_{t-1}A_{t-1}-\\
& \qquad A_{t-1}\tran S_{t-1}A_{t-1}\Rightarrow\\
& A_{t-1}\tran\Theta_t A_{t-1}=A_{t-1}\tran A_t\tran S_t A_t A_{t-1}+A_{t-1}\tran Q_{t-1}A_{t-1}-\\
& \qquad \Theta_{t-1}+Q_{t-2}-S_{t-2}\Rightarrow\\
& \Theta_{t-1}+ A_{t-1}\tran\Theta_t A_{t-1}=A_{t-1}\tran A_t\tran S_t A_t A_{t-1}+\\
&\qquad A_{t-1}\tran Q_{t-1}A_{t-1}+Q_{t-2}-S_{t-2}\Rightarrow\\
&\ldots\Rightarrow\\
&\qquad \ldots+ A_2\tran Q_2 A_2+ Q_{1}-S_{1}\Rightarrow\\
& \Theta_1+ A_1\tran\Theta_2 A_1+\ldots+A_1\tran \cdots A_{t-1}\tran\Theta_t A_{t-1}\cdots A_1=\\
& A_1\tran \cdots A_t\tran S_t A_t \cdots A_1+A_1\tran \cdots A_{t-1}\tran Q_{t-1}A_{t-1}\cdots A_1+\\
&\qquad\ldots+ A_1\tran Q_1 A_1-N_1\Rightarrow\\
&\sum_{t=1}^{T}A\tran_{1}\cdots A\tran_{t-1} \Theta_t A_{t-1}\cdots A_1 =\\
&\qquad\sum_{t=1}^{T} A_1\tran \cdots A_t\tran Q_t A_t\cdots A_1-N_1.\qedhere
\eeal
\end{proof}

\begin{mylemma}\label{lem:sum_theta}
Consider for any $\allT$ that $A_t$ is invertible.
$\sum_{t=1}^{T}A\tran_{1}A\tran_2\cdots A\tran_{t-1} \Theta_t A_{t-1}A_{t-2}\cdots A_1\succ 0$ if and only if  
$\sum_{t=1}^{T} \Theta_t \succ 0.$
\end{mylemma}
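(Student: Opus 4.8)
The plan is to recast both matrix inequalities as statements about the common kernel of the matrices $\Theta_t$, and then to transfer triviality of one common kernel to the other by exploiting the invertibility of every $A_t$. The starting point is that each $\Theta_t=K_t\tran M_t K_t$ is positive semidefinite, since $M_t=B_t\tran S_t B_t+R_t\succ 0$ (as $R_t\succ 0$). Hence both $\sum_{t=1}^T \Theta_t$ and $\sum_{t=1}^T P_t\tran\Theta_t P_t$, with $P_t\triangleq A_{t-1}A_{t-2}\cdots A_1$ (and $P_1\triangleq I$), are sums of positive semidefinite matrices. For any such sum, $x\tran(\sum_t\Theta_t)x=\sum_t x\tran\Theta_t x=0$ forces $x\tran\Theta_t x=0$, hence $\Theta_t x=0$, for every $t$; consequently $\sum_{t=1}^T\Theta_t\succ 0$ if and only if $\bigcap_{t=1}^T\ker\Theta_t=\{0\}$, and likewise $\sum_{t=1}^T P_t\tran\Theta_t P_t\succ 0$ if and only if $\bigcap_{t=1}^T\ker(\Theta_t P_t)=\{0\}$.

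First I would use that each $P_t$ is invertible (being a product of the invertible matrices $A_1,\ldots,A_{t-1}$) to rewrite $\ker(\Theta_t P_t)=P_t\inv\ker\Theta_t$. The lemma is then equivalent to the purely linear-algebraic claim that $\bigcap_{t=1}^T P_t\inv\ker\Theta_t=\{0\}$ if and only if $\bigcap_{t=1}^T\ker\Theta_t=\{0\}$. I would attack this by induction on $T$, peeling off the innermost factor $A_1$: writing $\sum_{t=1}^T P_t\tran\Theta_t P_t=\Theta_1+A_1\tran\big(\sum_{t=2}^T(A_{t-1}\cdots A_2)\tran\Theta_t(A_{t-1}\cdots A_2)\big)A_1$ exhibits the tail as a sum of exactly the same form for the time-shifted system, to which the inductive hypothesis would apply, leaving only the combination of $\Theta_1$ with the $A_1$-congruence of the tail to be controlled.

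The key step — and the one I expect to be the main obstacle — is to show that triviality of the common kernel is preserved when the individual summands are transformed by the \emph{different} invertible congruences $P_t$. A term-by-term appeal to ``congruence by an invertible matrix preserves definiteness'' is not available, because that principle requires a \emph{single} common congruence, whereas here each $\Theta_t$ is conjugated by its own $P_t$; indeed, for generic positive semidefinite matrices the two common kernels need not vanish together. To close the gap I would try to establish that the common kernel $\mathcal N\triangleq\bigcap_{t}\ker\Theta_t$ behaves compatibly with the state propagation, using the recursive structure $\Theta_t=A_t\tran S_t B_t M_t\inv B_t\tran S_t A_t$ together with $S_{t-1}=Q_{t-1}+N_t$ rather than the mere positive semidefiniteness of the $\Theta_t$; the aim is to prove that $x\in\mathcal N$ forces the propagated vectors $A_t\cdots A_1 x$ to remain in the relevant kernels, so that $\bigcap_t P_t\inv\ker\Theta_t$ and $\bigcap_t\ker\Theta_t$ vanish simultaneously. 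This invariance is delicate, and it is precisely where the standing assumption that every $A_t$ is invertible is indispensable.
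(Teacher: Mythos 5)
Your reduction of both positivity statements to kernel intersections is correct: since each $\Theta_t\succeq 0$, one has $\sum_{t=1}^T\Theta_t\succ 0$ iff $\bigcap_{t}\ker\Theta_t=\{0\}$, and $\sum_{t=1}^T P_t\tran\Theta_t P_t\succ 0$ iff $\bigcap_t P_t\inv\ker\Theta_t=\{0\}$, and your warning that these two conditions are \emph{not} equivalent for generic positive semidefinite summands under \emph{different} invertible congruences is exactly right. But your plan stalls at precisely the step you flag as delicate---the propagation-invariance of the common kernel---and that step cannot be supplied, because the ``if'' direction of the lemma fails even with the full Riccati structure in place. Take $n=2$, $T=2$, $A_1=\left(\begin{smallmatrix}0&1\\1&0\end{smallmatrix}\right)$, $A_2=\eye$, $B_1=B_2=e_2$, $Q_1=0$, $Q_2=\eye$, $R_1=R_2=1$. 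The recursion in eq.~\eqref{eq:control_riccati} gives $S_2=\eye$, $\Theta_2=\tfrac12 e_2e_2\tran$, $N_2=\mathrm{diag}(1,\tfrac12)$, $S_1=\mathrm{diag}(1,\tfrac12)$, $K_1=-\tfrac13 e_1\tran$, $\Theta_1=\tfrac16 e_1e_1\tran$. Then $\Theta_1+\Theta_2=\mathrm{diag}(\tfrac16,\tfrac12)\succ 0$, yet $\Theta_1+A_1\tran\Theta_2A_1=\tfrac23\,e_1e_1\tran$ is singular: $\ker\Theta_1=\mathrm{span}(e_2)$, $\ker\Theta_2=\mathrm{span}(e_1)$, and $A_1e_2=e_1$, so $e_2\in\ker\Theta_1\cap A_1\inv\ker\Theta_2$---the obstruction you exhibited for generic matrices survives the Riccati structure. (Consistently with Lemma~\ref{lem:observability_condition}, here $\sum_t A_1\tran\cdots A_t\tran Q_tA_t\cdots A_1=\eye\not\succ N_1=\mathrm{diag}(\tfrac13,1)$, and indeed from $x_1=e_2$ the zero input is optimal in eq.~\eqref{pr:perfect_state}, since the input only ever actuates the coordinate that the zero-input trajectory leaves unpenalized.) So the right resolution of the difficulty you isolated is a counterexample or added hypotheses, not a cleverer invariance argument.

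It is worth comparing this with the paper's own proof, which takes a different route from yours: a direct quadratic-form/trace argument that treats the $\Theta_t$ as unstructured positive semidefinite matrices. Its pivotal step posits, for $\phi_t\triangleq U_t\inv z$ with $U_t=A_{t-1}\cdots A_1$, a time $t'$ with $\phi_{t'}\phi_{t'}\tran\preceq\phi_t\phi_t\tran$ for all $t$ (and symmetrically a $\xi_{t'}\xi_{t'}\tran\preceq\xi_t\xi_t\tran$ in the converse direction). But two rank-one positive semidefinite matrices are Loewner-comparable only when the underlying vectors are collinear, so such a $t'$ generally does not exist; the published argument thus silently assumes away exactly the phenomenon your kernel analysis identifies as the crux. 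In short: your proposal is not a complete proof---the key step is announced but never established---yet your structural diagnosis is more accurate than the paper's proof, and the gap you refused to paper over is, on the evidence of the instance above, genuinely unfillable as the lemma is stated.
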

\begin{proof}[Proof of Lemma~\ref{lem:sum_theta}] Let $U_t= A_{t-1}A_{t-2}\cdots A_1$.

We first prove that for any non-zero vector $z$, if it is $\sum_{t=1}^{T}A\tran_{1}A\tran_2\cdots A\tran_{t-1} \Theta_t A_{t-1}A_{t-2}\cdots A_1\succ 0$, then $\sum_{t=1}^{T} z\tran\Theta_t z> 0$.  Particularly, since $U_t$ is invertible, ---because for any $t\in \{1,2,\ldots, T\}$, $A_t$ is,---
\beal\label{eq:lem_sum_theta_aux_1}
\sum_{t=1}^{T} z\tran\Theta_t z&=\sum_{t=1}^{T} z\tran U_t^{-\top}U_t\tran\Theta_t U_t U_t\inv z\\
&=\sum_{t=1}^T\trace{\phi_t \phi_t\tran U_t\tran\Theta_t U_t},
\eeal
where we let $\phi_t\triangleq U_t\inv z$.  Consider a time $t'$ such that for any time $t\in \{1,2\ldots, T\}$, $\phi_{t'} \phi_{t'}\tran\preceq \phi_t \phi_t\tran$.  From eq.~\eqref{eq:lem_sum_theta_aux_1}, using Lemmata~\ref{lem:trace_low_bound_lambda_min} and~\ref{lem:traceAB_mon},
\begin{align*}
\textstyle\sum_{t=1}^{T} z\tran\Theta_t z&\textstyle\geq\sum_{t=1}^T\trace{\phi_{t'} \phi_{t'}\tran U_t\tran\Theta_t U_t}\\
\textstyle&\textstyle=\|\phi_{t'}\|_2^2 \lambda_\min(\sum_{t=1}^TU_t\tran\Theta_t U_t)>0.
\end{align*}

We finally prove that for any non-zero vector $z$, if  $\sum_{t=1}^{T} \Theta_t \succ 0$, then $\sum_{t=1}^{T}z A\tran_{1}\cdots A\tran_{t-1} \Theta_t A_{t-1}\cdots A_1z\succ 0$:
\begin{align}\label{eq:lem_sum_theta_aux_2}
\textstyle\sum_{t=1}^{T} z\tran U_t\tran\Theta_t U_t z&\textstyle=\sum_{t=1}^T\trace{ \xi_{t}\tran \Theta_t \xi_{t}},
\end{align}
where we let $\xi_t\triangleq U_tz$.  Consider time $t'$ such that for any time $t\in \{1,\ldots, T\}$, $\xi_{t'} \xi_{t'}\tran\preceq \xi_t \xi_t\tran$.  From eq.~\eqref{eq:lem_sum_theta_aux_1},
\belowdisplayskip =-12pt\begin{align*}
\textstyle\sum_{t=1}^T\trace{ \xi_{t}\tran \Theta_t \xi_{t}}&\textstyle\geq\trace{\xi_{t'} \xi_{t'}\tran \sum_{t=1}^T\Theta_t }\\
\textstyle&\textstyle=\|\xi_{t'}\|_2^2 \lambda_\min(\sum_{t=1}^T\Theta_t )>0.
\end{align*}
\end{proof}

\begin{proof}[Proof of Theorem~\ref{th:freq}]
Theorem~\ref{th:freq} follows from the sequential application of Lemmata~\ref{prop:iff_for_all_zero_control},~\ref{lem:observability_condition}, and~\ref{lem:sum_theta}.
\end{proof}

\bibliographystyle{IEEEtran}
\bibliography{references}

\begin{thebibliography}{10}
\providecommand{\url}[1]{#1}
\csname url@samestyle\endcsname
\providecommand{\newblock}{\relax}
\providecommand{\bibinfo}[2]{#2}
\providecommand{\BIBentrySTDinterwordspacing}{\spaceskip=0pt\relax}
\providecommand{\BIBentryALTinterwordstretchfactor}{4}
\providecommand{\BIBentryALTinterwordspacing}{\spaceskip=\fontdimen2\font plus
\BIBentryALTinterwordstretchfactor\fontdimen3\font minus
  \fontdimen4\font\relax}
\providecommand{\BIBforeignlanguage}[2]{{%
\expandafter\ifx\csname l@#1\endcsname\relax
\typeout{** WARNING: IEEEtran.bst: No hyphenation pattern has been}%
\typeout{** loaded for the language `#1'. Using the pattern for}%
\typeout{** the default language instead.}%
\else
\language=\csname l@#1\endcsname
\fi
#2}}
\providecommand{\BIBdecl}{\relax}
\BIBdecl

\bibitem{bertsekas2005dynamic}
D.~P. Bertsekas, \emph{Dynamic programming and optimal control,
  Vol.~{I}}.\hskip 1em plus 0.5em minus 0.4em\relax Athena Scientific, 2005.

\bibitem{abdelzaher2018toward}
T.~Abdelzaher, N.~Ayanian, T.~Basar, S.~Diggavi, J.~Diesner, D.~Ganesan,
  R.~Govindan, S.~Jha, T.~Lepoint, B.~Marlin \emph{et~al.}, ``Toward an
  internet of battlefield things: A resilience perspective,'' \emph{Computer},
  vol.~51, no.~11, pp. 24--36, 2018.

\bibitem{michael2011cooperative}
N.~Michael, J.~Fink, and V.~Kumar, ``Cooperative manipulation and
  transportation with aerial robots,'' \emph{Autonomous Robots}, vol.~30,
  no.~1, pp. 73--86, 2011.

\bibitem{prorok2017impact}
A.~Prorok, M.~A. Hsieh, and V.~Kumar, ``The impact of diversity on optimal
  control policies for heterogeneous robot swarms,'' \emph{IEEE Transactions on
  Robotics}, vol.~33, no.~2, pp. 346--358.

\bibitem{gupta2006stochastic}
V.~Gupta, T.~H. Chung, B.~Hassibi, and R.~M. Murray, ``On a stochastic sensor
  selection algorithm with applications in sensor scheduling and sensor
  coverage,'' \emph{Automatica}, vol.~42, no.~2, pp. 251--260, 2006.

\bibitem{carlone2017attention}
L.~Carlone and S.~Karaman, ``Attention and anticipation in fast visual-inertial
  navigation,'' in \emph{IEEE International Conference on Robotics and
  Automation}, 2017, pp. 3886--3893.

\bibitem{iwaki2018lqg}
T.~Iwaki and K.~H. Johansson, ``Lqg control and scheduling co-design for
  wireless sensor and actuator networks,'' in \emph{IEEE International Workshop
  on Signal Processing Advances in Wireless Communications}, 2018, pp. 1--5.

\bibitem{Nair07ieee-rateConstrainedControl}
G.~Nair, F.~Fagnani, S.~Zampieri, and R.~Evans, ``Feedback control under data
  rate constraints: An overview,'' \emph{Proceedings of the IEEE}, vol.~95,
  no.~1, pp. 108--137, 2007.

\bibitem{Baillieul07ieee-networkedControl}
J.~Baillieul and P.~Antsaklis, ``Control and communication challenges in
  networked real-time systems,'' \emph{Proceedings of the IEEE}, vol.~95,
  no.~1, pp. 9--28, 2007.

\bibitem{Elia01tac-limitedInfoControl}
N.~Elia and S.~Mitter, ``Stabilization of linear systems with limited
  information,'' \emph{IEEE Trans. on Automatic Control}, vol.~46, no.~9, pp.
  1384--1400, 2001.

\bibitem{Nair04sicon-rateConstrainedControl}
G.~Nair and R.~Evans, ``Stabilizability of stochastic linear systems with
  finite feedback data rates,'' \emph{SIAM Journal on Control and
  Optimization}, vol.~43, no.~2, pp. 413--436, 2004.

\bibitem{Tatikonda04tac-limitedCommControl}
S.~Tatikonda and S.~Mitter, ``Control under communication constraints,''
  \emph{IEEE Trans. on Automatic Control}, vol.~49, no.~7, pp. 1056--1068,
  2004.

\bibitem{Borkar97-limitedCommControl}
V.~Borkar and S.~Mitter, ``{LQG} control with communication constraints,''
  \emph{Comm., Comp., Control, and Signal Processing}, pp. 365--373, 1997.

\bibitem{LeNy14tac-limitedCommControl}
J.~L. Ny and G.~Pappas, ``Differentially private filtering,'' \emph{IEEE Trans.
  on Automatic Control}, vol.~59, no.~2, pp. 341--354, 2014.

\bibitem{lin2017sparse}
F.~Lin and V.~Adetola, ``Sparse output feedback synthesis via proximal
  alternating linearization method,'' \emph{arXiv preprint:1706.08191}, 2017.

\bibitem{lin2011augmented}
F.~Lin, M.~Fardad, and M.~R. Jovanovic, ``Augmented lagrangian approach to
  design of structured optimal state feedback gains,'' \emph{IEEE Transactions
  on Automatic Control}, vol.~56, no.~12, pp. 2923--2929, 2011.

\bibitem{lin2013design}
F.~Lin, M.~Fardad, and M.~R. Jovanovi{\'c}, ``Design of optimal sparse feedback
  gains via the alternating direction method of multipliers,'' \emph{IEEE
  Transactions on Automatic Control}, vol.~58, no.~9, pp. 2426--2431, 2013.

\bibitem{zare2018proximal}
A.~Zare, H.~Mohammadi, N.~K. Dhingra, M.~R. Jovanovi{\'c}, and T.~T. Georgiou,
  ``Proximal algorithms for large-scale statistical modeling and optimal
  sensor/actuator selection,'' \emph{arXiv preprint:1807.01739}, 2018.

\bibitem{liu2017decentralized}
T.~Liu, S.~Azarm, and N.~Chopra, ``On decentralized optimization for a class of
  multisubsystem codesign problems,'' \emph{Journal of Mechanical Design}, vol.
  139, no.~12, p. 121404, 2017.

\bibitem{tanaka2015sdp}
T.~Tanaka and H.~Sandberg, ``{SDP}-based joint sensor and controller design for
  information-regularized optimal {LQG} control,'' in \emph{IEEE Conference on
  Decision and Control}, 2015, pp. 4486--4491.

\bibitem{tanaka2018lqg}
T.~Tanaka, P.~M. Esfahani, and S.~K. Mitter, ``{LQG} control with minimum
  directed information: {S}emidefinite programming approach,'' \emph{IEEE
  Trans.~on Automatic Control}, vol.~63, no.~1, pp. 37--52, 2018.

\bibitem{joshi2009sensor}
S.~Joshi and S.~Boyd, ``Sensor selection via convex optimization,'' \emph{IEEE
  Transactions on Signal Processing}, vol.~57, no.~2, pp. 451--462, 2009.

\bibitem{leny2011kalman}
J.~L. Ny, E.~Feron, and M.~A. Dahleh, ``Scheduling continuous-time kalman
  filters,'' \emph{IEEE Trans. on Aut. Control}, vol.~56, no.~6, pp.
  1381--1394, 2011.

\bibitem{jawaid2015submodularity}
S.~T. Jawaid and S.~L. Smith, ``Submodularity and greedy algorithms in sensor
  scheduling for linear dynamical systems,'' \emph{Automatica}, vol.~61, pp.
  282--288, 2015.

\bibitem{zhao2016scheduling}
Y.~Zhao, F.~Pasqualetti, and J.~Cort{\'e}s, ``Scheduling of control nodes for
  improved network controllability,'' in \emph{IEEE Conf. on Decision and
  Control}, 2016, pp. 1859--1864.

\bibitem{chamon2017mean}
L.~F. Chamon, G.~J. Pappas, and A.~Ribeiro, ``The mean square error in kalman
  filtering sensor selection is approximately supermodular,'' in \emph{IEEE
  Conf. on Decision and Control}, 2017, pp. 343--350.

\bibitem{tzoumas2015sensor}
V.~Tzoumas, A.~Jadbabaie, and G.~J. Pappas, ``Sensor placement for optimal
  {K}alman filtering,'' in \emph{Amer.~Contr.~Conf.}, 2016, pp. 191--196.

\bibitem{clark2012leader}
A.~Clark, L.~Bushnell, and R.~Poovendran, ``On leader selection for performance
  and controllability in multi-agent systems,'' in \emph{IEEE 51st IEEE
  Conference on Decision and Control}, 2012, pp. 86--93.

\bibitem{clark2017input}
A.~Clark, B.~Alomair, L.~Bushnell, and R.~Poovendran, ``Input selection for
  performance and controllability of structured linear descriptor systems,''
  \emph{SIAM Journal on Control and Optimization}, vol.~55, no.~1, pp.
  457--485, 2017.

\bibitem{liu2017minimal}
Z.~Liu, Y.~Long, A.~Clark, P.~Lee, L.~Bushnell, D.~Kirschen, and R.~Poovendran,
  ``Minimal input selection for robust control,'' in \emph{IEEE 56th Annual
  Conference on Decision and Control}, 2017, pp. 2659--2966.

\bibitem{pequito2015framework}
S.~Pequito, S.~Kar, and A.~P. Aguiar, ``A framework for structural input/output
  and control configuration selection in large-scale systems,'' \emph{IEEE
  Transactions on Automatic Control}, vol.~61, no.~2, pp. 303--318, 2015.

\bibitem{summers2016submodularity}
T.~H. Summers, F.~L. Cortesi, and J.~Lygeros, ``On submodularity and
  controllability in complex dynamical networks,'' \emph{IEEE Transactions on
  Control of Network Systems}, vol.~3, no.~1, pp. 91--101, 2016.

\bibitem{tzoumas2015minimal}
V.~Tzoumas, M.~A. Rahimian, G.~J. Pappas, and A.~Jadbabaie, ``Minimal actuator
  placement with bounds on control effort,'' \emph{IEEE Transactions on Control
  of Network Systems}, vol.~3, no.~1, pp. 67--78, 2015.

\bibitem{summers2017performance}
T.~Summers and M.~Kamgarpour, ``Performance guarantees for greedy maximization
  of non-submodular set functions in systems and control,'' \emph{arXiv
  preprint:1712.04122}, 2017.

\bibitem{summers2017performance2}
T.~Summers and J.~Ruths, ``Performance bounds for optimal feedback control in
  networks,'' in \emph{American Control Conference}, 2018, pp. 203--209.

\bibitem{nozari2017time}
E.~Nozari, F.~Pasqualetti, and J.~Cort{\'e}s, ``Time-invariant versus
  time-varying actuator scheduling in complex networks,'' in \emph{American
  Control Conference}, 2017, pp. 4995--5000.

\bibitem{taha2017time}
A.~F. Taha, N.~Gatsis, T.~Summers, and S.~Nugroho, ``Time-varying sensor and
  actuator selection for uncertain cyber-physical systems,'' \emph{IEEE
  Transactions on Control of Network Systems}, vol.~6, no.~2, pp. 750--762,
  2019.

\bibitem{clark2016submodularity}
A.~Clark, B.~Alomair, L.~Bushnell, and R.~Poovendran, \emph{Submodularity in
  dynamics and control of networked systems}.\hskip 1em plus 0.5em minus
  0.4em\relax Springer, 2017.

\bibitem{chakrabortty2011optimal}
A.~{Chakrabortty} and C.~F. {Martin}, ``Optimal measurement allocation
  algorithms for parametric model identification of power systems,'' \emph{IEEE
  Transactions on Control Systems Technology}, vol.~22, no.~5, pp. 1801--1812,
  2014.

\bibitem{moreno2015actuator}
C.~P. Moreno, H.~Pfifer, and G.~J. Balas, ``Actuator and sensor selection for
  robust control of aeroservoelastic systems,'' in \emph{American Control
  Conference}, 2015, pp. 1899--1904.

\bibitem{lim1992method}
K.~Lim, ``Method for optimal actuator and sensor placement for large flexible
  structures,'' \emph{Journal of Guidance, Control, and Dynamics}, vol.~15,
  no.~1, pp. 49--57, 1992.

\bibitem{zelazo2010graph}
D.~Zelazo, ``Graph-theoretic methods for the analysis and synthesis of
  networked dynamic systems,'' Ph.D. dissertation, University of Washington.

\bibitem{das2011submodular}
A.~Das and D.~Kempe, ``Submodular meets spectral: {G}reedy algorithms for
  subset selection, sparse approximation and dictionary selection,'' in
  \emph{Intl. Conf. on Machine Learning}, 2011, pp. 1057--1064.

\bibitem{wang2016approximation}
Z.~Wang, B.~Moran, X.~Wang, and Q.~Pan, ``Approximation for maximizing monotone
  non-decreasing set functions with a greedy method,'' \emph{Journal of
  Combinatorial Optimization}, vol.~31, no.~1, pp. 29--43, 2016.

\bibitem{sviridenko2013optimal}
M.~Sviridenko, J.~Vondr{\'a}k, and J.~Ward, ``Optimal approximation for
  submodular and supermodular optimization with bounded curvature,''
  \emph{arXiv preprint:1311.4728}, 2013.

\bibitem{sviridenko2017optimal}
------, ``Optimal approximation for submodular and supermodular optimization
  with bounded curvature,'' \emph{Mathematics of Operations Research}, vol.~42,
  no.~4, pp. 1197--1218, 2017.

\bibitem{nemhauser78analysis}
G.~Nemhauser, L.~Wolsey, and M.~Fisher, ``An analysis of approximations for
  maximizing submodular set functions -- {I},'' \emph{Mathematical
  Programming}, vol.~14, no.~1, pp. 265--294, 1978.

\bibitem{wolsey1982analysis}
L.~A. Wolsey, ``An analysis of the greedy algorithm for the submodular set
  covering problem,'' \emph{Combinatorica}, vol.~2, no.~4, pp. 385--393, 1982.

\bibitem{khuller1999budgeted}
S.~Khuller, A.~Moss, and J.~S. Naor, ``The budgeted maximum coverage problem,''
  \emph{Info.~Processing Letters}, vol.~70, no.~1, pp. 39--45, 1999.

\bibitem{sviridenko2004}
M.~Sviridenko, ``A note on maximizing a submodular set function subject to a
  knapsack constraint,'' \emph{Operations Research Letters}, vol.~32, no.~1,
  pp. 41--43, 2004.

\bibitem{krause2005note}
A.~Krause and C.~Guestrin, ``A note on the budgeted maximization of submodular
  functions,'' 2005.

\bibitem{tzoumas2018sensing}
V.~{Tzoumas}, L.~{Carlone}, G.~J. {Pappas}, and A.~{Jadbabaie},
  ``Sensing-constrained {LQG} control,'' in \emph{American Control Conference},
  2018.

\bibitem{lehmann2006combinatorial}
B.~Lehmann, D.~Lehmann, and N.~Nisan, ``Combinatorial auctions with decreasing
  marginal utilities,'' \emph{Games and Economic Behavior}, vol.~55, no.~2, pp.
  270--296, 2006.

\bibitem{chamon2016near}
L.~F. Chamon and A.~Ribeiro, ``Near-optimality of greedy set selection in the
  sampling of graph signals,'' in \emph{IEEE Global Conference on Signal and
  Information Processing}, 2016, pp. 1265--1269.

\bibitem{hashemi2019submodular}
A.~Hashemi, M.~Ghasemi, H.~Vikalo, and U.~Topcu, ``Submodular observation
  selection and information gathering for quadratic models,'' in \emph{Intl.
  Conf. on Machine Learning}, 2019, pp. 2653--2662.

\bibitem{guo2019actuator}
B.~Guo, O.~Karaca, T.~Summers, and M.~Kamgarpour, ``Actuator placement for
  optimizing network performance under controllability constraints,''
  \emph{arXiv preprint:1903.08120}, 2019.

\bibitem{feige1998}
U.~Feige, ``A threshold of $ln(n)$ for approximating set cover,'' \emph{Journal
  of the ACM}, vol.~45, no.~4, pp. 634--652, 1998.

\bibitem{sviridenko2004note}
M.~Sviridenko, ``A note on maximizing a submodular set function subject to a
  knapsack constraint,'' \emph{Operations Research Letters}, vol.~32, no.~1,
  pp. 41--43, 2004.

\bibitem{nguyen2013budgeted}
H.~Nguyen and R.~Zheng, ``On budgeted influence maximization in social
  networks,'' \emph{IEEE Journal on Selected Areas in Communications}, vol.~31,
  no.~6, pp. 1084--1094, 2013.

\bibitem{iyer2013submodular}
R.~K. Iyer and J.~A. Bilmes, ``Submodular optimization with submodular cover
  and submodular knapsack constraints,'' in \emph{Advances in Neural
  Information Processing Systems}, 2013, pp. 2436--2444.

\bibitem{zhang2016submodular}
H.~Zhang and Y.~Vorobeychik, ``Submodular optimization with routing
  constraints,'' in \emph{AAAI Conference on Artificial Intelligence}, 2016.

\bibitem{qian2017subset}
C.~Qian, J.-C. Shi, Y.~Yu, and K.~Tang, ``On subset selection with general cost
  constraints,'' in \emph{International Joint Conference on Artificial
  Intelligence}, 2017, pp. 2613--2619.

\bibitem{leskovec2007cost}
J.~Leskovec, A.~Krause, C.~Guestrin, C.~Faloutsos, C.~Faloutsos, J.~VanBriesen,
  and N.~Glance, ``Cost-effective outbreak detection in networks,'' in
  \emph{ACM SIGKDD international conference on Knowledge discovery and data
  mining}, 2007, pp. 420--429.

\bibitem{Tzoumas18acc-scLGQG}
V.~{Tzoumas}, L.~{Carlone}, G.~J. {Pappas}, and A.~{Jadbabaie},
  ``{Sensing-constrained LQG Control},'' \emph{arXiv preprint: 1709.08826},
  2017.

\bibitem{tzoumas2018lqg}
------, ``{LQG} control and sensing co-design,'' \emph{arXiv
  preprint:1802.08376}, 2018.

\bibitem{ye2018complexity}
L.~Ye, S.~Roy, and S.~Sundaram, ``On the complexity and approximability of
  optimal sensor selection for {K}alman filtering,'' in \emph{American Control
  Conference}, 2018, pp. 5049--5054.

\bibitem{krause2010submodular}
A.~Krause and V.~Cevher, ``Submodular dictionary selection for sparse
  representation,'' in \emph{International Conference on Machine Learning},
  2010.

\bibitem{siam2018ecc}
M.~{Siami} and A.~{Jadbabaie}, ``Deterministic polynomial-time actuator
  scheduling with guaranteed performance,'' in \emph{European Control
  Conference}, 2018, pp. 113--118.

\bibitem{tzoumas2017resilient}
V.~Tzoumas, K.~Gatsis, A.~Jadbabaie, and G.~J. Pappas, ``Resilient monotone
  submodular maximization,'' in \emph{IEEE Conf. on Decision and Control},
  2017.

\bibitem{bernstein2005matrix}
D.~S. Bernstein, \emph{Matrix mathematics}.\hskip 1em plus 0.5em minus
  0.4em\relax Princeton University Press, 2005.

\bibitem{tzoumas2016near}
V.~Tzoumas, A.~Jadbabaie, and G.~J. Pappas, ``Near-optimal sensor scheduling
  for batch state estimation: Complexity, algorithms, and limits,'' in
  \emph{IEEE Conference on Decision and Control}, 2016, pp. 2695--2702.

\end{thebibliography}

{
\begin{biography}[{\includegraphics[width=1in,height=1.25in,keepaspectratio]{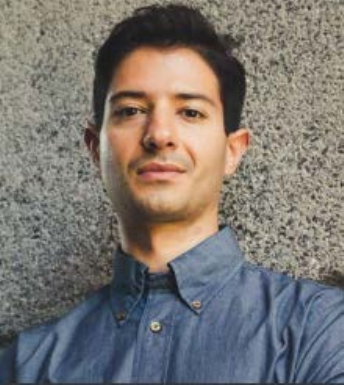}}]{Vasileios Tzoumas}  (S'12-M'18) starts as an Assistant Professor at the University of Michigan, Ann Arbor, on January 2021.  Currently, he is a research scientist at the Department of Aeronautics and Astronautics, and the Laboratory for Information and Decision Systems (LIDS) at MIT, where he previously was a post-doctoral associate. He received his Ph.D.~at the Department of Electrical and Systems Engineering, University of Pennsylvania (2018). He was a visiting Ph.D.~student at the MIT Institute for Data, Systems, and Society in 2017.  He holds a diploma in Electrical and Computer Engineering from the National Technical University of Athens (2012); a Master of Science in Electrical Engineering from the University of Pennsylvania (2016); and a Master of Arts in Statistics from the Wharton School of Business at the University of Pennsylvania (2016). His research interests include control theory, perception, learning, and combinatorial and non-convex optimization, with applications to robotics, resource-constrained cyber-physical systems,  and unmanned aerospace systems.  He aims for a provably trustworthy autonomy.  
	His work includes seminal results on provably optimal resilient combinatorial optimization, with applications to  multi-robot information gathering for resiliency against robotic failures and adversarial removals.
	Dr.~Tzoumas was a Best Student Paper Award finalist at the 56th IEEE Conference in Decision and Control (2017), and a Best Paper Award finalist in Robotic Vision at the 2020 IEEE International Conference on Robotics and Automation (ICRA). 
\end{biography}

\begin{biography}[{\includegraphics[width=1in,height=1.25in,clip,keepaspectratio]{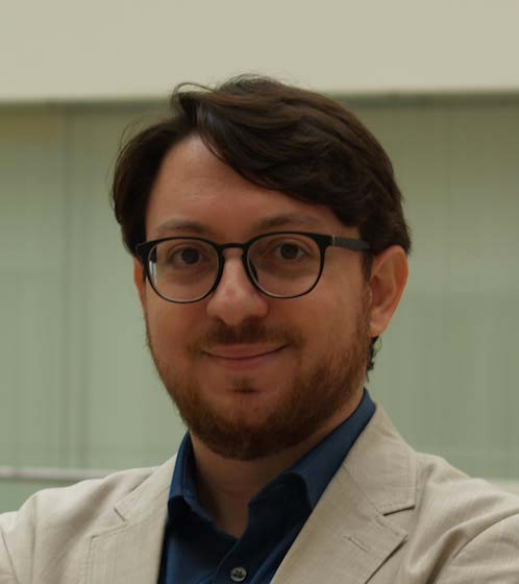}}]{Luca Carlone} is the Charles Stark Draper Assistant Professor in the Department of Aeronautics
	and Astronautics at the Massachusetts Institute of
	Technology, and a Principal Investigator in the Laboratory for Information \& Decision Systems (LIDS).
	He has obtained a B.S.~degree in mechatronics
	from the Polytechnic University of Turin, Italy, in
	2006; an S.M.~degree in mechatronics from the
	Polytechnic University of Turin, Italy, in 2008; an
	S.M.~degree in automation engineering from the
	Polytechnic University of Milan, Italy, in 2008; and
	a Ph.D.~degree in robotics also the Polytechnic University of Turin in 2012.
	He joined LIDS as a postdoctoral associate (2015) and later as a Research
	Scientist (2016), after spending two years as a postdoctoral fellow at the
	Georgia Institute of Technology (2013-2015). His research interests include
	nonlinear estimation, numerical and distributed optimization, and probabilistic
	inference, applied to sensing, perception, and decision-making in single and
	multi-robot systems. His work includes seminal results on certifiably correct
	algorithms for localization and mapping, as well as approaches for visual inertial navigation and distributed mapping. He is a recipient of the 2017
	Transactions on Robotics King-Sun Fu Memorial Best Paper Award, the
	Best Paper award at WAFR 2016, the Best Student Paper award at the 2018
	Symposium on VLSI Circuits, and was best paper finalist at RSS 2015 and ICRA 2020.
\end{biography}

\vskip -2\baselineskip plus -1fil
\begin{biography}[{\includegraphics[width=1in,height=1.25in,clip,keepaspectratio]{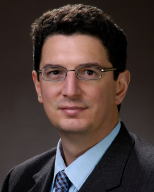}}]{George J.~Pappas} (S'90-M'91-SM'04-F'09) received the Ph.D.~degree in electrical engineering and computer sciences from the University of California, Berkeley, CA, USA, in  1998. He is currently the Joseph Moore Professor and Chair of the Department of Electrical and Systems Engineering, University of Pennsylvania, Philadelphia, PA, USA. He  also holds a secondary appointment with the Department of Computer and Information Sciences and the Department of Mechanical Engineering and Applied Mechanics. He is a Member of the GRASP Lab and the PRECISE Center. He had previously served as the Deputy Dean for Research with the School of Engineering and Applied Science. His research interests include control theory and, in particular, hybrid systems, embedded systems, cyberphysical systems, and hierarchical and distributed control systems, with applications to unmanned aerial vehicles, distributed robotics, green buildings, and biomolecular networks. Dr. Pappas has received various awards, such as the Antonio Ruberti Young Researcher Prize, the George S. Axelby Award, the Hugo Schuck Best Paper Award, the George H. Heilmeier Award, the National Science Foundation PECASE award and numerous best student papers awards.
\end{biography}

\vskip -2\baselineskip plus -1fil
\begin{biography}[{\includegraphics[width=1in,height=1.25in,clip,keepaspectratio]{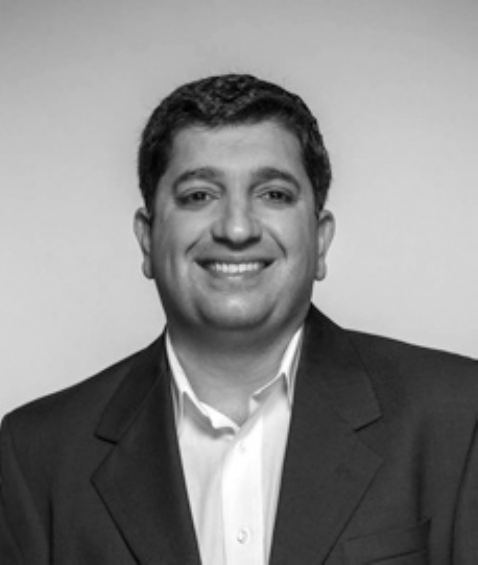}}] {Ali Jadbabaie} (S'99-M'08-SM'13-F'15) is the JR East Professor of Engineering and Associate Director of the Institute for Data, Systems and Society at MIT, where he is also on the faculty of the department of civil and environmental engineering and a principal investigator in the Laboratory for Information and Decision Systems (LIDS). He is the director of the Sociotechnical Systems Research Center, one of MIT's 13 laboratories. He received his Bachelors (with high honors) from Sharif University of Technology in Tehran, Iran, a Masters degree in electrical and computer engineering from the University of New Mexico, and his Ph.D.~in control and dynamical systems from the California Institute of Technology. He was a postdoctoral scholar at Yale University before joining the faculty at Penn in July 2002. Prior to joining MIT faculty, he was the Alfred Fitler Moore a Professor of Network Science and held secondary appointments in computer and information science and operations, information and decisions in the Wharton School. He was the inaugural editor-in-chief of IEEE Transactions on Network Science and Engineering, a new interdisciplinary journal sponsored by several IEEE societies. He is a recipient of a National Science Foundation Career Award, an Office of Naval Research Young Investigator Award, the O. Hugo Schuck Best Paper Award from the American Automatic Control Council, and the George S. Axelby Best Paper Award from the IEEE Control Systems Society. His students have been winners and finalists of student best paper awards at various ACC and CDC conferences. He is an IEEE fellow and a recipient of the Vannevar Bush Fellowship from the office of Secretary of Defense. His current research interests include the interplay of dynamic systems and networks with specific emphasis on multi-agent coordination and control, distributed optimization, network science, and network economics.
\end{biography}
}

\end{document}